\definecolor{myred}{rgb}{0.75,0,0}
\definecolor{mygreen}{rgb}{0,0.5,0}
\definecolor{myblue}{rgb}{0,0,0.65}
\title[Parity sheaves and moment graphs]{Parity sheaves, moment graphs and the 
$p$-smooth locus of Schubert varieties}
\author{Peter Fiebig}
\author{Geordie Williamson}
\newcommand{\CB}{{\mathcal B}}
\newcommand{\CC}{{\mathcal C}}
\newcommand{\CE}{{\mathcal E}}
\newcommand{\CF}{{\mathcal F}}
\newcommand{\CG}{{\mathcal G}}
\newcommand{\CI}{{\mathcal I}}
\newcommand{\CJ}{{\mathcal J}}
\newcommand{\CK}{{\mathcal K}}
\newcommand{\CP}{{\mathcal P}}
\newcommand{\CQ}{{\mathcal Q}}
\newcommand{\CR}{{\mathcal R}}
\newcommand{\CV}{{\mathcal V}}
\newcommand{\CZ}{{\mathcal Z}}
\newcommand{\SB}{{\mathscr B}}
\newcommand{\SF}{{\mathscr F}}
\newcommand{\SM}{{\mathscr M}}
\newcommand{\SN}{{\mathscr N}}
\newcommand{\fh}{{{\mathfrak h}}}
\newcommand{\fg}{{{\mathfrak g}}}
\newcommand{\fn}{{{\mathfrak n}}}
\newcommand{\DC}{{\mathbb C}}
\newcommand{\DP}{{\mathbb P}}
\newcommand{\DZ}{{\mathbb Z}}
\newcommand{\DQ}{{\mathbb Q}}
\newcommand{\DF}{{\mathbb F}}
\newcommand{\DD}{{\mathbb D}}
\newcommand{\DW}{{\mathbb W}}
\newcommand{\Hom}{{\operatorname{Hom}}}
\newcommand{\id}{{\operatorname{id}}}
\newcommand{\res}{{\operatorname{res}}}
\newcommand{\Gr}{{\operatorname{{\mathcal{G}{r}}}}}
\newcommand{\Fl}{{\operatorname{{\mathcal{F}{l}}}}}
\newcommand{\catmod}{{\operatorname{-mod}}}
\newcommand{\inj}{{\hookrightarrow}}
\newcommand{\ol}{\overline}
\newcommand{\Stab}{{\operatorname{Stab}}}
\newcommand{\pt}{{pt}}
\newcommand{\linie}{{\,\text{---\!\!\!---}\,}}
\newcommand{\llinie}{{\text{---\!\!\!---\!\!\!---}}}
\newcommand{\For}{{\operatorname{For}}}
\newcommand{\THypgr}{{\operatorname{\mathbb{H}_T^\bullet}}}
\newcommand{\GHypgr}{{\operatorname{\mathbb{H}_G^\bullet}}}
\newcommand{\THyp}{\operatorname{\mathbb{H}}_T}
\newcommand{\GHyp}{\operatorname{\mathbb{H}}_G}
\newcommand{\ul}{\underline}
\newcommand{\comment}[1]{}
  \newtheorem{theorem}{Theorem}[section]
  \newtheorem{definition}[theorem]{Definition}
  \newtheorem{lemma}[theorem]{Lemma}
  \newtheorem{proposition}[theorem]{Proposition}
  \newtheorem{corollary}[theorem]{Corollary}
  \theoremstyle{remark}
  \newtheorem{remark}{Remark}
\newcommand{\excise}[1]{}
  \def\varle{\leqslant}
\newcommand{\IC}{\mathbf{IC}}
\DeclareMathOperator{\codim}{codim}
\DeclareMathOperator{\Rep}{Rep}
\begin{document}
\begin{abstract}
We show that, with coefficients in a field or complete local
principal ideal domain $k$, the Braden-MacPherson algorithm computes
the stalks of parity sheaves with coefficients in $k$. As a
consequence we deduce that the Braden-MacPherson algorithm may be used
to calculate the characters of tilting modules for algebraic groups
and show that the $p$-smooth locus of a (Kac-Moody) Schubert variety
coincides with the rationally smooth locus, if the underlying Bruhat
graph satisfies a GKM-condition.
\end{abstract}
\maketitle
\tableofcontents

\section{Introduction}

In Lie theory, one of the most successful methods to calculate
representation theoretic data (such as character formulae, decomposition
numbers or Jordan--H\"older multiplicities) is to find a geometric or topological
interpretation of the problem. In many  examples one obtains representation theoretic information 
from the stalks of intersection cohomology complexes on an associated algebraic variety (for example the flag variety,
the nilpotent cone, or the group itself).

In the most successful applications of this approach (the
Kazhdan-Lusztig conjecture, canonical bases, character sheaves \dots ) 
the representation theoretic objects under consideration are assumed
to be defined over a field of characteristic $0$. In this case the
decomposition theorem often allows one to recursively calculate the
stalks of intersection cohomology complexes, hence solving (or at
least providing a combinatorial algorithm to solve) the representation theoretic problem.

However, recently a
number of authors have pointed out that geometry also has something to
say about modular representation theory (see \cite{JMW1} for a
survey). In this article we are motivated by the following two
examples of this phenomenon:
\begin{itemize}
\item For a ring $k$, the geometric Satake equivalence (cf.  \cite{MV})  provides an equivalence of
 tensor categories between equivariant perverse $k$-sheaves on the
  affine Grassmannian and rational representations of the
 Langlands dual group scheme over $k$.
\item In \cite{Fie2} and
  \cite{Fie3} a certain category of sheaves of $k$-vector spaces on an affine flag variety was related to representations of the $k$-Lie algebra associated to the Langlands dual root system. Here $k$ is assume to be a field whose  characteristic is required to be at least the associated Coxeter number. The relation is then used to give a new proof of Lusztig's
  conjecture on the simple rational characters for reductive groups in almost all characteristics.
\end{itemize}

In \cite{JMW2} (motivated by ideas of Soergel \cite{Soe} together
with a desire to better understand such
relationships) a new class of sheaves, the ``parity sheaves'', was introduced. These
are certain constructible sheaves on a stratified algebraic variety,
which satisfy a parity vanishing condition with respect to stalks and
costalks. It was shown that, under some additional assumptions, the
indecomposable parity sheaves are parametrized in the same way as the
intersection cohomology complexes. If the coefficients are of
characteristic $0$ the decomposition theorem often implies that the
indecomposable parity sheaves are isomorphic to intersection
cohomology complexes (up to a shift). 

In positive characteristics this
needs no longer be true.  However, with coefficients of positive
characteristic parity sheaves are often easier to work with than
intersection cohomology complexes. Moreover, for some representation theoretic applications they may even form their natural
replacement. For example,
\begin{itemize}
\item the category considered in \cite{Fie2, Fie3} turns out to be the
  category of parity sheaves,
\item under the geometric Satake equivalence (and under some mild and
  explicit assumptions on the characteristic of $k$) the parity
sheaves correspond to tilting modules for the Langlands dual group (cf. \cite{JMW3}).
\end{itemize}
In the above results, fundamental representation theoretic data is
encoded in the stalks of the indecomposable parity sheaves. It is
therefore an important problem to find an algorithm for their calculation.

For an appropriately stratified complex algebraic variety $X$ with torus action Braden and
MacPherson \cite{BrM} describe an algorithm for calculating the stalks
of intersection cohomology complexes with coefficients in a field of
characteristic $0$ (using localisation techniques in equivariant
cohomology developed by Goresky, Kottwitz and MacPherson \cite{GKM}).  The torus action has, by assumption, only finitely many fixed points and
one-dimensional orbits. The structure of the one-skeleton of the
torus action can be
encoded in the ``moment graph'' of the variety:
\begin{itemize}
\item the vertices and edges are given by the torus fixed points and
  one-dimensional orbits respectively, with a one-dimensional orbit
  incident to those fixed points in its closure,
\item each edge is labelled by a character of the torus
  determining an isomorphism of the orbit with $\mathbb{C}^*$ (this is defined only up to a sign).
\end{itemize}

Braden and MacPherson then describe an algorithm (using only
commutative algebra) to produce a ``sheaf'' on the
moment graph, and show that its stalks agree with those of the equivariant
intersection cohomology complex. Thus the (a priori extremely
difficult) computation of the stalks of the intersection cohomology
complex may (in principle) be carried out in an elementary way.

Now, the Braden--MacPherson algorithm makes sense with
coefficients in an arbitrary field $k$, or even in a local ring. However, simple examples show
that it does not compute
the stalks of intersection cohomology complexes when the
coefficients are not of characteristic $0$. The central result of
this paper is the following:

\begin{theorem}  Suppose that the pair $(X,k)$ satisfies  the GKM-condition (cf.~ Section \ref{subsec-locpartII}). 
Then the Braden-MacPherson algorithm computes the stalks of indecomposable parity sheaves.    
\end{theorem}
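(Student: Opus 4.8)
The plan is to compare both objects as sheaves on the moment graph $\mathcal{G}$ of $X$ by passing through equivariant hypercohomology. For a $T$-variety satisfying the GKM-condition, the functor $\mathbb{H}_T^\bullet(X,-)$, together with the restriction maps to the torus-fixed points and to the closures of the one-dimensional orbits, sends a $T$-equivariant complex to a sheaf on $\mathcal{G}$ in the Braden--MacPherson sense, and the GKM-condition is exactly what ensures that on the subcategory of parity complexes this assignment loses no information (it is fully faithful and its image is reconstructed from the one-skeleton). First I would recall, following Braden--MacPherson and the existing literature on moment-graph sheaves, the characterisation of the Braden--MacPherson sheaf $\mathcal{B}(w)$ on $\mathcal{G}_{\leq w}$: it is the unique indecomposable sheaf on $\mathcal{G}$ that is supported on $\{x \leq w\}$, carries a copy of $S$ in the appropriate degree at the vertex $w$, has graded free stalks, and is flabby, in the sense that its sections over any open (upwardly closed) set of vertices surject onto every stalk; moreover the algorithm's output at a vertex $x$ is precisely this stalk $\mathcal{B}(w)^x$, from which the topological stalk of the parity sheaf is obtained by $k \otimes_S -$ via equivariant formality.

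Next I would take the indecomposable $T$-equivariant parity complex $\mathcal{E}(w)$ supported on $\overline{X_w}$ and show that $M := \mathbb{H}_T^\bullet(X,\mathcal{E}(w))$, regarded as a sheaf on $\mathcal{G}$, has the four properties above. Support on $\{x \leq w\}$ and the normalisation at $w$ follow from $i_w^*\mathcal{E}(w)$ being the constant sheaf on $X_w$ up to shift. The crux is to convert the parity vanishing of the stalks and costalks of $\mathcal{E}(w)$ into freeness of the stalks $M^x$ and into flabbiness. For freeness, $M^x = \mathbb{H}_T^\bullet(i_x^*\mathcal{E}(w))$ is the equivariant hypercohomology of a parity complex, hence a graded free $S$-module by the standard degeneration argument (stalks sit in a single parity, so the spectral sequence computing $\mathbb{H}_T^\bullet$ from ordinary cohomology collapses). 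For flabbiness, an open set of vertices corresponds to a $T$-stable open $U \subseteq \overline{X_w}$ obtained from a smaller open $U'$ by adding a single closed stratum $X_x$; the attaching triangle relating $\mathbb{H}_T^\bullet(U,\mathcal{E}(w))$, $\mathbb{H}_T^\bullet(U',\mathcal{E}(w))$ and the costalk term $\mathbb{H}_T^\bullet(i_x^!\mathcal{E}(w))$ splits because stalks and costalks of $\mathcal{E}(w)$ lie in opposite parities, so the restriction $\mathbb{H}_T^\bullet(U,\mathcal{E}(w)) \to M^x$ is surjective; inducting over strata gives flabbiness.

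Finally, since $\mathcal{E}(w)$ is indecomposable and $\mathbb{H}_T$ is fully faithful on parity complexes (and since $k$ is a field or a complete local principal ideal domain, so that Krull--Schmidt holds for sheaves on $\mathcal{G}$), the sheaf $M$ is indecomposable, and the characterisation recalled above forces $M \cong \mathcal{B}(w)$. Taking stalks and base-changing along $S \to k$ yields $\mathbb{H}^\bullet(i_x^*\mathcal{E}(w)) \cong k \otimes_S \mathcal{B}(w)^x$, which is the output of the Braden--MacPherson algorithm at $x$, as desired. I expect the principal obstacle to be the second paragraph: arranging for the GKM-condition to simultaneously make $\mathbb{H}_T$ behave well and make parity vanishing genuinely force both the freeness of the stalks and the splitting of the attaching triangles, neither of which is automatic in positive characteristic. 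The delicate point is controlling the $S$-module structure of the costalks $\mathbb{H}_T^\bullet(i_x^!\mathcal{E}(w))$ against the edge labels $\alpha_E$, which is exactly what the GKM-condition is designed to handle.
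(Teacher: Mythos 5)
Your overall strategy matches the paper's: define a localisation functor (the paper's $\DW$) sending a $T$-equivariant complex to a moment-graph sheaf via the local equivariant hypercohomologies, show it sends parity sheaves to Braden--MacPherson sheaves, use full faithfulness to transfer indecomposability, and invoke the classification of indecomposable Braden--MacPherson sheaves. You also correctly identify that the GKM-condition enters through a localisation theorem (the paper's Theorem~\ref{theorem-loc2}) identifying $\THypgr(\CP)$ with $\Gamma(\DW(\CP))$.

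However, there is a genuine gap in your second paragraph, exactly where you anticipated trouble but misdiagnosed it. You state the Braden--MacPherson characterisation as requiring that ``sections over any open set of vertices surject onto every stalk,'' and you claim to establish this from the attaching triangle. But that triangle only gives what the paper calls condition~(3): surjectivity of the restriction $\Gamma(\CJ,\DW(\CP)) \to \Gamma(\CJ',\DW(\CP))$ from larger opens to smaller opens. (Concretely, for $\CJ = \CJ' \sqcup \{x\}$ the parity vanishing kills the boundary maps in the long exact sequence, giving surjectivity of $\THypgr(\CP_{\CJ}) \to \THypgr(\CP_{\CJ'})$.) It does \emph{not} give surjectivity of $\Gamma(\DW(\CP)) \to \DW(\CP)^x = \THypgr(\CP_x)$, which is the paper's condition~(4) of Definition~\ref{def-BMPsheaves}. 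These are genuinely different: a sheaf satisfying (1)--(3) but not (4) can have a stalk $\SM^x$ strictly larger than the projective cover of $\SM^{\delta x}$, so without (4) the classification theorem does not pin $\DW(\CP(\lambda))$ down to $\SB(\lambda)$. The paper establishes (4) by an entirely separate argument (Theorem~\ref{thm:parityexists} via Propositions~\ref{prop:BB} and~\ref{prop:BBres}): one realises $\CP(\lambda)$ as a direct summand of $\pi_{\lambda*}\ul{k}_{\widetilde{X_\lambda}}$ for a Bott--Samelson-type resolution $\pi_\lambda$, computes the restriction map on equivariant cohomology using the Bia\l ynicki-Birula decomposition of $\widetilde{X_\lambda}$ and of the fibre $\pi_\lambda^{-1}(x_\mu)$, and shows it is a split surjection by self-duality. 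Your proposal contains no substitute for this step. As a smaller point, your phrase ``stalks and costalks of $\mathcal{E}(w)$ lie in opposite parities'' should read that they lie in the \emph{same} parity; it is the degree shift by one in the boundary map of the long exact sequence that then forces the vanishing.
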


In the theorem, $k$ denotes a complete local principal ideal
domain. If $k$ is a field, then the GKM-condition may be stated
simply: one requires that, for all pairs of one-dimensional orbits
having a common torus fixed point in their closure, the corresponding  characters
 do not become linearly dependent modulo
$k$. This condition can easily be read off the associated
moment graph.

 In the course of the proof of the above result we provide a
 version of localisation theorem of \cite{GKM} with coefficients in a ring, i.e. we show that the hypercohomologies of certain equivariant sheaves on $X$ are given by the global sections of  associated moment graph sheaves (see Theorem \ref{theorem-loc2}). For
complete local rings we then show that the Braden--MacPherson algorithm yields the objects associated to parity sheaves (see Theorem \ref{thm:Wparity=BM}). In contrast to the proof of Braden--MacPherson, our arguments are more elementary, as we do not need the theory of mixed Hodge modules. As in  characteristic $0$ the decomposition theorem implies that the parity sheaves are intersection cohomology complexes up to a shift, we obtain a new proof of their result.

Applying the above theorem to
the affine Grassmannian and using the Satake equivalence, we obtain:

\begin{theorem} Suppose that $p>h+1$, where $h$ denotes the Coxeter number of our datum. On the moment graph of the affine Grassmannian and  with coefficients in the ring of $p$-adic integers, the Braden--MacPherson algorithm calculates the characters of tilting modules of the Langlands dual group over $\ol\DF_p$.
\end{theorem}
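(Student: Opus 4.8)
The plan is to deduce this from the central theorem of this paper, fed into the geometric Satake equivalence with coefficients in a ring \cite{MV} and the identification of parity complexes on the affine Grassmannian with tilting modules for the Langlands dual group \cite{JMW3}. Fix the datum, write $\Gr=\Gr_G$ and let $T$ be the maximal torus. For a dominant coweight $\lambda$ the closed stratum $\ol{\Gr^\lambda}$ is a projective variety on which $T$ acts with finitely many fixed points (the coweights $\mu\le\lambda$) and finitely many one-dimensional orbits, so it carries a finite moment graph; its edges are labelled by roots of the underlying affine root system.

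The first, and substantive, step is to verify that the pair $(\Gr,\DZ_p)$ satisfies the GKM-condition of Section~\ref{subsec-locpartII} once $p>h+1$: concretely, one must show that no two edge-labels meeting at a common vertex become proportional after reduction modulo the maximal ideal $(p)$. Because the coefficients of roots with respect to the simple roots are controlled by the Coxeter number $h$, this certainly holds for $p$ large; a careful analysis — the crude height bound alone does not suffice to exclude accidental proportionalities modulo small primes — shows that $p>h+1$ is enough for the GKM-condition, and a fortiori for the hypotheses of \cite{JMW3}. Granting the GKM-condition, Theorem~\ref{thm:Wparity=BM} (together with the localisation theorem, Theorem~\ref{theorem-loc2}) shows that the Braden--MacPherson algorithm run on the moment graph of $\ol{\Gr^\lambda}$ over $\DZ_p$ produces a moment graph sheaf whose graded stalk ranks coincide with the graded ranks of the stalks of the indecomposable parity complex $\CE_\lambda$ on $\ol{\Gr^\lambda}$ with $\DZ_p$-coefficients. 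Since parity complexes over a complete local principal ideal domain have free stalks and base change well \cite{JMW2}, reduction modulo $p$ yields the indecomposable parity complex over $\ol\DF_p$ with the same graded stalk ranks; equivalently the algorithm may be run over $\ol\DF_p$ directly without changing the output.

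It remains to turn stalk ranks into characters. In the Grothendieck group one has $[\CE_\lambda]=\sum_{\mu\le\lambda}d_{\mu,\lambda}\,[\nabla_\mu]$, where $\nabla_\mu$ is the costandard complex attached to the affine Schubert cell $\Gr^\mu$ and the $d_{\mu,\lambda}$, evaluated at the appropriate power of the grading variable, are precisely the stalk ranks delivered by the algorithm. Geometric Satake over $\ol\DF_p$ now sends $\CE_\lambda$ to the indecomposable tilting module $T(\lambda)$ for the Langlands dual group over $\ol\DF_p$ — this is where \cite{JMW3} is used — and sends $\nabla_\mu$, after the standard cohomological shift in the Satake weight functors, to the dual Weyl module of highest weight $\mu$; as the character of a dual Weyl module is the $p$-independent Weyl character $\chi(\mu)$, one obtains $\operatorname{ch} T(\lambda)=\sum_{\mu\le\lambda}d_{\mu,\lambda}\,\chi(\mu)$, which is the assertion. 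I expect the main obstacle to be the GKM verification — establishing that $p>h+1$ genuinely rules out every collision of edge-labels modulo $p$ — together with the care needed to reconcile the perverse versus parity normalisations, the shift in the weight functors, and the passage from $\DZ_p$ to $\ol\DF_p$, so that the algorithm's output is the tilting character itself and not a twist of it.
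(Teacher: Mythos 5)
Your overall strategy (use Theorem~\ref{thm:Wparity=BM} to identify the output of the Braden--MacPherson algorithm with $\DW$ of the parity sheaf, reduce from $\DZ_p$ to $\ol\DF_p$, then invoke the geometric Satake/tilting correspondence of \cite{JMW3}) is the paper's strategy, and the final bookkeeping with costandard objects and Weyl characters is a legitimate unpacking of the \cite{JMW3} result that stalk ranks give weight-space dimensions of tilting modules.

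However, there is a genuine error in what you call the "first, and substantive, step". You assert that the GKM condition for $(\Gr,\DZ_p)$ means that no two edge labels at a common vertex become proportional after reduction modulo $(p)$, and that $p>h+1$ is needed to ensure this. That is not what GKM over $\DZ_p$ says, and the statement you propose to prove is in fact \emph{false} for the affine Grassmannian no matter how large $p$ is: the real affine roots include $\alpha$ and $\alpha+p\delta$, which are labels of edges sharing a vertex and which become equal modulo $p$. This is exactly the obstruction the paper highlights — the affine moment graph is GKM over a field if and only if the field has characteristic $0$ — and it is precisely why one passes to $\DZ_p$ rather than $\DF_p$. The GKM condition over a ring $k$ (see the definition in Section~\ref{subsec:ksm} and conditions (A4a), (A4b)) requires $k\alpha_E\cap k\alpha_{E'}=0$ inside $Y\otimes_\DZ k$; for $k=\DZ_p$, which is a characteristic-$0$ domain, this is equivalent to $\DQ$-linear independence of $\alpha_E,\alpha_{E'}$, and hence holds automatically for distinct real roots, with no constraint on $p$. (The divisibility condition (A4b) is handled by working with the adjoint torus, as in the paper.) The bound $p>h+1$ in the theorem has nothing to do with GKM: it comes entirely from \cite{JMW3}, which needs it to identify parity sheaves with tilting modules under geometric Satake. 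Your remark that the GKM verification implies the hypotheses of \cite{JMW3} "a fortiori" inverts the actual logic — these are independent inputs, and the GKM input over $\DZ_p$ is free.

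With that correction the rest of your argument aligns with the paper's: GKM over $\DZ_p$ is automatic; Theorem~\ref{thm:Wparity=BM} matches $\SB(\lambda)$ with $\DW(\CP(\lambda))$ over $\DZ_p$; by \cite[Proposition 2.41]{JMW2} the graded stalk ranks of parity sheaves depend only on the residue characteristic, which lets one pass to $\ol\DF_p$; and \cite{JMW3} (for $p>h+1$) identifies those ranks with tilting weight-space dimensions.
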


The moment graph of the affine Grassmannian is GKM for a
field $k$ if and only if $k$ is of characteristic $0$. We avoid this complication by using the $p$-adic integers in the above theorem.

We apply the  multiplicity one result of \cite{Fie06} to obtain a description of the $p$-smooth locus of Schubert varieties. Recall that an $n$-dimensional algebraic  variety $X$ is $p$-smooth if for all $x\in X$ one has an isomorphism of graded vector spaces
$$
H^\bullet(X,X\setminus\{x\},\DF_p)\cong H^\bullet(\DC^n,\DC^n\setminus\{0\},\DF_p).
$$
The $p$-smooth locus of $X$ is the largest open $p$-smooth
subvariety. One similarly defines rationally smooth, and the rationally
smooth locus by replacing $\DF_p$ by $\DQ$ above. 
If $X$ is rationally (resp. $p$-) smooth it satisfies Poincar\'e duality with
rational (resp. $\DF_p$-) coefficients. Here is our result:
 
\begin{theorem} \label{thm:intropsmooth}
Let $\CG$ be the  moment graph of a (Kac-Moody) Schubert variety $X$ and suppose that $(\CG,\DF_p)$ is a  
  GKM-pair. Then the $p$-smooth locus of $X$ coincides with its
  rationally smooth locus.
\end{theorem}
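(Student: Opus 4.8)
The plan is to reduce both notions of smoothness at a point to one and the same rank-one condition on stalks of parity sheaves, to compute those stalks by running the Braden--MacPherson algorithm on $\CG$, and then to invoke the multiplicity-one result of \cite{Fie06}, which shows that this condition is independent of the coefficient field. Write $X=\overline{X_w}$, so that $\CG$ is (up to harmless identifications) the full subgraph of the Bruhat graph on the vertices $z\le w$. Since the Schubert cells $X_z$ are the orbits of a group acting on $X$, and both $p$-smoothness and rational smoothness are preserved by these automorphisms, the two loci in question are unions of cells; so it suffices to prove, for each $y\le w$, that $X$ is $p$-smooth along $X_y$ (that is, at every point of the stratum $X_y$) if and only if it is rationally smooth along $X_y$.

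I first reformulate both conditions. By definition $X$ is $p$-smooth along $X_y$ if and only if $\underline{\DF_p}_X[\ell(w)]$ is Verdier self-dual in a neighbourhood of $X_y$; self-duality then forces this complex to be a parity complex there (its stalks lie in a single degree, hence by duality so do its costalks), so it agrees near $X_y$ with the indecomposable parity sheaf $\CE_{\DF_p}(w)$. Conversely, assume $\CE_{\DF_p}(w)_z$ is free of graded rank one for every $z\in[y,w]$. Since $\CE_{\DF_p}(w)$ is a direct summand of the (shifted) direct image of the constant sheaf along a Bott--Samelson resolution, $\mathcal{H}^0(i_z^*\CE_{\DF_p}(w))=\DF_p$ for all $z\le w$; the rank-one hypothesis therefore pins the stalk of $\CE_{\DF_p}(w)$ at each point near $X_y$ to be $\DF_p$ in degree $0$, so $\CE_{\DF_p}(w)$ is concentrated in degree $0$ near $X_y$, its unique cohomology sheaf is a direct summand of the constant sheaf $\underline{\DF_p}_X$ (again by the Bott--Samelson description), hence equals it. Thus $\CE_{\DF_p}(w)=\underline{\DF_p}_X$ near $X_y$, and since $\CE_{\DF_p}(w)$ is self-dual up to shift, $\underline{\DF_p}_X[\ell(w)]$ is self-dual near $X_y$, i.e.\ $X$ is $p$-smooth along $X_y$. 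Over $\DQ$ the same reformulation holds verbatim with $\CE_{\DQ}(w)$ in place of $\CE_{\DF_p}(w)$; since in characteristic zero the decomposition theorem identifies $\CE_{\DQ}(w)$ with $\IC(X,\DQ)$ up to shift, this is moreover the classical statement that rational smoothness along $X_y$ is equivalent to the triviality of the local intersection cohomology over all of $[y,w]$. Finally, ``$p$-smooth $\Rightarrow$ rationally smooth'' needs no hypothesis ($p$-smoothness at a point makes the integral local cohomology there $\DZ$ in degree $2\dim X$ up to torsion prime to $p$, and extending scalars to $\DQ$ gives rational smoothness), so only the reverse implication uses the GKM hypothesis; by the above it amounts to: rank one of every stalk of $\CE_{\DQ}(w)$ over $[y,w]$ implies rank one of every stalk of $\CE_{\DF_p}(w)$ over $[y,w]$.

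Now the moment graph enters. The graph $\CG$ is GKM over every field of characteristic zero (characteristic zero cannot produce a linear dependence among edge labels) and, by assumption, over $\DF_p$; hence the main theorem of this paper applies over both $\DQ$ and $\DF_p$ and identifies the stalks of $\CE_{\DQ}(w)$, resp.\ $\CE_{\DF_p}(w)$, with those of the Braden--MacPherson sheaf $\CB^{\DQ}(w)$, resp.\ $\CB^{\DF_p}(w)$, on $\CG$ (over $\DQ$ one may alternatively cite \cite{BrM}). So it suffices to show that if every stalk of $\CB^{\DQ}(w)$ over $[y,w]$ is free of graded rank one, then so is every stalk of $\CB^{\DF_p}(w)$ over $[y,w]$ --- and this is exactly the multiplicity-one result of \cite{Fie06}: in the multiplicity-one situation the Braden--MacPherson construction on $[y,w]$ never requires more than one generator in any degree at any vertex, so every module occurring in the recursion is free of the expected rank, no multiplicity can jump under change of coefficients, and the construction gives the same answer over $\DQ$ and over $\DF_p$. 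Hence $\CB^{\DF_p}(w)_z$ is free of graded rank one for all $z\in[y,w]$, and $X$ is $p$-smooth along $X_y$.

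The step that does the real work is the multiplicity-one result of \cite{Fie06}; it is there that the comparison between characteristic zero and characteristic $p$ genuinely takes place, via the observation that a ``thin'' Braden--MacPherson sheaf is insensitive to the coefficients, and the main thing to verify is that its hypotheses cover the Braden--MacPherson sheaves attached to the moment graph of a Kac--Moody Schubert variety. The other point needing care is the $\DF_p$-side reformulation of the second paragraph --- in particular that the rank-one condition is genuinely needed on the whole interval $[y,w]$ (not merely at $X_y$), and the bookkeeping of parity normalisations that converts rank-one stalks of $\CE_{\DF_p}(w)$ into Verdier self-duality of the constant sheaf.
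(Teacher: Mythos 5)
Your proposal is correct and follows essentially the same strategy as the paper: reduce $k$-smoothness stratum-by-stratum to a rank-one condition on stalks of the indecomposable parity sheaf (via Propositions \ref{prop:psmoothconstant} and \ref{prop:icstalks+psmooth}), translate this via Theorem \ref{thm:Wparity=BM} into a rank-one condition on the Braden--MacPherson sheaf $\SB(w)$, and conclude by the multiplicity-one theorem of \cite{Fie06}, whose criterion for $\Omega_k(\CG)$ is purely combinatorial and hence gives the same answer over $\DQ$ and $\DF_p$ once $(\CG,\DF_p)$ is GKM. The paper packages this as Theorem \ref{thm:psmoothmaintheorem} together with the proposition identifying the $k$-smooth locus of $X$ with $\Omega_k(\CG)$; you hit the same points in slightly different order.

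One small flag: where you invoke Theorem \ref{thm:mgsmoothlocus} you should note that it requires the global sections $\Gamma(\SB(w))$ to be self-dual, which in the paper is supplied by Theorem \ref{thm:parityexists}(3) via the Bott--Samelson resolution; you allude to this ("the main thing to verify is that its hypotheses cover the Braden--MacPherson sheaves…") but it is worth making explicit that this is the nontrivial hypothesis being checked. Also, the step asserting that a rank-one stalk is necessarily $\DF_p$ in degree $0$ (rather than some shift of it) deserves a word of justification — the paper handles this implicitly by working with the condition $\SB(w)^{\mu}\cong S_k$ and the normalization of the parity sheaf; your appeal to the Bott--Samelson pushforward to get $\mathcal{H}^0\neq 0$ of the stalk is not immediate for an arbitrary direct summand, so it is cleaner to argue via the support/costalk conditions or simply via the BM construction starting from $S_k$ at $w$.
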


In the finite dimensional case, the GKM-condition is always satisfied if $p\ne 2$ and if, in addition, $p\ne 3$ in  $G_2$.
This answers a (stronger version of) a question of Soergel (cf. 
\cite{Soe}). In fact, we prove the above theorem for a larger class of
varieties with an appropriate torus action for fields $k$ that satisfy the GKM-condition.

The smooth and rationally smooth locus of Schubert varieties has been
the subject of much investigation by a number of authors. See for example
\cite{Carrell1}, \cite{Kumar1}, \cite{Dyer1}, \cite{Dyer2} and \cite{Ar3}. 
It a result known as Peterson's theorem that the smooth and rationally smooth locus
agree in simply-laced type, which immediately implies the above
theorem. However, there are examples in non-simply-laced
types of small rank where the $2$-smooth and $3$-smooth locus
do not agree with the rationally smooth locus.

Lastly let us remark that results of this paper (in particular
Section \ref{subsec:freeness}) are used in an
essential way in the paper \cite{JW}, which shows that that Kumar's
criterion for the rational smoothness of Schubert varieties can be
extended to provide a criterion for $p$-smoothness. In particular, the
main result of \cite{JW} provides a means to determine the $p$-smooth locus when the
underlying moment graph is not GKM, complementing Theorem
\ref{thm:intropsmooth}. On may also use Theorem
\ref{thm:intropsmooth} together with the results of \cite{JW} to
obtain a novel proof of Peterson's theorem.

\subsection{Acknowledgements} We would like to thank Daniel Juteau and
Olaf Schn\"urer for useful conversations and Michel Brion for useful
correspondence. P.F. gratefully acknowledges the support of the Landesstiftung Baden--W\"urttemberg as well as the DFG-Schwerpunkt 1388 `Representation Theory''. Both authors gratefully acknowledge the support of the Newton Institute in Cambridge, where parts of this paper were written.

\section{Equivariant sheaves}
In this section we recall the construction  of the bounded equivariant derived
category $D^b_G(X,k)$ that is associated to a topological group $G$, a
ring of coefficients $k$ and a $G$-space $X$.
To a suitable continuous
$G$-equivariant map $f\colon X\to Y$ one associates the push-forward functors
$$
f_\ast, f_!\colon D^b_G(X,k)\to D^b_G(Y,k)
$$
and the pull-back functors
$$
f^\ast,f^!\colon D^b_G(Y,k)\to D^b_G(X,k)
$$
satisfying a Grothendieck formalism. We then recall the equivariant cohomology  $\GHypgr(\CF)$ of $X$ with coefficients in  $\CF\in D^b_G(X,k)$ and, finally, the Mayer--Vietoris sequence
associated to an open $G$-stable covering $X=U\cup V$.

We will be mainly concerned with the following
situation: $G$ will either be a complex algebraic torus,
i.e.~ $G\cong(\DC^\times)^r$ for some $r>0$, endowed with its metric
topology, or its compact subtorus $(S^1)^r$. The
space $X$ will be a complex algebraic variety with an algebraic
$G$-action, and endowed with its metric topology. The main reference
for the following is \cite{BLu}.

\subsection{The equivariant derived category of a $G$-space}
We fix a topological group $G$. A $G$-space is a topological space
endowed with a continuous $G$-action. There always exists a
contractible $G$-space with a topologically free $G$-action. We fix
one of those and call it $EG$. For any $G$-space $X$ we can now define
the quotient $X_G:=X\times_G EG$ of $X\times EG$ by the diagonal
$G$-action. Then we have two maps

\centerline{
\xymatrix{
& X\times EG \ar[ld]_p\ar[rd]^q&\\
X&&X_G.
}
}
 \noindent
The map $q$ on the right is the canonical quotient map and $p$ is the
projection onto the first factor.

Now we fix a ring of coefficients $k$. For any topological space $Y$ 
we denote by $D(Y,k)$ the derived category of sheaves of $k$-modules
on $Y$. By $D^b(Y,k)$ we denote the full subcategory of objects with
bounded cohomology. For a continuous map $f\colon Y\to Y^\prime$ we
then have the push-forward functor $f_\ast\colon D(Y,k)\to
D(Y^\prime,k)$ and the pull-back functor $f^\ast\colon
D(Y^\prime,k)\to D(Y,k)$ (see \cite{Spalt}).

\begin{definition} The {\em equivariant derived category} of sheaves on $X$ with coefficients in $k$ is the full subcategory $D_G(X,k)$ of $D(X_G,k)$ that contains all sheaves $\CF$ for which there is a sheaf $\CF_X\in D(X,k)$ such that $q^\ast\CF\cong p^\ast\CF_X$.
 \end{definition}
We denote by $D^b_G(X,k)\subset D_G(X,k)$ the full subcategory of
objects with bounded cohomology, i.e.~ of
objects that are contained in
$D^b(X_G,k)$.

It turns out that the categories $D_G(X,k)$ and $D_G^b(X,k)$ are independent of the choice of $EG$.
Since $p$ is a trivial fibration with contractible fibre $EG$, the functor $p^\ast\colon D(X,k)\to D(X\times EG,k)$ is a full embedding.
We deduce that for $\CF\in D_G(X,k)$ the sheaf $\CF_X\in D(X,k)$ appearing in the definition above is unique up to unique isomorphism, so  the map $\CF\mapsto \CF_X$ even extends to a functor $\For\colon D_G(X,k) \to D(X,k)$.

\subsection{The equivariant functor formalism}

In order to ensure that all the functors that we introduce in the
following exist we assume that $X$ is a complex algebraic variety
endowed with its metric topology, and that $G$ is a Lie group acting
continuously on $X$.

If $f\colon X\to Y$ is a continuous $G$-equivariant map 
then we get an induced map
$f_G:=f\times_G \id\colon X_G\to Y_G$ and corresponding functors
$f_G^\ast$, $f_{G\ast}$, $f_G^!$ and $f_{G!}$ between the categories
$D^b(X_G,k)$ and $D^b(Y_G,k)$. (Some care is needed in the definition
of $f_G^!$ and $f_{G!}$ because $X_G$ and $Y_G$ are not locally
compact in general. In \cite{BLu} this problem is overcome by
considering $X_G$ as a direct limit of locally compact spaces.
It is also possible to prove the existence and basic properties of
$f_G^!$ in a relative setting, see \cite{SHS}.) It turns out that all
four functors induce functors between the subcategories $D^b_G(X,k)$
and $D^b_G(Y,k)$. By abuse of notation we denote these functors by
the symbols $f^\ast$, $f_\ast$, $f^!$ and $f_!$.

 For a $G$-stable subvariety $i\colon Y \inj X$ and a sheaf $\CF\in D_G^b(X,k)$ we define
 $$
 \CF_Y:=i^\ast \CF.
 $$
So $\CF_Y$ is an object in $D^b_G(Y,k)$.

\subsection{Equivariant cohomology}
The {\em equivariant cohomology $H_G^\bullet(X,k)$ of  $X$ with coefficients in $k$} is the (ordinary) cohomology of the space $X_G$, i.e.
$$
H_G^\bullet(X,k):=H^\bullet(X_G,k).
$$
In particular, the equivariant cohomology of a point is the cohomology
of the {\em classifying space}
$$
BG:=\pt_G=EG\times_G \pt=EG/G
$$
of $G$.

Now let $\CF\in D_G^b(X,k)$. The {\em equivariant cohomology 
$\GHyp^\bullet(\CF)$ of $X$ with coefficients in
$\CF$} is defined as follows. We denote by $\pi\colon X\to \pt$ the map to a
point. Then we have the object $\pi_\ast\CF\in D_G^b(\pt,k)\subset
D^b(BG,k)$ and we define
$$
\GHyp^\bullet(\CF):=H^\bullet(\pi_\ast \CF),
$$
where on the right we have the ordinary cohomology of $BG$ with coefficients in the sheaf
$\pi_\ast \CF$. This is naturally a graded module over $H_G^\bullet(\pt,k)=H^\bullet(BG,k)$,
so equivariant cohomology is a functor
$$
\GHyp^\bullet\colon D_G^b(X,k)\to H_G^\bullet(\pt,k)\catmod^\DZ.
$$
Here and in the following we denote by $A\catmod^\DZ$ the category of $\DZ$-graded modules over a $\DZ$-graded ring $A$. For a graded $A$-module $M=\bigoplus_{n\in\DZ}M_n$ and $l\in\DZ$ we denote by $M[l]$ the graded module obtained by a shift such that $M[l]_n=M_{l+n}$ for all $n\in\DZ$.

Let $i\colon Y\inj X$ be a locally closed $G$-stable subvariety and $\CF\in
D_G^b(X,k)$.
The adjunction morphism
$\id\to i_\ast i^\ast$ yields a morphism $\CF\to i_\ast i^\ast
\CF=i_\ast \CF_Y$. After applying equivariant cohomology this
yields a homomorphism
$$
\GHyp^\bullet(\CF)\to \GHyp^\bullet(i_\ast\CF_Y)=\GHyp^\bullet(\CF_Y)
$$
of $H_G^\bullet(\pt,k)$-modules. We call such a homomorphism a  {\em restriction homomorphism}.

\subsection{The Mayer--Vietoris sequence}
We will often make use of the equivariant Mayer--Vietoris
sequence. Note that the equivariant statement is a straightforward consequence of the
non-equivariant one (see, for example, \cite[2.6.28]{KS1}).

\begin{proposition} \label{prop:MaVe}
Let $X = U \cup V$ where $U, V \subset X$
are open and $G$-stable. Then, given any $\mathcal{F} \in D_G^b(X,k)$,
we have a long exact sequence of equivariant cohomology
\begin{align*}
\dots \to
\mathbb{H}_{G}^{j-1}(\mathcal{F}_{U \cap V}) \to
\mathbb{H}_{G}^j(\mathcal{F}) \to
& \mathbb{H}_{G}^j(\mathcal{F}_U) \oplus
\mathbb{H}_{G}^j(\mathcal{F}_V) \to \\ & \to
 \mathbb{H}_{G}^j(\mathcal{F}_{U \cap V}) \to
\mathbb{H}_{G}^{j+1}(\mathcal{F}) \to \dots.
\end{align*}
\end{proposition}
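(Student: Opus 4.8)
The plan is to deduce the equivariant Mayer--Vietoris sequence from its non-equivariant counterpart applied to the Borel construction. Recall that for $\CF \in D_G^b(X,k)$ we have, by definition, a sheaf on $X_G$ (which we also denote $\CF$) such that $\GHyp_G^\bullet(\CF) = H^\bullet(X_G, \CF)$ in the notation of the excerpt, where we slightly abusively write $\CF$ for $\pi_\ast \CF$ on $BG$ or equivalently compute hypercohomology of $\CF$ on $X_G$ directly. The key observation is that an open $G$-stable covering $X = U \cup V$ induces, after applying the functor $(-)_G = (-) \times_G EG$, an open covering $X_G = U_G \cup V_G$ with $U_G \cap V_G = (U \cap V)_G$, since $\times_G EG$ commutes with the relevant unions and intersections (both $U \times EG$ and $V \times EG$ are open in $X \times EG$ and $G$-stable, so descend to opens in $X_G$, and their intersection is $(U \cap V) \times EG$ descended).

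The steps I would carry out are as follows. \textbf{Step 1.} Fix the model $EG$ and form $X_G$, noting that $U_G, V_G \subset X_G$ are open with $X_G = U_G \cup V_G$ and $U_G \cap V_G = (U\cap V)_G$. \textbf{Step 2.} Apply the classical (non-equivariant) Mayer--Vietoris sequence for sheaf hypercohomology on the space $X_G$ with the covering $X_G = U_G \cup V_G$ and the sheaf (complex) $\CF$; the reference \cite[2.6.28]{KS1} supplies this. This gives a long exact sequence relating $H^j(X_G,\CF)$, $H^j(U_G, \CF|_{U_G}) \oplus H^j(V_G, \CF|_{V_G})$, and $H^j((U\cap V)_G, \CF|_{(U\cap V)_G})$. \textbf{Step 3.} Identify each term with the corresponding equivariant hypercohomology: by definition of $\CF_U = i_U^\ast \CF$ for the open inclusion $i_U \colon U \inj X$, one has $(\CF_U)$ on $U_G$ equal to the restriction of $\CF$ along $U_G \inj X_G$, hence $\GHyp_G^j(\CF_U) = H^j(U_G, \CF|_{U_G})$, and similarly for $V$ and $U \cap V$. \textbf{Step 4.} The connecting homomorphisms in the classical sequence are automatically $H_G^\bullet(\pt,k) = H^\bullet(BG,k)$-linear, since the whole sequence lives over $BG$ via $\pi_G \colon X_G \to BG$ and the classical Mayer--Vietoris sequence is functorial with respect to pushforward to $BG$; thus the sequence we obtain is a long exact sequence of graded $H_G^\bullet(\pt,k)$-modules.

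The only point requiring genuine care — and the one I would treat most carefully — is the compatibility of the functor $(-)_G$ with open inclusions and with the $!$- versus $\ast$-pullbacks used to define $\CF_U$: one must check that forming $\CF_U = i_U^\ast \CF$ at the level of $D_G^b$ and then viewing it on $U_G$ agrees with restricting the sheaf $\CF$ on $X_G$ along the open embedding $U_G \inj X_G$. For an \emph{open} inclusion this is clean because $i_U^\ast = i_U^!$ and the Borel construction of an open inclusion is again an open inclusion, and pullback of sheaves commutes with the base change $p^\ast, q^\ast$ appearing in the definition of $D_G^b(X,k)$. Everything else is formal transport of the classical statement through the equivalence $\GHyp_G^\bullet(\CF) = H^\bullet(X_G, \CF)$, so no new input beyond the non-equivariant Mayer--Vietoris sequence is needed; this is exactly the remark preceding the proposition that ``the equivariant statement is a straightforward consequence of the non-equivariant one.''
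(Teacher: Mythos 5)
Your argument is exactly the one the paper intends: the paper gives no proof beyond the remark that the equivariant statement follows from the non-equivariant one (with the citation to Kashiwara--Schapira), and your Steps 1--4 are the straightforward unwinding of that remark via the Borel construction, including the correct observation that $(U\cap V)_G = U_G \cap V_G$ and that $\CF_U$ viewed on $U_G$ is just the restriction of $\CF$ along the open embedding $U_G \hookrightarrow X_G$. This is correct and matches the paper's approach.
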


\subsection{The case of a torus} \label{sec:torus}
Let us suppose now that $G=T$ is a complex torus, i.e.~ a topological
group isomorphic to $(\DC^\times)^r$ for some $r>0$, endowed with the
metric topology.

For $n\ge 0$ we consider the space $(\DC^n\setminus \{0\})^r$
together with the $T$-action given by
$$
(t_1,\dots,t_r)\cdot(x_1,\dots,x_r)=(t_1\cdot x_1,\dots,t_r\cdot x_r).
$$
The embeddings $\DC^n\setminus \{0\}\to \DC^{n+1}\setminus \{0\}$ that map $(z_1,\dots, z_n)$ to
$(z_1,\dots,z_n,0)$ define a direct system
$$
\dots\to (\DC^n\setminus \{0\})^r\to(\DC^{n+1}\setminus \{0\})^r\to \dots
$$
of $T$-spaces. The direct limit
$(\DC^\infty\setminus \{0\})^r:=\lim (\DC^n\setminus \{0\})^r$ is a contractible space together
with a topologically free $T$-action, hence can be chosen as a model
for $ET$.

We denote by $X^\ast(T)$ the character lattice
$\Hom(T,\DC^\times)$ of $T$. Let 
$$
S_k:=S(X^\ast(T)\otimes_\DZ k)
$$
be the symmetric algebra over the free $k$-module $X^\ast(T)\otimes_\DZ k$, graded in such a way that $X^\ast(T)\otimes_\DZ k\subset S_k$ is the homogeneous component of degree $2$. Then the Borel homomorphism (cf.~ \cite{B}, \cite{JanNotes}) gives
a canonical identification
$$
S_k\stackrel{\sim}\to H^\bullet(BT,k)=H_T^\bullet(\pt,k).
$$

\subsection{An attractive proposition}

Now let $X$ be a complex $T$-variety. Recall that a $T$-fixed point $x \in X$ is called \emph{attractive} if all weights of $T$ on the tangent space to $X$ at $x$ lie in an open half space of $X^\ast(T) \otimes_{\mathbb{Z}} \mathbb{R}$. If this is the case then one can find a one parameter subgroup $\alpha: \mathbb{C}^ \times \to T$ and an open neighbourhood $U$ of $x$ such that 
\begin{equation} \label{eq:attract}
\lim_{z \to 0} \alpha(z) \cdot y = x\text{ for all $y \in U$.}
\end{equation}
 If, in addition, $X$ is connected and affine, then $x$ is the unique $T$-fixed point of $X$ and \eqref{eq:attract} holds for all $y \in X$. In particular, the smallest $T$-stable open neighbourhood of $x \in X$ is $X$ itself.

Suppose for the remainder of this section that $X$ is connected and affine, and that $x \in X$ is an attractive fixed point. We denote by $i \colon \{x\}\to X$ the inclusion and by $\pi \colon X\to \{x\}$ the projection. If we apply the functor $\pi_{\ast}$ to
the natural transformation $\id\to i_{\ast}i^\ast$ we get a
natural transformation $\pi_{\ast}\to \pi_{\ast}i_{\ast}i^\ast$. Since $\pi \circ i$ is the identity, we get a natural morphism
$$
\pi_{\ast}\to i^\ast
$$
of functors from $D_T^b(X,k)$ to $D_T^b(\{x\}, k)$.

The goal of the rest of this section is to prove the following (for
similar statements in the non-equivariant or ``weakly equivariant'' setting see
\cite{SprPurity} and \cite{Br}):

\begin{proposition} \label{prop:attr} Suppose that $X$ is connected and affine and that $x\in X$ is an attractive fixed point. Then the morphism of functors
$\pi_{\ast} \to i^\ast$ is an isomorphism.
\end{proposition}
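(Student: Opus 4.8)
The plan is to reduce the statement to a local computation on $X$ via a contracting $\mathbb{C}^\times$-action, exactly in the spirit of Springer's purity argument and Braden's hyperbolic localisation. Since $X$ is connected and affine with attractive fixed point $x$, we fix a one-parameter subgroup $\alpha \colon \mathbb{C}^\times \to T$ as in \eqref{eq:attract}, so that $\lim_{z\to 0}\alpha(z)\cdot y = x$ for \emph{all} $y \in X$. The first step is to observe that it suffices to prove, for each $\CF \in D_T^b(X,k)$ and each point $y \in X$, that the canonical map on equivariant stalks $\GHyp^\bullet(\pi_\ast\CF) \to \GHyp^\bullet(i^\ast\CF)$ is an isomorphism of $H_T^\bullet(\pt,k)$-modules; because both functors land in $D_T^b(\{x\},k)$ and equivariant cohomology there is faithful enough, one can then conclude the natural transformation is an isomorphism. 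Equivalently — and this is cleaner — one works with the forgetful functor $\For$ and shows the underlying (non-equivariant) morphism $(\pi_\ast\CF)_{\{x\}} \to (i^\ast\CF)_{\{x\}}$ is an isomorphism, since an equivariant morphism that is an isomorphism after $\For$ is an isomorphism (the equivariant structure being determined by descent).

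The core of the argument is the following: for the contracting action $\alpha$, the direct image $\pi_\ast$ along the projection to the fixed point computes the same thing as the stalk $i^\ast$ at $x$. Concretely, I would use that for any $\CF \in D^b(X,k)$ in the constructible derived category with respect to a stratification by $\alpha$-stable pieces, the attracting-set filtration gives
\[
H^\bullet(X,\CF) \;\cong\; H^\bullet_{\{x\}}\!\big(\text{attracting set of }x,\CF\big) \;\cong\; \CF_x,
\]
the first isomorphism because $X$ is the full attracting set (so the other attracting cells are empty) and the second by contracting $X$ onto $x$. In the equivariant setting this is implemented by a Mayer–Vietoris / excision argument, or directly by noting that the constant family over $\mathbb{C}^\times$ given by the action map lets one deformation-retract $X_T$-locally onto the fibre over $x$; Proposition \ref{prop:MaVe} is the tool that lets one patch such local statements. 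One checks compatibility with the natural transformation $\pi_\ast \to i^\ast$ by unwinding the adjunction $\id \to i_\ast i^\ast$ and using $\pi \circ i = \id$.

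The main obstacle I anticipate is the non-local-compactness of the Borel construction $X_T$, already flagged in the excerpt in connection with $f^!$ and $f_!$: the clean topological statement "$X$ contracts onto $x$" has to be upgraded $T$-equivariantly, i.e. on $X_T = X \times_T ET$, where the base directions $ET$ obstruct a naive homotopy. The way around this is to work at finite approximations $X \times_T E_nT$ (with $E_nT = (\mathbb{C}^n\setminus\{0\})^r$ as in Section \ref{sec:torus}), where everything is a genuine finite-dimensional variety, prove the contraction statement there using the $\alpha$-action fibrewise — here attractiveness of $x$ and properness of the relevant maps on the approximations is what one needs to verify carefully — and then pass to the limit, using that $D_T^b(X,k)$ and equivariant cohomology are computed as such a limit. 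A secondary point requiring care is that $\CF$ is only assumed to lie in $D_T^b(X,k)$ with no constructibility hypothesis stated; one should either note that the contraction argument applies to an arbitrary bounded equivariant complex (the homotopy being geometric, not stratification-dependent) or invoke that the cases of interest in the paper are constructible, but I would aim for the general statement since the proposition is stated without that restriction. Once the finite-approximation contraction is in hand, identifying the result with the natural transformation $\pi_\ast \to i^\ast$ is a formal diagram chase.
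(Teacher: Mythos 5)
Your overall strategy agrees with the paper's: reduce the equivariant statement to a non-equivariant one, then invoke the contraction lemma of Springer and Braden. The divergence is in how the reduction is carried out, and here your route is considerably heavier than it needs to be. You correctly identify the useful principle that an equivariant morphism which becomes an isomorphism after $\For$ is an isomorphism, but you then worry about non-local-compactness of $X_T$ and propose to argue fibrewise on finite approximations $X\times_T E_nT$ and pass to the limit. The paper sidesteps this entirely with a smooth base change argument: consider the three-column diagram of Cartesian squares linking $F\hookrightarrow X$, $F\times ET\hookrightarrow X\times ET$ and $F_T\hookrightarrow X_T$ (and likewise for the projections $\pi$), and observe that both the quotient $q\colon X\times ET\to X_T$ and the projection $p\colon X\times ET\to X$ are smooth. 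Applying a short base change lemma once along $q$ and once along $p$ reduces the claim to showing that $\pi_*\For(\CF)\to i^*\For(\CF)$ is an isomorphism \emph{on $X$ itself}, so the Borel construction never enters the contraction argument and the limit-approximation apparatus is unnecessary. What that final step then requires --- and what you gesture at without quite isolating --- is that $\For(\CF)$ is \emph{naively equivariant}, i.e.\ $m^*\For(\CF)\cong p^*\For(\CF)$ for the action and projection maps $G\times X\to X$; this is exactly the hypothesis under which the cited arguments of Springer and Braden apply, and the paper establishes it in a separate lemma, again by smooth base change along $X\times EG\to X$. Your observation that no constructibility hypothesis on $\CF$ is needed is correct and is consistent with the paper's treatment.
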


We begin with some lemmata. Suppose we have a pair of Cartesian squares
\[
\xymatrix{
\widetilde{F} \ar[r]^{\widetilde{i}} \ar[d]^q & \widetilde{X} \ar[r] \ar[d]^q \ar[r]^{\widetilde{\pi}} &
\widetilde{F} \ar[d]^q\\
F \ar[r]^{i} & X \ar[r] \ar[r]^{\pi} & F
} 
\]
such that $q$ is smooth and surjective, and $\pi \circ i = \id$ (and
hence $\widetilde{\pi} \circ \widetilde{i} = \id$). The adjunctions
$(\pi^*, \pi_*)$ and $(\widetilde{\pi}^*, \widetilde{\pi_*})$ give
morphisms of functors
\[
\pi_* \to i^* \quad \text{ and } \quad \widetilde{\pi}_* \to \widetilde{i}^*.
\]

\begin{lemma} Let $\CF \in D^b(X,k)$. Then $\pi_* \CF \to i^* \CF$ is an
  isomorphism if and only if $\widetilde{\pi}_* q^*\CF \to \widetilde{i}^* q^*\CF$ is an
  isomorphism.
\end{lemma}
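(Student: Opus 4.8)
The plan is to exploit that $q$ is smooth and surjective, so that $q^*$ is fully faithful after a shift — more precisely $q^! \cong q^*[2d]$ where $d$ is the relative dimension of $q$ (at least locally; one may reduce to the case of constant relative dimension by working on connected components). The key point is that the four functors involved ($\pi_*, i^*, \widetilde{\pi}_*, \widetilde{i}^*$) all commute with $q^*$ up to natural isomorphism, because the squares are Cartesian and $q$ is smooth: proper (or arbitrary, in the bounded derived category with the smooth base-change) base change gives $q^* \pi_* \cong \widetilde{\pi}_* q^*$ and $q^* i^* \cong \widetilde{i}^* q^*$ (the latter is automatic, as pullback always commutes with pullback along a Cartesian square). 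Under these identifications, the natural transformation $\widetilde{\pi}_* q^* \CF \to \widetilde{i}^* q^* \CF$ is identified with $q^*$ applied to the transformation $\pi_* \CF \to i^* \CF$.

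First I would make precise the compatibility of the natural transformations with $q^*$: the morphism $\pi_* \to i^*$ is built from the unit $\id \to i_* i^*$ and the identity $\pi \circ i = \id$, and all of these pieces are preserved by $q^*$ via the base-change isomorphisms (one must check a compatibility of the base-change isomorphism for $i_*$ with the unit of the adjunction $(i^*, i_*)$, which is a standard diagram chase). This reduces the statement to the following assertion: for a morphism $\phi$ in $D^b(\widetilde{X}, k)$ of the special form $\phi = q^* \psi$ with $\psi$ a morphism in $D^b(X,k)$, $\phi$ is an isomorphism if and only if $\psi$ is. The "only if" direction is the substantive one: $q^*$ is conservative because $q$ is surjective (one checks on stalks — for any $\widetilde{x} \in \widetilde{X}$ lying over $x \in X$, the stalk $(q^* \psi)_{\widetilde{x}}$ is identified with $\psi_x$, since $q$ is smooth hence has no higher pullback contributions — and surjectivity ensures every $x$ is hit). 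The "if" direction is trivial since $q^*$ is a functor.

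The main obstacle is the equivariant bookkeeping and the base-change compatibility: one must be careful that the natural transformation $\pi_* \to i^*$ (defined abstractly via adjunction) really does go over to $\widetilde{\pi}_* \to \widetilde{i}^*$ under the canonical isomorphisms, rather than to that transformation composed with some automorphism. This is a purely formal (but somewhat tedious) verification with the standard compatibilities between units, counits and base-change morphisms; I would organize it by first noting $q^* (i_* i^* \CF) \cong \widetilde{i}_* \widetilde{i}^* q^* \CF$ via base change, then checking the square

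$$
\xymatrix{
q^* \CF \ar[r] \ar@{=}[d] & q^* i_* i^* \CF \ar[d]^{\sim} \\
q^* \CF \ar[r] & \widetilde{i}_* \widetilde{i}^* q^* \CF
}
$$

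commutes, where the horizontal maps are the respective units. Granting this, applying $q^* \pi_* \cong \widetilde{\pi}_* q^*$ finishes the identification, and conservativity of $q^*$ closes the argument. (Note: the fact that we are in the $G$-equivariant derived category costs nothing here, since by construction the relevant Cartesian squares and the functors $\pi_*, i^*$ are all induced from the corresponding non-equivariant data on the spaces $X_G$, $\widetilde{X}_G$ etc., and smoothness/surjectivity of $q$ is inherited.)
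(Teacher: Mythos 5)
Your proof follows essentially the same route as the paper's: conservativity of $q^*$ from surjectivity of $q$, smooth base change to transport the morphism $\pi_*\CF\to i^*\CF$ to $\widetilde{\pi}_* q^*\CF\to\widetilde{i}^* q^*\CF$, and a formal compatibility check of the base-change isomorphisms with the adjunction unit (for which the paper simply cites [BLu, Theorem~1.8]). One small slip in your opening: $q^!\cong q^*[2d]$ does not make $q^*$ fully faithful --- for $q\colon\DC^\times\to\pt$ one has $q_*q^*\underline{k}_{\pt}\not\cong\underline{k}_{\pt}$ --- but this is a harmless red herring, since your actual argument uses only conservativity of $q^*$, which is all that is needed and is what the paper uses.
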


\begin{proof} Because $q$ is surjective, $\pi_* \CF \to i^* \CF$ is an
  isomorphism if and only if $q^* \pi_* \CF \to q^* i^* \CF$ is an
  isomorphism. Now $q^* i^* \CF \stackrel{\sim}{\to} \widetilde{i}^* q^* \CF$ and $q^*
  \pi_* \CF \stackrel{\sim}{\to} \widetilde{\pi}_* q^* \CF$ by smooth
  base change. Via these canonical isomorphisms we obtain a map
\[
\widetilde{\pi}_* q^*\CF \to \widetilde{i}^* q^*\CF.
\]
This is the same map (up to
isomorphism) as that
coming from the morphism $\widetilde{\pi}_* \to \widetilde{i}^*$ (cf. \cite[Theorem 1.8]{BLu}.)
\end{proof}

Now suppose a torus $T$ contracts a variety $X$ onto a fixed locus $F
\subset X$. Consider the diagram
\[
\xymatrix{
X \ar@<1ex>[d]^{\pi}& \ar[l]_p X \times ET \ar[r]^{q} \ar@<1ex>[d]^{\pi}
& X \times_T ET \ar@<1ex>[d]^{\pi} \\
F \ar@<1ex>[u]^i & \ar[l]_p F \times ET \ar@<1ex>[u]^i \ar[r]^q & F \times_T ET
\ar@<1ex>[u]^i }.
\]
Both $p$ and $q$ are smooth, and so applying the above lemma twice we
see that, given $\CF \in D^b_T(X,k)$, we have that $\pi_* \CF \to i^* \CF$ is
an isomorphism in $D^b_T(F,k)$ if and only if $\pi_* \For(\CF) \to i^*
\For(\CF)$ is.

Given a $G$-space $X$, let us call $\CF \in D^b(X,k)$ \emph{naively
  equivariant} if we have an isomorphism $m^* \CF \to p^* \CF$ where
$m$ and $p$ denote the action and projection maps
\[
\xymatrix{ G \times X \ar@<0.5ex>[r]^m \ar@<-0.5ex>[r]_p & X. }
\]
Note that, if $G$ acts freely on $X$ then pullback along $X \to X/G$
allows us to view any $\CF \in D^b(X/G,k)$ as a naively equivariant sheaf
on $X$. Note also that if $\CF$ is naively equivariant for a group
$G$, then it is also naively equivariant for any subgroup $H \subset
G$.

\begin{lemma} Suppose that $\CF \in D^b_G(X,k)$. Then $\For(\CF)$ is
  naively equivariant for $G$.
\end{lemma}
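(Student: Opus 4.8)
The statement to prove is that for $\CF \in D^b_G(X,k)$, the underlying sheaf $\For(\CF) = \CF_X \in D^b(X,k)$ is naively equivariant, i.e.\ there is an isomorphism $m^\ast \CF_X \cong p^\ast \CF_X$ where $m,p\colon G\times X \to X$ are the action and projection maps. My approach is to unwind the definition of $\For$ and the equivariant derived category, and to exhibit the required isomorphism by pulling back the defining isomorphism $q^\ast\CF \cong p^\ast\CF_X$ on $X\times EG$ along a suitable smooth surjective map and using base change.

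\textbf{Setup.} Recall $X_G = X\times_G EG$, with $q\colon X\times EG \to X_G$ the quotient map and $p\colon X\times EG \to X$ the first projection; by definition of $D_G(X,k)$ there is an isomorphism $q^\ast\CF \cong p^\ast\CF_X$ in $D(X\times EG,k)$, and $\CF_X = \For(\CF)$. The first step is to build the commuting diagram relating $G\times X$ to $X\times EG$. Consider the space $G\times X\times EG$. There are two natural maps to $X\times EG$: the map $\mu = (m\circ\pr_{G,X})\times (g\cdot{-})_{EG}$, sending $(g,x,e)\mapsto (gx, ge)$, which is $G$-equivariant for the diagonal actions and hence descends; and the map $\nu = \pr_{X,EG}$, sending $(g,x,e)\mapsto (x,e)$. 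Both of these are pullbacks of a single map: under the identification $(G\times X\times EG)/G \cong X\times EG$ (quotienting by the diagonal $G$-action $h\cdot(g,x,e) = (gh^{-1}, hx, he)$ — or rather the appropriate diagonal action making this work), both $\mu$ and the projection forgetting $EG$ fit into Cartesian squares with the defining maps for $X_G$. Concretely, the key observation is that $G\times X\times EG \to X\times EG$ (forgetting $G$, i.e.\ projecting away the first factor) is a trivial $G$-bundle, hence smooth and surjective, and that composing with $q$ gives a map $G\times X\times EG \to X_G$ which equals the composite $G\times X\times EG \xrightarrow{\mu'} X\times EG \xrightarrow{q} X_G$ for an appropriate $\mu'$.

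\textbf{Main argument.} The cleanest route: let $r\colon G\times X\times EG \to X\times EG$ be the projection forgetting the $G$-factor; this is smooth and surjective. Pull back the defining isomorphism $q^\ast\CF\cong p^\ast\CF_X$ along $r$ to get $r^\ast q^\ast\CF \cong r^\ast p^\ast\CF_X$. Now I claim both sides can be rewritten so that the isomorphism becomes $m^\ast\CF_X \boxtimes (\text{something on } EG) \cong p^\ast\CF_X\boxtimes(\text{something on }EG)$, and then restricting to a slice $G\times X\times\{e\}$ (or using that $EG$ is connected, so pullback along $G\times X \to G\times X\times EG$, $(g,x)\mapsto(g,x,e_0)$, is an equivalence onto its image at the level of detecting isomorphisms via a point) yields $m^\ast\CF_X \cong p^\ast\CF_X$. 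More precisely: $r^\ast p^\ast \CF_X \cong p_{G\times X\times EG \to X}^\ast\CF_X$, and $r^\ast q^\ast\CF$, after using that $q\circ r$ factors as $(G\times X\times EG \xrightarrow{\text{diag-quotient-compatible}} X\times EG \xrightarrow{q} X_G)$ in the other way, becomes $(m_{G\times X\to X}\circ\pr)^\ast$ applied to $\CF_X$ tensored with an $EG$-factor that matches. Finally restrict along the section $x\mapsto(g,x,e_0)$; since $e_0\in EG$ is a point and $EG$ is contractible, this loses no information about the isomorphism type, and produces the desired $m^\ast\CF_X\cong p^\ast\CF_X$ on $G\times X$. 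One then notes the isomorphism restricts compatibly, so it lands in $D^b$ as required.

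\textbf{The main obstacle.} The bookkeeping of which diagonal $G$-action is used on $G\times X\times EG$, and checking that the two maps $G\times X\times EG \to X_G$ one wants to compare (via $m$ on the $X$-factor, versus via $p$) are genuinely \emph{both} equal to $q\circ(\text{smooth surjection})$ — essentially the standard fact that for a free action the two maps $G\times X\rightrightarrows X$ become equal after quotienting. I expect the cleanest formulation uses the pair-of-Cartesian-squares lemma style of reasoning already employed above: set up two Cartesian squares, one realizing $m^\ast\CF_X$ and one realizing $p^\ast\CF_X$ as pullbacks of $\CF$ along smooth surjections to $X_G$, conclude both are isomorphic to a common object. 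The diagram-chasing and smooth base change are routine once the right diagram is drawn; identifying that diagram correctly is the only real subtlety, and it is entirely formal — no geometry of $X$ enters.
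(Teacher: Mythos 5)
Your overall plan --- pull the defining isomorphism $q^*\CF\cong p^*\CF_X$ back to $G\times X\times EG$ and invoke smooth base change --- is the right one, and is essentially what the paper does. But the ``main argument'' paragraph does not in fact produce the action map $m$, and the ``main obstacle'' paragraph correctly locates the missing step without carrying it out. Concretely: pulling $q^*\CF\cong p^*\CF_X$ back along the projection $r\colon G\times X\times EG\to X\times EG$ gives $(q\circ r)^*\CF\cong \pr_X^*\CF_X$, and $q\circ r$ sends $(g,x,e)\mapsto[x,e]$ --- a map in which $g$, and hence $m$, never appears. Restricting to the constant slice $G\times X\times\{e_0\}$ therefore yields only the tautology $\pr^*\CF_X\cong\pr^*\CF_X$. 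The two maps $G\times X\times EG\to X_G$ you want to compare, namely $q\circ r$ and $q\circ(m\times\id_{EG})$, are genuinely different (the first sends $(g,x,e)\mapsto[x,e]$, the second $(g,x,e)\mapsto[gx,e]$), and they are \emph{not} both of the form ``$q$ after a smooth surjection forgetting $G$'' on the nose. What is missing is the shearing homeomorphism $\phi\colon G\times X\times EG\to G\times X\times EG$, $(g,x,e)\mapsto(g,x,g^{-1}e)$, which satisfies $q\circ r\circ\phi = q\circ(m\times\id_{EG})$ (since $[x,g^{-1}e]=[gx,e]$), together with $\pr_X\circ\phi=\pr_X$ and $p'\circ\phi=p'$, where $p'\colon G\times X\times EG\to G\times X$ forgets the $EG$-factor. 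Pulling the defining isomorphism back along $r\circ\phi$ then yields $p'^*m^*\CF_X\cong p'^*\pr^*\CF_X$, and since $p'$ has contractible fibres, $p'^*$ is fully faithful, so this descends to $m^*\CF_X\cong\pr^*\CF_X$. Your phrase ``factors \dots in the other way'' is gesturing at this shear but not writing it down, and your slice must be the twisted one $(g,x)\mapsto(g,x,g^{-1}e_0)$, not the constant one.

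The paper's proof packages exactly this shear into a cleaner intermediate statement: because $q$ is the quotient map of a free $G$-action (diagonal on $X\times EG$), the pullback $q^*\CF$ is naively equivariant for that diagonal action (this is the observation made immediately before the lemma). One then notes that $p\colon X\times EG\to X$ is a smooth $G$-equivariant map, and for any such map the pushforward of a naively equivariant sheaf is again naively equivariant: apply smooth base change to the Cartesian square whose rows are the action and projection maps $G\times(X\times EG)\rightrightarrows X\times EG$ and $G\times X\rightrightarrows X$, with vertical arrows $\id_G\times p$ and $p$. This is the content of ``smooth base change applied to the projection $p$'' in the paper's one-line proof. I would suggest restructuring your argument along these lines, or at minimum writing out $\phi$ explicitly and verifying the three identities above; as written, the key comparison is asserted but not established.
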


\begin{proof} Consider the quotient map $q : X \times EG \to X
  \times_G EG$. Then $q^* \CF$ is naively equivariant for $G$. Then
  smooth base change applied to the projection $p : X \times EG \to X$
  yields that $\For(\CF)=p_* q^* \CF$ is naively equivariant for $G$.
  \end{proof}

We can now prove the attractive proposition:

\begin{proof}[Proof of Proposition \ref{prop:attr}] The above arguments reduce the proof of the above to
  showing that, if $\CF \in D^b(X,k)$ is naively equivariant for the
  action of a one dimensional torus  which contracts $X$ onto $x
  \in X$, then $\pi_* \CF \to i^* \CF$ is an isomorphism. But this is
  shown in \cite{SpIH} (see also \cite{Br} for another
  account of this argument).
\end{proof}

\section{The localisation homomorphism}

Throughout this section we assume that $k$ is a unique factorisation domain and
 that $X$ is a normal complex algebraic
variety (endowed with its metric topology), acted upon algebraically by
a complex torus $T\cong (\DC^\times)^r$. In addition, we assume the following:

\begin{itemize}
\item[(A1)] \label{A1} The torus acts on $X$ with only finitely many zero- and one-dimensional
orbits and the closure of each one-dimensional orbit is smooth.
\item[(A2)] \label{A3} $X$ admits a covering by open affine connected $T$-stable subvarieties, each of which contains an attractive (hence unique) fixed point.
\end{itemize}
Note that, by a result of Sumihiro (see \cite{Sum, KKLV}),  $X$ has a covering by open affine $T$-stable subvarieties, hence (A2) is
automatically satisfied if $X$ is proper and each $T$-fixed point is
attractive. 

Let $X^T\subset X$ be the subspace of $T$-fixed points and $\CF\in D^b_T(X,k)$. The restriction homomorphism associated to the inclusion $X^T\inj X$,
$$
\THyp^\bullet(\CF)\to \THyp^\bullet(\CF_{X^T}),
$$ is called the
{\em localisation homomorphism}.

As $X^T$ is
a finite set we have $\THyp^\bullet(\CF_{X^T})=
\bigoplus_{x\in X^T}\THyp^\bullet(\CF_{x})$. Following the results of \cite{ChangSkjel}
and \cite{GKM} we will show that for certain choices of $X$, $k$ and $\CF$ the localisation map is
injective and give an explicit description of its image. This is conveniently phrased in terms of {\em moment graphs} (cf.~\cite{BrM}), as  it turns out that this
image is determined by the restriction of $\CF$ to the
one-dimensional $T$-orbits in $X$.

\subsection{One-dimensional orbits}
Suppose that $E\subset X$ is a one-di\-men\-sio\-nal $T$-orbit. Then $E\cong
T/\Stab_T(x)$ for any $x\in E$. Now $\Stab_T(x)$ is the kernel of a
character $\alpha_E\in X^\ast(T)$ which is well-defined up to
a sign. From now on we fix a choice of $\alpha_E$ for each
one-dimensional orbit $E$ in $X$. Nothing that follows depends on
this choice. 

As before we denote by $S_k$ the $\DZ$-graded symmetric
algebra of the free $k$-module
$X^\ast(T)\otimes_\DZ k$  and identify it with the $T$-equivariant cohomology
of a point with coefficients in $k$. Given $\alpha\in X^\ast(T)$ we often abuse notation and denote by $\alpha$ as well the image of
$\alpha\otimes 1\in X^\ast(T)\otimes_\DZ k$ in $S_k$.

Now $\alpha_E$ acts as zero on $H^\bullet_T(E,k)$ (see, for example, \cite[Section 1.9]{Jan}). As $\THypgr(\CF_E)$ is a $H_T^\bullet(E,k)$-module, we conclude:

\begin{lemma}\label{lemma-alphaann} For any one-dimensional $T$-orbit $E$ in $X$ and any $\CF\in D_T^b(X,k)$ we have $\alpha_E\THypgr(\CF_E)=0$.
\end{lemma}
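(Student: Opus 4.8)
The plan is to reduce the statement to a known fact about the equivariant cohomology of a one-dimensional torus orbit, namely that the chosen character $\alpha_E$ annihilates $H^\bullet_T(E,k)$, and then to transport this annihilation to $\THypgr(\CF_E)$ using the module structure. Recall that $E \cong T/\Stab_T(x)$ and that $\Stab_T(x)$ is precisely the kernel of $\alpha_E$, so $E$ is equivariantly a copy of $\mathbb{C}^\times$ on which $T$ acts through the one-dimensional quotient torus $T/\ker(\alpha_E)$. The cohomology $H^\bullet_T(E,k)$ is therefore the cohomology of $E_T = E \times_T ET$; since $T$ acts on $E$ with trivial stabiliser in the quotient and $E$ is a homogeneous space, $E_T$ is homotopy equivalent to $B\ker(\alpha_E)$, and under the Borel identification the image of $\alpha_E$ in $S_k = H^\bullet_T(\mathrm{pt},k)$ maps to zero in $H^\bullet_T(E,k)$. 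This is exactly the statement quoted in the excerpt (e.g.\ \cite[Section 1.9]{Jan}), so I would simply cite it.

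With that in hand, the remaining step is purely formal. For any $\CF \in D^b_T(X,k)$ the object $\CF_E = i_E^\ast \CF$ lives in $D^b_T(E,k)$, and applying the equivariant cohomology functor $\THypgr = \THypgr\colon D^b_T(E,k) \to H^\bullet_T(E,k)\catmod^\DZ$ (composed with restriction of scalars along $S_k = H^\bullet_T(\mathrm{pt},k) \to H^\bullet_T(E,k)$, which is induced by the projection $E \to \mathrm{pt}$) shows that $\THypgr(\CF_E)$ is naturally a graded module over $H^\bullet_T(E,k)$, and its $S_k$-module structure factors through $H^\bullet_T(E,k)$. Since $\alpha_E$ acts as $0$ on $H^\bullet_T(E,k)$, and the $S_k$-action on $\THypgr(\CF_E)$ is obtained by restricting the $H^\bullet_T(E,k)$-action along $S_k \to H^\bullet_T(E,k)$, we get $\alpha_E \cdot \THypgr(\CF_E) = 0$ directly. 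This is what the lemma asserts.

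There is essentially no obstacle here: the only subtlety worth spelling out is the compatibility of the two module structures, i.e.\ that the $H^\bullet_T(\mathrm{pt},k)$-module structure on $\THypgr(\CF_E)$ coming from the map $E \to \mathrm{pt}$ agrees with the one obtained by first viewing $\THypgr(\CF_E)$ as an $H^\bullet_T(E,k)$-module and then restricting along the pullback $H^\bullet_T(\mathrm{pt},k) \to H^\bullet_T(E,k)$. This is a standard functoriality statement for equivariant cohomology (the projection $E \to \mathrm{pt}$ factors any map from a point, and hypercohomology is a module over the cohomology of the base compatibly with pullback), so it can be dispatched in a line. Thus the proof is: quote that $\alpha_E$ kills $H^\bullet_T(E,k)$, observe $\THypgr(\CF_E)$ is an $H^\bullet_T(E,k)$-module with $S_k$ acting through this ring, and conclude.
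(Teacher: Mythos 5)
Your argument is exactly the one in the paper: both cite that $\alpha_E$ vanishes in $H^\bullet_T(E,k)$ and then use that $\THypgr(\CF_E)$ is an $H^\bullet_T(E,k)$-module to conclude. The extra details you supply (the identification $E \cong T/\ker(\alpha_E)$, the homotopy equivalence of $E_T$ with $B\ker(\alpha_E)$, the compatibility of module structures) are correct and merely unpack what the paper leaves to the cited reference.
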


\subsection{The localisation theorem -- part I}

 For any closed connected subgroup $\Gamma$ of $T$ we let $X^\Gamma$
 be the subset of $\Gamma$-fixed points in $X$. Let us fix a closed
 subspace $Z \subset X$ which is a discrete union
\[
Z = X^{\Gamma_1} \sqcup \dots \sqcup X^{\Gamma_n} 
\]
 of the fixed points in $X$ of finitely many connected subtori $\Gamma_1,
 \dots, \Gamma_n \subset T$. We set 
$$
P^Z:=\left\{\alpha_E\in X^\ast(T)\left|\,
\begin{matrix}
 \text{$E$ is a one-dimensional}\\
\text{$T$-orbit in $X\setminus Z$}
\end{matrix}
\right.
\right\}
$$
and define
$$
s^{Z}:=\prod_{\alpha\in P^Z}\alpha\in S_k.
$$
In addition to (A1) and (A2) we assume from now on:
\begin{itemize}
\item[(A3)] for each one-dimensional
orbit $E$ in $X$ the image of $\alpha_E\in X^\ast(T)$ is non-zero in
$S_k$.
\end{itemize}
(Of course this condition is vacuous if the characteristic of $k$ is $0$.)

We now come to the first part of
the localisation theorem. In the characteristic 0 case it is due to
Goresky, Kottwitz and MacPherson (cf.~\cite{GKM}). 

 \begin{theorem}\label{theorem-loc1} Assume that the assumptions (A1), (A2) and (A3) hold and let $\CF\in D_T^b(X,k)$. Suppose that
  $\THyp^\bullet(\CF)$ is a graded free $S_k$-module. 
Then the restriction homomorphism
$$
\THyp^\ast(\CF)\to \THyp^\ast(\CF_{Z})
$$
is injective and becomes an isomorphism after inverting $s^{Z}\in
S_k$, i.e.~after applying the functor $\cdot\otimes_{S_k} S_k[1/s^{Z}]$.
\end{theorem}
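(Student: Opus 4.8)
The plan is to prove Theorem \ref{theorem-loc1} by reducing to the fundamental local situation governed by the attractive proposition (Proposition \ref{prop:attr}) and then gluing via Mayer--Vietoris. First I would set up the covering: by assumption (A2), $X$ has a covering by open affine connected $T$-stable subvarieties, each containing a (unique, attractive) fixed point. Refining, one may assume the cover $\{U_x\}_{x \in X^T}$ is indexed by the fixed points, with $x \in U_x$ the attractive fixed point of $U_x$. By Proposition \ref{prop:attr}, for each such $U_x$ and each $\CF$ we have $\THyp^\bullet(\CF_{U_x}) \xrightarrow{\sim} \THyp^\bullet(\CF_x)$, so the restriction to fixed points on the small affine pieces is literally an isomorphism (not just after localisation). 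The content of the theorem is that one can patch these local isomorphisms along the intersections $U_x \cap U_y$, where the relevant combinatorics is controlled by the one-dimensional orbits $E$ whose closure connects $x$ and $y$, whose stabiliser characters are the $\alpha_E$.

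Second, I would run an induction on the number of fixed points (equivalently on the size of the cover), using the Mayer--Vietoris sequence of Proposition \ref{prop:MaVe}. Write $X = U \cup V$ with $U$ a union of some of the $U_x$ and $V = U_{x_0}$ for one further fixed point $x_0$; then $U \cap V$ is an open $T$-stable subvariety whose fixed-point set is a proper subset of $X^T$, and whose one-dimensional orbits are among those of $X$. The Mayer--Vietoris sequence, together with the freeness hypothesis on $\THyp^\bullet(\CF)$ (and the inductively known freeness/injectivity for $U$, $V$, $U \cap V$ — here one needs that $\THyp^\bullet(\CF_U)$ etc. are also free, which should follow from the attractive proposition since on each affine piece the cohomology is $\THyp^\bullet(\CF_x)$, a module over $S_k$ that is free because $\CF \in D^b_T(X,k)$ restricted to a point has free equivariant cohomology when the total space does — this may require a small separate argument), gives a short exact sequence realising $\THyp^\bullet(\CF)$ as a fibre product. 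This yields injectivity of $\THyp^\bullet(\CF) \to \THyp^\bullet(\CF_U) \oplus \THyp^\bullet(\CF_V)$, and iterating down to the fixed points, injectivity of the full localisation map $\THyp^\bullet(\CF) \to \THyp^\bullet(\CF_Z)$ (the statement for a general $Z$, rather than just $Z = X^T$, is handled by noting $Z$ contains enough of the structure: $X^{\Gamma_i}$-components are again normal $T$-varieties satisfying (A1)--(A3), and the one-dimensional orbits of $X \setminus Z$ are exactly those contributing to $P^Z$).

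Third, for the isomorphism after inverting $s^Z$, I would argue that inverting $s^Z$ kills the contribution of every one-dimensional orbit \emph{not} meeting $Z$. Concretely, by Lemma \ref{lemma-alphaann}, $\alpha_E$ annihilates $\THyp^\bullet(\CF_E)$ for each one-dimensional orbit $E$; so the "extra" terms appearing in the Mayer--Vietoris analysis when one passes from $Z$ to all of $X$ — which are built from $\THyp^\bullet(\CF_E)$ for $E \subset X \setminus Z$ — become zero after tensoring with $S_k[1/s^Z]$, since $s^Z$ is divisible by $\alpha_E$ for each such $E$. Thus localising at $s^Z$ makes each inclusion $\THyp^\bullet(\CF_W) \to \THyp^\bullet(\CF_{W \cap Z})$ (for $W$ a piece of the cover) an isomorphism, and patching gives that $\THyp^\bullet(\CF) \otimes_{S_k} S_k[1/s^Z] \to \THyp^\bullet(\CF_Z) \otimes_{S_k} S_k[1/s^Z]$ is an isomorphism.

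The main obstacle I anticipate is the bookkeeping in the inductive/Mayer--Vietoris step: one must carefully identify the image of the localisation map (the "fibre product" description) at each stage, control the maps $\THyp^{j}(\CF_{U \cap V}) \to \THyp^{j+1}(\CF)$ connecting the pieces, and verify that the freeness hypothesis propagates to all the subvarieties involved so that the long exact sequences break into short exact ones. A secondary subtlety is the passage from $X^T$ to a general discrete union $Z = \bigsqcup X^{\Gamma_i}$: one needs that each $X^{\Gamma_i}$ is itself normal and inherits (A1)--(A3) (using that $\overline{E}$ is smooth and hence $E^{\Gamma_i}$ behaves well), and that the product $s^Z$ correctly records precisely the orbits one must invert away. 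The actual homological algebra, once the geometric inputs (Propositions \ref{prop:attr} and \ref{prop:MaVe}, Lemma \ref{lemma-alphaann}) are in place, should be routine.
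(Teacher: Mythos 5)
Your approach is genuinely different from the paper's: you propose a cover-based induction with Mayer--Vietoris and Proposition~\ref{prop:attr}, whereas the paper uses a single open--closed decomposition $i_!i^!\CF \to \CF \to j_*j^*\CF$ relative to an open neighbourhood $U$ of $Z$, together with Lemma~\ref{lem:limit} to pass to the limit and reach $Z$ itself. However, the proposal has a genuine gap that the paper's structure is precisely designed to close, and as written your argument does not establish the theorem.

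The missing ingredient is the annihilation statement Lemma~\ref{lem:annglobal}: for any $\CF$ on $X \setminus Z$, the module $\THyp^\bullet_K(\CF)$ is annihilated by a \emph{power} of $s^Z$. This is what makes everything work. Once one knows that $\THyp^\bullet_K(i_!i^!\CF)$ is torsion and $\THyp^\bullet_K(\CF)$ is free, the connecting map from the torsion module to the free module must vanish, giving injectivity of $\THyp^\bullet_K(\CF) \to \THyp^\bullet_K(\CF_U)$; inverting $s^Z$ kills kernel and cokernel, giving the isomorphism. Your Mayer--Vietoris induction provides no such mechanism. The sequence in Proposition~\ref{prop:MaVe} is a \emph{long} exact sequence, and nothing in your set-up forces the connecting maps $\THyp^{j-1}(\CF_{U \cap V}) \to \THyp^j(\CF)$ to be zero; freeness of $\THyp^\bullet(\CF)$ alone does not do it unless you already know $\THyp^\bullet(\CF_{U \cap V})$ is torsion. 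Your appeal to Lemma~\ref{lemma-alphaann} (``the extra terms are built from $\THyp^\bullet(\CF_E)$ for $E \subset X \setminus Z$'') is exactly where the gap is: the intersections $U_x \cap U_y$ (and, more generally, $X \setminus Z$) are not disjoint unions of one-dimensional orbits; they are genuine open $T$-varieties and their equivariant cohomology is not a priori a direct sum of $\THyp^\bullet(\CF_E)$'s. Relating the cohomology of the complement to the characters $\alpha_E$ is precisely the content of Lemma~\ref{lem:annglobal}, whose proof requires Brion's nontrivial construction of a \emph{finite} $T$-equivariant map $\pi: X \to V := \bigoplus_E \DC_{\alpha_E}$ from an affine chart onto a $T$-module (using normality of $X$ and smoothness of $\overline{E}$), followed by a reduction to Lemma~\ref{lem:vec} via the projection $V \to V'$. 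Without this the argument collapses, both for injectivity and for the localisation isomorphism.

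Two secondary issues: (i) you assert that $\THyp^\bullet(\CF_x)$, and hence $\THyp^\bullet(\CF_{U_x})$, is free over $S_k$, attributing this to the attractive proposition and the freeness of $\THyp^\bullet(\CF)$; this implication is not true in general and the paper does not use or need any such claim — the only freeness input is for the total cohomology, and it is used solely to rule out torsion in the kernel. (ii) Your induction adds one affine piece $V = U_{x_0}$ at a time, but $U \cap V$ may contain \emph{no} fixed points (since $x_0 \notin U$), so $U \cap V$ does not inherit the hypotheses of the theorem, and ``inductively known injectivity for $U \cap V$'' has no meaning. In short, the homological algebra you call routine is where the theorem actually lives, and the paper's route via the torsion Lemma~\ref{lem:annglobal} and the limit Lemma~\ref{lem:limit} is not an optional packaging but the substance of the proof.
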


The proof of the theorem will take up the rest of this section. We follow Brion's account \cite[Section 2]{B} of the characteristic $0$ case quite closely, but at points some additional care is needed.

Let $K\cong (S^1)^r
\subset T\cong (\DC^\times)^r$ be the maximal compact subtorus of $T$. We can regard $X$ as
a $K$-space via restriction of the action. This yields a functor
$$
\res_K^T : D_T^b(X,k) \to D_K^b(X,k).
$$
As $T/K$ is contractible, for any equivariant sheaf
$\CG \in D^b_T(X,k)$ restriction gives an isomorphism
\[ \mathbb{H}_T^\bullet(\mathcal{G}) \stackrel{\sim}{\to}
\mathbb{H}_{K}^\bullet( \res_{K}^{T} \mathcal{G}). \]
In particular, we have a canonical isomorphism $H^\bullet_K(pt,k) \cong S_k$.
In the following we write $\mathbb{H}_{K}^\bullet( \mathcal{G})$ for
$\mathbb{H}_{K}^\bullet( \res_{K}^{T} \mathcal{G})$. Hence, for the proof of Theorem \ref{theorem-loc1},  it is enough to consider the restriction homomorphism
$$
\mathbb{H}_{K}^\bullet(\CF)\to \mathbb{H}_{K}^\bullet(\CF_{Z})
$$ 
and to show that it is 
injective and becomes an isomorphism after inverting $s^Z$.

Before we prove this we need a couple of preliminary results. We state them for the $K$-equivariant cohomology, however all lemmata except
Lemma \ref{lem:limit} are true with $T$ in place of $K$.

First we assume that $X=V$ is a finite dimensional $T$-module. Let
$P \subset X^*(T)$ be the characters occurring in $V$ and
$s = \prod_{\chi \in P} \chi \in S_k$ their product. Here is the first
step towards the localization theorem.

\begin{lemma} \label{lem:vec}
If $\mathcal{F} \in D_K^b(V \setminus \{ 0 \},k )$
 then $\mathbb{H}^{\bullet}_K(\mathcal{F})$ is annihilated by a power of $s$.
\end{lemma}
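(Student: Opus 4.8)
The plan is to reduce to the simplest possible case via an equivariant Mayer--Vietoris argument and then to do an explicit computation there. Since $V \setminus \{0\}$ carries a free action of the scalar circle inside $K$ when $V \ne 0$, but we cannot assume such freeness for an arbitrary torus, I would instead stratify $V \setminus \{0\}$ by weight-space incidence. First note that if $V = 0$ there is nothing to prove, so assume $\dim V \geq 1$. The key observation is that $V\setminus\{0\}$ is covered by the open sets $U_\chi = \{ v \in V : \text{the } \chi\text{-component of } v \text{ is nonzero} \}$, as $\chi$ ranges over the (finitely many) weights occurring in $V$. On each $U_\chi$ the character $\chi$ acts invertibly in a suitable sense: more precisely, $U_\chi$ retracts $K$-equivariantly onto the subset where $v$ equals its $\chi$-component, and on that locus the subtorus $\ker \chi$ acts, while the quotient circle acts freely away from $0$. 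This yields that $\mathbb{H}^\bullet_K(\mathcal{F}_{U_\chi})$ is annihilated by a power of $\chi$ for any $\mathcal{F}$.

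The main step is then an induction on the number of weights, using Proposition \ref{prop:MaVe}. Write the weights as $\chi_1, \dots, \chi_m$ and set $W_j = U_{\chi_1} \cup \dots \cup U_{\chi_j}$, so that $W_m = V \setminus \{0\}$. I would show by induction on $j$ that $\mathbb{H}^\bullet_K(\mathcal{F}_{W_j})$ is annihilated by a power of $\chi_1 \cdots \chi_j$. For the inductive step, apply the equivariant Mayer--Vietoris sequence to $W_j = W_{j-1} \cup U_{\chi_j}$: in the long exact sequence
\[
\dots \to \mathbb{H}^\bullet_K(\mathcal{F}_{W_j}) \to \mathbb{H}^\bullet_K(\mathcal{F}_{W_{j-1}}) \oplus \mathbb{H}^\bullet_K(\mathcal{F}_{U_{\chi_j}}) \to \mathbb{H}^\bullet_K(\mathcal{F}_{W_{j-1} \cap U_{\chi_j}}) \to \dots
\]
the middle term is annihilated by a power of $\chi_1 \cdots \chi_{j-1}$ on the first summand (induction) and by a power of $\chi_j$ on the second, hence by a power of $\chi_1 \cdots \chi_j$. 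An element of $\mathbb{H}^\bullet_K(\mathcal{F}_{W_j})$ killed by no power would map to zero in the middle term, hence lift to $\mathbb{H}^{\bullet-1}_K(\mathcal{F}_{W_{j-1}\cap U_{\chi_j}})$; since $W_{j-1} \cap U_{\chi_j}$ is itself a union of $j-1$ of the sets $U_{\chi_i} \cap U_{\chi_j}$, a parallel induction (or the same argument applied to the restriction of $\mathcal{F}$ to $U_{\chi_j}$, noting $U_{\chi_j}$ retracts onto a vector space on which $\chi_j$ is inverted) shows that group is also annihilated by a power of $\chi_1\cdots\chi_{j-1}$, and one concludes by a standard diagram chase that a suitably high power of $\chi_1 \cdots \chi_j$ kills the whole of $\mathbb{H}^\bullet_K(\mathcal{F}_{W_j})$. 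Taking $j = m$ gives that $\mathbb{H}^\bullet_K(\mathcal{F})$ is annihilated by a power of $\chi_1 \cdots \chi_m = s$.

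The step I expect to be the main obstacle is the base case, namely showing that $\mathbb{H}^\bullet_K(\mathcal{F}_{U_\chi})$ is annihilated by a power of $\chi$ for an arbitrary $\mathcal{F} \in D^b_K(U_\chi, k)$. The clean way to see this is to observe that $U_\chi$ deformation retracts $K$-equivariantly onto $V_\chi \setminus \{0\}$, where $V_\chi$ is the $\chi$-weight space; on $V_\chi \setminus \{0\}$ the subtorus $K \cap \ker\chi$ acts, and $K/(K\cap \ker\chi)$ is a circle acting freely, so $\mathbb{H}^\bullet_K(\mathcal{F}_{U_\chi}) = \mathbb{H}^\bullet_{K\cap\ker\chi}((V_\chi\setminus\{0\})/S^1, \ldots)$, over which $H^\bullet_K(\mathrm{pt},k) \to H^\bullet_{K\cap\ker\chi}(\mathrm{pt},k)$ has image on which $\chi$ acts nilpotently — indeed $\chi$ restricts to a nilpotent (in fact zero after the retraction identification, by the mechanism of Lemma \ref{lemma-alphaann}) element. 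One subtlety is that $\mathbb{H}^\bullet_K(\mathcal{F}_{U_\chi})$ need not be finitely generated over $S_k$, but it is bounded and each cohomological degree is a finitely generated $S_k$-module supported (as a sheaf on $\Spec S_k$) on the hyperplane $\chi = 0$, which is exactly what is needed: a bounded complex whose cohomology is set-theoretically supported on $V(\chi)$ has all cohomology killed by a single power of $\chi$. Packaging this support statement carefully — using that $X^*(T)\otimes k$ is free of finite rank over the UFD $k$ so that $S_k$ is a polynomial ring and the notion of support behaves well — is the technical heart, but once it is in place the Mayer--Vietoris induction above is routine.
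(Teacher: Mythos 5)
Your Mayer--Vietoris strategy matches the paper's, but there is a genuine gap in your base case. You cover $V \setminus \{0\}$ by the opens $U_\chi$ indexed by the weights of $V$, so that $U_\chi$ retracts equivariantly onto the punctured weight space $V_\chi \setminus \{0\}$, and then you argue that $\mathbb{H}^\bullet_K(\mathcal{F}_{U_\chi})$ is killed by a power of $\chi$ because it is ``bounded'' and degreewise supported on $V(\chi)$. The boundedness claim is false: already for $\mathcal{F} = \ul{k}_{U_\chi}$ the Gysin sequence for $V_\chi \supset V_\chi \setminus \{0\}$ gives $H^\bullet_K(U_\chi, k) \cong S_k / \chi^{d_\chi} S_k$ (with $d_\chi = \dim V_\chi$), which is nonzero in all sufficiently large even degrees. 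For a graded module unbounded above, degreewise support on $V(\chi)$ does not produce a uniform annihilating power of $\chi$ --- consider $\bigoplus_{n \ge 0} S_k/\chi^n S_k$ --- so the step as written does not close.

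The repair is available without any finiteness input: the equivariant projection $U_\chi \to V_\chi \setminus \{0\}$ makes $\mathbb{H}^\bullet_K(\mathcal{F}_{U_\chi})$ a module over the ring $H^\bullet_K(V_\chi \setminus \{0\}, k) \cong S_k/\chi^{d_\chi}S_k$, so $\chi^{d_\chi}$ annihilates it outright. The paper is even more economical: it first fixes a basis $V \cong \bigoplus_{i} \mathbb{C}_{\chi_i}$ (so repeated weights give repeated summands) and uses the opens $U_i$ where the $i$-th coordinate is nonzero; then $U_i$ projects equivariantly onto the one-dimensional orbit $\mathbb{C}^\times_{\chi_i}$, and Lemma \ref{lemma-alphaann} shows $\chi_i$ already kills $\mathbb{H}^\bullet_K$ of anything on $U_i$. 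Either choice of cover yields a power of $s$, and your Mayer--Vietoris induction then goes through --- though you could streamline it: $\mathbb{H}^\bullet_K(\mathcal{F}_{W_{j-1}\cap U_{\chi_j}})$ is already killed by $\chi_j$ alone, since $W_{j-1} \cap U_{\chi_j}$ lies inside $U_{\chi_j}$, so no secondary induction on intersections is needed.
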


\begin{proof} Fix an isomorphism
\begin{equation} \label{eq:fixediso}
V \cong
\mathbb{C}_{\chi_1} \oplus \mathbb{C}_{\chi_2}
\oplus \dots \oplus \mathbb{C}_{\chi_m} \end{equation}
where $\chi_1, \chi_2, \dots, \chi_m \in P$. (Here, given $\chi \in
X^*(T)$, $\mathbb{C}_{\chi}$ denotes the one-dimensional $T$-module with
character $\chi$.)
We will use this isomorphism to write elements of $V$
as $(x_j)_{1 \le j \le m}$.
For any $1 \le i \le m$ consider the subset
\[ U_i = \{ (x_j) \in V \; | \; x_i \ne 0 \}. \]
Projection gives us an equivariant map
$U_i \to \mathbb{C}_{\chi_i}^{\times}$.
By Lemma \ref{lemma-alphaann}, the equivariant cohomology  $\mathbb{H}^{\bullet}_K(\mathcal{G})$ of each
$\mathcal{G} \in D^b_K(U_i,k)$ is annihilated by $\chi_i$.

However, $V \setminus \{ 0 \}$ is covered by the sets $U_i$
for $1 \le i \le m$ and the Mayer--Vietoris sequence allows us
to conclude that $\mathbb{H}^{\bullet}_K(\mathcal{F})$ is
annihilated by a power of  $s$. 
\end{proof}

Now let $Z\subset X$ be as before. From the above we deduce the second step:

\begin{lemma} \label{lem:annglobal}
If $\mathcal{F} \in D_K^b(X \setminus {Z},k)$ then
$\mathbb{H}_K^\bullet(\mathcal{F})$ is annihilated by a power of $s^Z$.
\end{lemma}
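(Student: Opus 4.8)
The plan is to follow the strategy of Lemma~\ref{lem:vec}: cover $X\setminus Z$ by finitely many $T$-stable open subsets, on each of which a single character coming from a one-dimensional orbit in $X\setminus Z$ already annihilates all equivariant cohomology (via Lemma~\ref{lemma-alphaann}), and then patch with the Mayer--Vietoris sequence of Proposition~\ref{prop:MaVe}.

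First I would reduce to a local model. By Sumihiro's theorem and (A2), $X$ has a finite cover by connected affine $T$-stable open subsets carrying an attractive fixed point; since $X$ is separated and such a chart is the smallest $T$-stable open neighbourhood of its unique fixed point, any two members are either comparable or have intersection free of $T$-fixed points, and iterating Proposition~\ref{prop:MaVe} reduces the statement to the case in which $X$ is connected, affine, and has an attractive fixed point $x$ (note $x\in X^T\subseteq Z$; note also that, as $Z$ is a disjoint union of sets each containing the non-empty $X^T$, in fact $Z=X^\Gamma$ for a single connected subtorus $\Gamma$). Because $x$ is attractive there is a $T$-equivariant closed embedding $X\hookrightarrow V:=T_xX$ with $x\mapsto 0$; writing $V=\bigoplus_j \DC_{\chi_j}$ as a sum of weight spaces, attractiveness forces every weight $\chi_j$ to be non-zero. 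Equivariance gives $X^\Gamma=X\cap V^\Gamma$, so with $Z_V:=V^\Gamma$ --- a linear subspace containing $0$ --- the subvariety $X\setminus Z=X\cap(V\setminus Z_V)$ is \emph{closed} in $V\setminus Z_V$; denoting this embedding by $\iota$, we have $\mathbb{H}_K^\bullet(\CF)\cong\mathbb{H}_K^\bullet(\iota_\ast\CF)$, so it suffices to bound $\mathbb{H}_K^\bullet$ of a sheaf on $V\setminus Z_V$ supported on the closed subvariety $X\cap(V\setminus Z_V)$.

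Now call an index $j$ \emph{good} if $\chi_j$ equals $\alpha_E$ for some one-dimensional $T$-orbit $E$ of $X$ with $E\not\subseteq Z$, so that $\chi_j\in P^Z$. The combinatorial heart of the argument --- and the step I expect to be the main obstacle --- is the claim that $X\cap(V\setminus Z_V)\subseteq\bigcup_{j\text{ good}}\{v\in V:v_j\neq 0\}$. To see it, take $v\in X\cap(V\setminus Z_V)$ with support $J=\{j:v_j\neq 0\}$; the cone $C_J:=\sum_{j\in J}\DR_{\geq 0}\chi_j$ is strongly convex (its weights lie in an open half-space), and for an extreme ray index $j\in J$ one can pick a cocharacter $\lambda$ vanishing on $\chi_j$ and strictly positive on the remaining weights, whence $\lim_{z\to 0}\lambda(z)\cdot v$ lies on the $\chi_j$-axis and in $\ol{T\cdot v}\subseteq X$; thus the orbit $\DC^\times_{\chi_j}$ (suitably interpreted if several weights are proportional to $\chi_j$) is a one-dimensional $T$-orbit of $X$, and a short computation shows that the identity components of the stabilizers of these extreme-ray orbits intersect in $\Stab_T(v)^\circ$. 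Since $v\notin Z_V=V^\Gamma$ we have $\Gamma\not\subseteq\Stab_T(v)^\circ$, so $\Gamma$ fails to fix one of these orbits, i.e.\ that orbit avoids $Z=X^\Gamma$; the corresponding extreme-ray index $j$ is therefore good and satisfies $v_j\neq 0$.

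Granting the claim, cover $V\setminus Z_V$ by the $T$-stable opens $\{v_j\neq 0\}\cap(V\setminus Z_V)$ for $j$ good, together with the open $T$-stable set $(V\setminus Z_V)\setminus X$, on which $\iota_\ast\CF$ vanishes. On each $\{v_j\neq 0\}$ the projection to the $j$-th coordinate is a $T$-equivariant map onto the one-dimensional orbit $\DC^\times_{\chi_j}$, so by Lemma~\ref{lemma-alphaann} the character $\chi_j$ annihilates $\mathbb{H}_K^\bullet$ of every object of $D_K^b(\{v_j\neq 0\}\cap(V\setminus Z_V),k)$, and likewise on all finite intersections. Iterating Proposition~\ref{prop:MaVe} over this cover shows that $\mathbb{H}_K^\bullet(\iota_\ast\CF)=\mathbb{H}_K^\bullet(\CF)$ is annihilated by a power of $\prod_{j\text{ good}}\chi_j$, and since each $\chi_j$ with $j$ good lies in $P^Z$, this product divides a power of $s^Z$. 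The remaining delicate points are the combinatorial claim above (and the analogous reasoning for the fixed-point-free intersections produced in the first reduction), together with the torsion bookkeeping needed when a weight $\chi_j$ is a proper multiple of the primitive character $\alpha_E$, which is where hypothesis (A3) enters.
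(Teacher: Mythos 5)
Your overall strategy parallels the paper's: reduce to the connected affine attractive case via (A2) and Mayer--Vietoris, linearize $X$ inside a $T$-module $V$, then cover $V$ (minus the fixed locus) by coordinate opens $\{v_j\neq 0\}$ and apply Lemma~\ref{lemma-alphaann} and Proposition~\ref{prop:MaVe}. The crucial difference is in how $V$ is chosen, and this is where your argument has a genuine gap. The paper uses Brion's construction of a \emph{finite} $T$-equivariant map $\pi\colon X\to V=\bigoplus_E\DC_{\alpha_E}$, where each summand $\pi_E$ is a regular function restricting to an isomorphism on $\ol E\cong\DC_{\alpha_E}$; by design the weights of $V$ are \emph{exactly the primitive characters} $\alpha_E$, and finiteness plus equivariance gives $\pi^{-1}(V^\Gamma)=X^\Gamma$, so one is reduced directly to Lemma~\ref{lem:vec} on $V'=V/V^\Gamma$, whose weights form precisely $P^Z$. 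You instead embed $X$ into its tangent space, whose weights $\chi_j$ are a priori unrelated to the $\alpha_E$. Your combinatorial argument does produce, for each extreme ray $j$ of $C_J$ and each $v\in X\setminus V^\Gamma$, a one-dimensional orbit $E$ with $E\not\subseteq Z$; but what it actually shows is that $\alpha_E$ is a divisor of $\chi_j$ (one has $\ker\alpha_E=\bigcap_{j'}\ker\chi_{j'}$ over the $j'$ with $\chi_{j'}$ proportional to $\chi_j$, forcing $\chi_j=m\alpha_E$ for some integer $m\geq 1$), not that $\chi_j=\alpha_E$. If $m>1$, that $j$ is not ``good'' in your sense and your covering claim fails; if you relax ``good'' to proportionality, your Mayer--Vietoris step yields annihilation by a power of $\prod_j \chi_j=\prod_j m_j\alpha_{E_j}$, which does \emph{not} divide any power of $s^Z=\prod_{\alpha_E\in P^Z}\alpha_E$ when some $m_j$ is a non-unit of $k$ (for example $M=S_k/(m\alpha)$ is killed by $m\alpha$ but by no power of $\alpha$). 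Hypothesis (A3) only says $\alpha_E\neq 0$ in $S_k$ and does not repair this; (A4b), which does control divisibility, is not assumed at this point in the paper. So the ``torsion bookkeeping'' you flag as a delicate point is in fact the place where the argument as written fails, and Brion's map is precisely the device that makes the issue disappear.

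A smaller remark: your reduction to the affine case asserts intersections $U_i\cap U_j$ of distinct charts are fixed-point free and suggests this helps; it does not directly, since $Z$ can meet such intersections. The clean way (implicit in the paper) is to push forward along $U_{i_1}\cap\dots\cap U_{i_k}\setminus Z\hookrightarrow U_{i_1}\setminus Z$ and apply the affine case on $U_{i_1}$, noting that $s^{Z\cap U_{i_1},U_{i_1}}$ divides $s^Z$.
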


\begin{proof} First we assume that $X$ is affine and connected and
  contains an attractive fixed point. In this case $Z$ is necessarily
  of the form $X^{\Gamma}$ for a closed subtorus $\Gamma \subset T$. 
  We recall an argument due to
  Brion (cf. \cite[Proposition 3.2.1-1]{Ar3}, or the proof of Theorem
  17 in \cite{B}) which constructs a finite $T$-equivariant map
  \[ \pi: X \to V, \] where $V$ is a $T$-module with weights
  corresponding bijectively to the one-dimensional orbits of $T$ in
  $X$. Brion's construction is as follows:

  For each one-dimensional orbit $E \subset X$, $\overline{E}$ is
  smooth and hence isomorphic, as a $T$-space, to
  $\mathbb{C}_{\alpha_E}$. For each such orbit we may find a regular
  function $\pi_E : X \to \mathbb{C}_{\alpha_E}$ such that the
  restriction of $\pi_E$ to $\overline{E}$ is an equivariant 
  isomorphism of affine
  spaces.  Taking the direct sum over all such $\pi_E$ yields a map
  \[ X \stackrel{\pi}{\to} V := \bigoplus_E \mathbb{C}_{\alpha_E}. \]
  We claim that $\pi$ is finite. Because $x \in X$ is attractive, we
  can find a rank one subtorus of $T$ inducing a positive grading on
  the regular functions on $X$. By the graded Nakayama lemma $\pi$ is
  finite if and only if $\pi^{-1}(0)$ is finite.  If $\pi^{-1}(0)$ is
  not finite, then it contains a one-dimensional $T$-orbit (again by
  the attractiveness of $x$), but this contradicts the construction.

  Now let $V^\Gamma\subset V$ be the subspace of $\Gamma$-fixed
  points. Because each fibre of $\pi$ is finite and $\pi$ is
  equivariant it follows that
  $\pi^{-1}(V^{\Gamma}) = X^{\Gamma}$. Choose a decomposition
  \[ V = V^{\prime} \oplus V^{\Gamma} \] of $T$-modules and let $V \to
  V^{\prime}$ denote the projection.  We get an induced map
  \[ \pi^{\prime}: X \setminus X^{\Gamma} \to V^{\prime} \setminus \{
  0 \} \] and the result follows from Lemma \ref{lem:vec} because
  \[ \mathbb{H}_K^\bullet(\mathcal{F}) \cong
  \mathbb{H}_K^\bullet(\pi^{\prime}_* \mathcal{F}). \] Hence we proved
  the lemma in the case of affine $X$.

  By our assumption (A2), the general case follows from the
  Mayer--Vietoris sequence.
\end{proof}

\begin{lemma} \label{lem:limit} For any equivariant sheaf
$\mathcal{F} \in D^b_K(X,k)$ we have an isomorphism
\[ \mathbb{H}_K^\bullet(\mathcal{F}_{Z}) \cong \lim_{\to}
\mathbb{H}_K^\bullet(\mathcal{F}_{U}), \] where the direct limit takes
place over all $K$-stable open neighbourhoods $U$ of $Z$.
\end{lemma}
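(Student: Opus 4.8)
The plan is to realize $\mathbb{H}_K^\bullet(\mathcal{F}_Z)$ as a colimit of the equivariant cohomologies of the restrictions $\mathcal{F}_U$ by combining a cofinality argument for the neighbourhood system with the continuity properties of sheaf cohomology. First I would observe that, since $Z = X^{\Gamma_1} \sqcup \dots \sqcup X^{\Gamma_n}$ is a closed subspace of the (locally compact, metrizable) complex variety $X$, it possesses a fundamental system of open neighbourhoods, and moreover one can take these to be $K$-stable: starting from any open neighbourhood $U$ of the $K$-stable set $Z$, one can shrink $U$ to a $K$-stable one by an averaging/saturation argument over the \emph{compact} group $K$ (this is exactly why the lemma is stated for $K$ rather than $T$ — the saturation $K \cdot U'$ of a suitably small $U'$ stays inside $U$ by compactness of $K$). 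Thus the directed system of $K$-stable open neighbourhoods $U \supseteq Z$ is cofinal in the system of all open neighbourhoods of $Z$.

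The second and main step is to commute the direct limit past equivariant cohomology. Recall that $\mathbb{H}_K^\bullet(\mathcal{G}) = H^\bullet(X_K, \mathcal{G})$ computed on the Borel construction $X_K = X \times_K EK$ (modelled as a colimit of the finite approximations $X \times_K (\mathbb{C}^N \setminus \{0\})^r$ of Section \ref{sec:torus}), and that for each such approximation the space is a genuine locally compact variety. For a closed subspace $A$ of a locally compact space $Y$ with a fundamental system of neighbourhoods, one has $H^\bullet(A, \mathcal{G}|_A) \cong \varinjlim_{U \supseteq A} H^\bullet(U, \mathcal{G}|_U)$ — this is the standard continuity of sheaf cohomology along a cofinal system of neighbourhoods of a closed set (see e.g. \cite{KS1}). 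Applying this on each finite-dimensional approximation $X \times_K (\mathbb{C}^N\setminus\{0\})^r$ to the closed subset $Z \times_K (\mathbb{C}^N\setminus\{0\})^r$, whose $K$-stable open neighbourhoods are precisely the $U \times_K (\mathbb{C}^N\setminus\{0\})^r$ for $U$ running over $K$-stable open neighbourhoods of $Z$ in $X$, and then passing to the limit over $N$ (direct limits commute, and cohomology of $X_K$ is the limit of cohomology of the approximations in each fixed degree by the usual Milnor-sequence argument, with the relevant $\varprojlim^1$ terms vanishing since everything is eventually stable in each degree), yields the claimed isomorphism.

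Finally, I would check naturality: the maps in the direct system on the right-hand side are the restriction homomorphisms $\mathbb{H}_K^\bullet(\mathcal{F}_U) \to \mathbb{H}_K^\bullet(\mathcal{F}_{U'})$ for $U \supseteq U'$, and the isomorphism with $\mathbb{H}_K^\bullet(\mathcal{F}_Z)$ is compatible with the further restriction maps $\mathbb{H}_K^\bullet(\mathcal{F}_U) \to \mathbb{H}_K^\bullet(\mathcal{F}_Z)$, so the colimit cocone is the correct one. The main obstacle is the interchange of the two limits — the colimit over neighbourhoods of $Z$ and the (homotopy) colimit defining $EK$ — together with justifying continuity of sheaf cohomology in the infinite-dimensional Borel construction; the clean way around this is precisely to do everything on the finite-dimensional approximations, where continuity of cohomology along neighbourhoods of a closed subset is classical, and only then take $N \to \infty$, using that in any fixed cohomological degree the approximations stabilize so no lim$^1$ obstruction appears. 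One should also be slightly careful that "$K$-stable open neighbourhoods of $Z$" is genuinely directed (closed under finite intersection, which is clear) and non-empty, and that the identification $\res_K^T$ does not affect the statement since $\mathbb{H}_T^\bullet \cong \mathbb{H}_K^\bullet$ canonically.
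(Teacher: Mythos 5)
Your proof takes a genuinely different route from the paper's. The paper does \emph{not} reduce to the finite-dimensional approximations $X\times_K(\DC^N\setminus\{0\})^r$; instead it establishes that $X_K$ itself is paracompact --- by writing $EK$ as a countable union of finite-dimensional manifolds, concluding $X_K$ is $\sigma$-compact and regular, and citing \cite{MilnorStasheff,Dug} --- and then applies \cite[Remark 2.6.9]{KS1} \emph{directly} on $X_K$, once it has observed that the $(U_i)_K$ form a fundamental system of open neighbourhoods of $Z_K$ in $X_K$. This avoids the double-limit issue entirely. Your approach, by contrast, applies continuity of cohomology on each approximation $X_K^N$ and then interchanges the filtered colimit over $U$ with the $N$-limit. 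Two points in your argument need more care than you give them. First, the $N$-limit computing $\mathbb{H}^\bullet_K$ from the approximations is an \emph{inverse} limit (with a potential $\varprojlim^1$), so "direct limits commute'' is not what you need; rather you need uniform stabilization in each degree (uniform in $U$), which is plausible for bounded $\CF$ but not actually checked. Second, the assertion that the $K$-stable open neighbourhoods of $Z\times_K(\DC^N\setminus\{0\})^r$ in $X_K^N$ are "precisely'' the $U\times_K(\DC^N\setminus\{0\})^r$ is false; what you need is that these are \emph{cofinal}, and since $(\DC^N\setminus\{0\})^r$ is not compact (and neither is $Z$ in general), this does not follow from a naive tube-lemma argument and deserves justification. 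What the paper's route buys is a single clean invocation of the continuity theorem after a standard paracompactness check; what your route would buy, if completed, is avoiding any point-set topology of the infinite-dimensional Borel construction, at the cost of a more delicate limit-interchange.
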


\begin{proof}
  By assumption $X$ has a covering by open subvarieties, all
  isomorphic to closed subvarieties of affine spaces with linear
  $T$-actions.  Thus we may choose a basis of open neighbourhoods $\{
  U_i\}_{i \in I}$ of $Z$ which are $K$-stable. (This is
  where we need the compactness of $K$.)

  Now we may write $EK$ as a countable direct limit of (finite
  dimensional) manifolds with free $K$-action (for example, by taking
  $EK = ET$ as in Section \ref{sec:torus}). Hence $X_K$ can be written
  as a countable union of compact subsets. Because $X_K$ is regular,
  we conclude that $X_K$ is paracompact (cf. \cite[Section
  5.8]{MilnorStasheff} and \cite[Theorem 6.5]{Dug}).  It is
  straightforward to see that $\{(U_i)_K\}_{i\in I}$ give a basis of open
  neighbourhoods of $Z_K$. It then follows from \cite[Remark
  2.6.9]{KS1} that we have an isomorphism
\[
\mathbb{H}_K^{\bullet}(\mathcal{F}_{Z}) =
\mathbb{H}^{\bullet}(\mathcal{F}_{Z_K})
\cong \lim_{\to} \mathbb{H}^{\bullet}(\mathcal{F}_{(U_i)_K})
= \lim_{\to} \mathbb{H}_K^\bullet(\mathcal{F}_{U})
\]
as claimed. \end{proof}



Now we are ready to prove Theorem \ref{theorem-loc1}.

\begin{proof} Let $\mathcal{F} \in D^b_T(X,k)$ and assume that
$\mathbb{H}_T^\bullet(\mathcal{F})$ is free as an $S_k$-module. We have to
show that the restriction map
\[ \mathbb{H}^{\bullet}_K(\mathcal{F}) \to
\mathbb{H}^{\bullet}_K(\mathcal{F}_{Z})
\]
is injective, and becomes an isomorphism after
inverting $s^{Z}$.

Let $U$ be an open $K$-stable neighbourhood
of $Z \subset X$.
We have inclusions
\[ U \stackrel{j}{\hookrightarrow} X \stackrel{i}{\hookleftarrow}
X \setminus U \]
and hence a distinguished triangle:
\[ i_!i^! \mathcal{F} \to \mathcal{F} \to j_* j^* \mathcal{F}
\stackrel{[1]}{\to}. \] Applying Lemma \ref{lem:annglobal} (and
remembering that $i_* \cong i_!$) we deduce that
$\mathbb{H}_{K}^\bullet(i_!i^! \mathcal{F})$ is annihilated by a power
of $s^Z$. As $\mathbb{H}^{\bullet}_{K}(\mathcal{F})$ is free,
the restriction map $\mathbb{H}^{\bullet}_{K}(\mathcal{F}) \to
\mathbb{H}^{\bullet}_{K}(\mathcal{F}_U)$ is injective. It also follows
that it becomes an isomorphism after inverting $s^Z$.

To finish the proof, note that, by Lemma \ref{lem:limit},
\[
\mathbb{H}^{\bullet}_{K}(\mathcal{F}_{Z}) \cong \lim_{\to}
\mathbb{H}^{\bullet}_{K}(\mathcal{F}_U).
\]
Because $\mathbb{H}^{\bullet}_{K}(\mathcal{F}) \to
\mathbb{H}^{\bullet}_{K}(\mathcal{F}_U)$ is injective for all $U$ it follows that
$\mathbb{H}^{\bullet}_{K}(\mathcal{F}) \to
\mathbb{H}^{\bullet}_{K}(\mathcal{F}_{Z})$ is injective. Lastly, this map becomes
an isomorphism after inverting $s^{Z}$ because the direct
limit commutes with tensor products.
\end{proof}

\section{The image of the localisation homomorphism}

We are now going to describe the image of the localisation homomorphism under a
certain further restriction on the ring $k$ which is called the {\em GKM-condition}. For this it is convenient to use
the language of sheaves on moment graphs. We start by recalling
the main definitions and constructions in the theory of moment
graphs. In particular, we define the $\DZ$-graded category $\CG\catmod^\DZ_k$ of
$k$-sheaves on a moment graph $\CG$ and associate to any such sheaf
$\SF$ its space of global sections $\Gamma(\SF)$.

To a $T$-space $X$ with finitely many zero- and one-dimensional
orbits we associate a moment graph $\CG_X$ and define a functor
$$
\DW\colon D_T^b(X,k)\to\CG_X\catmod^\DZ_k
$$
between $\DZ$-graded categories.
We then show that
under some assumptions on $\CF\in D_T^b(X,k)$,  the equivariant cohomology of $X$ with coefficients in $\CF$
 coincides with the space of global sections  of  $\DW(\CF)$,
 i.e.
$$
\THypgr(\CF)=\Gamma(\DW(\CF)).
$$

\subsection{Sheaves on moment graphs}
 Let $Y\cong \DZ^r$ be a lattice of finite rank.

\begin{definition} An (unordered) {\em moment graph} $\CG$ over $Y$ is given by the following data:
\begin{itemize}
\item A graph $(\CV,\CE)$ with set of vertices $\CV$ and set of edges $\CE$.
\item A map $\alpha\colon \CE\to Y\setminus\{0\}$.
\end{itemize}
\end{definition}
We assume that two vertices of a moment graph are connected by at most one edge.

Let $\CG=(\CV,\CE,\alpha)$ be a moment graph. We write $E\colon x\linie y$ for an edge $E$ that connects the vertices $x$ and $y$. If we also want to denote the label $\alpha=\alpha(E)$ of $E$, then we write $E\colon x\stackrel{\alpha}\llinie y$. As before we denote by $S_k=S(Y\otimes_\DZ k)$ the symmetric algebra of $Y$ over $k$, which  we consider as a graded algebra with $Y\otimes_\DZ k$ sitting in degree $2$. 

\begin{definition}
A {\em $k$-sheaf} $\SM$ on a moment graph $\CG$ is given by the following data:
\begin{itemize}
\item a graded $S_k$-module $\SM^x$ for any vertex $x\in\CV$,
\item a graded $S_k$-module $\SM^E$ with $\alpha(E)\SM^E=0$ for any $E\in\CE$,
\item a homomorphism $\rho_{x,E}\colon \SM^x\to \SM^E$ of graded $S_k$-modules for any vertex $x$ lying on the edge $E$.
\end{itemize}
\end{definition}

For a $k$-sheaf $\SM$ on $\CG$ and $l\in\DZ$ we denote by $\SM[l]$ the shifted $k$-sheaf with stalks $\SM[l]^x=(\SM^x)[l]$, $\SM[l]^E=(\SM^E)[l]$ and shifted $\rho$-homomorphisms. A morphism $f\colon \SM \to \SN$ between $k$-sheaves $\SM$ and $\SN$ on $\CG$ is given by a collection of homomorphisms of graded $S_k$-modules $f^x\colon \SM^x\to \SN^x$ and $f^E\colon \SM^E\to \SN^E$ for all vertices $x$ and edges $E$ that are compatible with the maps $\rho$, i.e. such that the diagram

\centerline{
\xymatrix{
\SM^x\ar[d]_{\rho^\SM_{x,E}}\ar[r]^{f^x}&\SN^x\ar[d]^{\rho^\SN_{x,E}}\\
\SM^E\ar[r]^{f^E}&\SN^E}
}
\noindent
commutes for all vertices $x$ that lie on the edge $E$. We denote by $\CG\catmod_k^\DZ$ the category whose objects are $k$-sheaves on $\CG$ and whose morphisms are the morphisms between $k$-sheaves. It is $\DZ$-graded by the functor $\SM\mapsto\SM[1]$.

\subsection{Sections of sheaves and the structure algebra}
The {\em structure algebra} over $k$ of a moment graph $\CG$ is
$$
\CZ_k=\left\{(z_x)\in\prod_{x\in\CV}S_k\left|
\begin{matrix}
\text{ $z_x\equiv z_y\mod \alpha(E)$ }\\
\text{ for all edges $E\colon x\linie y$ }
\end{matrix}
\right\}.
\right.
$$
Coordinatewise addition and multiplication makes $\CZ_k$ into an
$S_k$-algebra. It is $\DZ$-graded if we consider the product in the definition in the graded sense.

Let $\SM$ by a $k$-sheaf on $\CG$. For any subset $\CI$ of $\CV$ we
define the {\em space of sections} of $\SM$ over $\CI$ by
$$
\Gamma(\CI,\SM):=\left\{(m_x)\in\prod_{x\in\CI} \SM^x\left|\begin{matrix}
      \rho_{x,E}(m_x)=\rho_{y,E}(m_y) \\
\text{ for all edges $E\colon x\linie y$}\\
\text{ with $x,y\in\CI$}
\end{matrix}\right.\right\}.
$$
Coordinatewise multiplication makes $\Gamma(\CI,\SM)$ into a
$\CZ_k$-module (as $\alpha(E)\rho_{x,E}(m_x)=0$ for any edge $E$ with
vertex $x$). Again it is a graded module when the product is taken in the category of graded $S_k$-modules.

We call the space $\Gamma(\SM):=\Gamma(\CV,\SM)$ the space of {\em global sections}. If $\CI\subset \CJ$ are two subsets of $\CV$, then the canonical projection $\bigoplus_{x\in \CJ}\SM^x\to \bigoplus_{x\in \CI}\SM^x$ induces a restriction map $\Gamma(\CJ,\SM)\to\Gamma(\CI,\SM)$.
 
\subsection{The costalks of a sheaf}
Let $\SM$ be a $k$-sheaf on $\CG$ and let $x$ be a vertex. Then we define the {\em costalk} $\SM_x$ of $\SM$ at $x$ to be the $S_k$-module
$$
\SM_x:=\{ m \in \SM^x\mid \rho_{x,E}(m)=0 \text{ for all edges $E$ that contain $x$}\}.
$$
We can identify $\SM_x$ in an obvious way with the kernel of the restriction homomorphism $\Gamma(\CV,\SM)\to\Gamma(\CV\setminus\{x\},\SM)$.

\subsection{The moment graph associated to a $T$-variety}\label{subsec-mgfromvar}

To a complex $T$-variety $X$ satisfying (A1) we associate the following moment graph $\CG_X=(\CV,\CE,\alpha)$ over the lattice $X^\ast(T)$:

\begin{itemize}
\item We set $\CV:=X^T$.
\item The vertices $x$ and $y$, $x\ne y$, are connected by an edge if there is a one-dimensional orbit $E$ such that $\ol E=E\cup\{x, y\}$. We denote this edge by $E$ as well.
\item We let $\alpha(E)=\alpha_E\in X^\ast(T)$ be the chosen character.
\end{itemize}

Note that only those one-dimensional orbits $E$ in $X$ give rise to an
edge that pick up two distinct fixed points in their
closure.

 \subsection{The functor $\DW$}
Suppose that $E\subset X$ is a one-dimensional $T$-orbit, and suppose that $x\in \ol E$ is a fixed point in its closure. Let $\CF$ be an object in $D^b_T(X,k)$. Then the restriction homomorphism
$$
\THypgr(\CF_{E\cup\{x\}})\to \THypgr(\CF_x)
$$
is an isomorphism by the attractive Proposition \ref{prop:attr}.
Hence we can define a homomorphism $\rho_{x,E}$ from $\THypgr(\CF_x)$ to $\THypgr(\CF_E)$ by composing the inverse of the above homomorphism with the restriction homomorphism $\THypgr(\CF_{E\cup\{x\}}) \to \THypgr(\CF_E)$:
$$
\rho_{x,E}\colon \THypgr(\CF_x)\stackrel{\sim}\leftarrow\THypgr(\CF_{E\cup\{x\}})\to \THypgr(\CF_{E}).
$$

Now we can define the functor $\DW$.
To an equivariant sheaf $\CF\in D^b_T(X,k)$ on $X$ we associate the following $k$-sheaf $\DW(\CF)$ on $\CG_X$:
\begin{itemize}
\item For a vertex $x\in \CV$ we set $\DW(\CF)^x:=\THypgr(\CF_{x})$.
\item For a one-dimensional orbit $E$ we set
  $\DW(\CF)^E:=\THypgr(\CF_E)$ (note that $\alpha_E\THypgr(\CF_E)=0$
  by Lemma \ref{lemma-alphaann}).
\item In case that $x\in \ol E$ we let $\rho_{x,E}\colon\DW(\CF)^x\to\DW(\CF)^E$ be the map constructed above.
\end{itemize}
This construction clearly extends to a functor $\DW\colon D^b_T(X,k)\to \CG_X\catmod_k^{\mathbb{Z}}$.

\subsection{The case $X=\DP^1$}

 Suppose that $T$ acts linearly on $\DP^1$ via a non-trivial character $\alpha$.  In this case the moment graph is 
 $$
 0\stackrel{\alpha}\linie \infty.
 $$
 For $\CF\in D_T^b(\DP^1,k)$  the sheaf $\DW(\CF)$ consists of the stalks $\THypgr(\CF_0)$, $\THypgr(\CF_\infty)$ and the space $\THypgr(\CF_{\DC^\times})$ together with the maps
 $$
 \THypgr(\CF_0)\stackrel{\rho_{0,\DC^\times}}\longrightarrow \THypgr(\CF_{\DC^\times})\stackrel{\rho_{\infty,\DC^\times}}\longleftarrow \THypgr(\CF_\infty).
 $$
A consequence of the Mayer--Vietoris sequence is the following lemma.

\begin{lemma}\label{lemma-explP1}  Let $\CF\in D_T^b(\DP^1,k)$.  Then the image of the restriction homomorphism $\THypgr(\CF)\to \THypgr(\CF_0)\oplus\THypgr(\CF_\infty)$ is
$\{(z_0,z_\infty)\mid
\rho_{0,\DC^\times}(z_0)=\rho_{\infty,\DC^\times}(z_\infty)\}$.
\end{lemma}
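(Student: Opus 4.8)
The plan is to deduce the lemma directly from the equivariant Mayer--Vietoris sequence of Proposition \ref{prop:MaVe} applied to the standard open cover of $\DP^1$. First I would set $U=\DP^1\setminus\{\infty\}\cong\DC$ and $V=\DP^1\setminus\{0\}\cong\DC$, so that $U\cap V=\DC^\times$; both are open and $T$-stable. The Mayer--Vietoris sequence then reads
$$
\cdots\to\THyp^{j-1}(\CF_{\DC^\times})\to\THyp^j(\CF)\to\THyp^j(\CF_U)\oplus\THyp^j(\CF_V)\stackrel{\delta}\to\THyp^j(\CF_{\DC^\times})\to\cdots,
$$
and from exactness the image of $\THyp^\bullet(\CF)\to\THyp^\bullet(\CF_U)\oplus\THyp^\bullet(\CF_V)$ is exactly the kernel of the difference map $\delta\colon(a,b)\mapsto a|_{\DC^\times}-b|_{\DC^\times}$.

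Next I would identify the two restriction maps $\THyp^\bullet(\CF_U)\to\THyp^\bullet(\CF_{\DC^\times})$ and $\THyp^\bullet(\CF_V)\to\THyp^\bullet(\CF_{\DC^\times})$ with the maps $\rho_{0,\DC^\times}$ and $\rho_{\infty,\DC^\times}$ occurring in the statement. This is where the attractive Proposition \ref{prop:attr} enters: $U\cong\DC$ is connected and affine with the attractive fixed point $0$ (since $T$ acts linearly through the nontrivial character $\alpha$, hence through $-\alpha$ or $\alpha$ — in either case with all tangent weights in an open half-space), so by Proposition \ref{prop:attr} the restriction $\THyp^\bullet(\CF_U)\to\THyp^\bullet(\CF_0)$ is an isomorphism. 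Similarly $\THyp^\bullet(\CF_V)\to\THyp^\bullet(\CF_\infty)$ is an isomorphism. Moreover $U$ is itself the smallest $T$-stable open neighbourhood of $0$, so the composite $\THyp^\bullet(\CF_U)\xrightarrow{\sim}\THyp^\bullet(\CF_0)\xrightarrow{\rho_{0,\DC^\times}}\THyp^\bullet(\CF_{\DC^\times})$ agrees, by the very definition of $\rho_{0,\DC^\times}$ in Section \ref{subsec-mgfromvar} (where one uses $E\cup\{x\}=\DC^\times\cup\{0\}=U$), with the Mayer--Vietoris restriction map $\THyp^\bullet(\CF_U)\to\THyp^\bullet(\CF_{\DC^\times})$; likewise at $\infty$. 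Under these identifications the map $\THyp^\bullet(\CF)\to\THyp^\bullet(\CF_0)\oplus\THyp^\bullet(\CF_\infty)$ becomes the composite of the Mayer--Vietoris map with the two isomorphisms, so its image is the set of pairs $(z_0,z_\infty)$ with $\rho_{0,\DC^\times}(z_0)=\rho_{\infty,\DC^\times}(z_\infty)$, as claimed.

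The only genuinely nontrivial point is the compatibility of the Mayer--Vietoris connecting/restriction maps with the maps $\rho_{x,E}$ defined via Proposition \ref{prop:attr}, i.e.\ checking that the relevant triangle of restriction homomorphisms commutes. This is a formal consequence of the functoriality of the adjunction morphisms $\id\to i_\ast i^\ast$ under composition of inclusions $\{x\}\hookrightarrow E\cup\{x\}\hookrightarrow\DP^1$ and the compatibility of these with the Mayer--Vietoris boundary; I would simply note this and point to the naturality already used implicitly in defining $\DW$. Everything else is bookkeeping with the long exact sequence.
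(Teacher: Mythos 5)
Your proof is correct and is exactly what the paper has in mind: the paper merely remarks that the lemma is ``a consequence of the Mayer--Vietoris sequence'' and you have supplied precisely the omitted details, using the open cover $U=\DP^1\setminus\{\infty\}$, $V=\DP^1\setminus\{0\}$ together with the attractive Proposition \ref{prop:attr} to identify the Mayer--Vietoris restriction maps with $\rho_{0,\DC^\times}$ and $\rho_{\infty,\DC^\times}$, the key point being that $E\cup\{x\}=\DC^\times\cup\{0\}=U$ is exactly the space appearing in the definition of $\rho_{x,E}$ in Section \ref{subsec-mgfromvar}.
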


\subsection{The localisation theorem -- part II}\label{subsec-locpartII} Now we assume that $X$ satisfies the assumptions (A1), (A2) and (A3). Let $\CF\in D_T^b(X,k)$.  If
$\THyp^\bullet(\CF)$ is a free $S_k$-module, then Theorem \ref{theorem-loc1} shows that
we can view $\THyp^\bullet(\CF)$ as a submodule of
$\bigoplus_{x\in X^T}\THyp^\bullet(\CF_x)=\bigoplus_{x\in
  X^T}\DW(\CF)^x$. The space of global sections $\Gamma(\DW(\CF))$ is
a submodule of this direct sum as well. In this section we want to prove that these two
submodules coincide.

We need some more notation.  For $\alpha\in X^\ast(T)$ let us
define $X^{\alpha}$ to be the subvariety of all $T$-fixed points
in $X$ and all one-dimensional orbits $E\subset X$ such that
$k\alpha\cap k\alpha_E\ne 0$.
%
Then $X^{0}=X^T$ for all rings $k$, but  in general $X^{\alpha}$ depends on the
ring $k$. We define
$$
P^{\alpha} :=\left\{\alpha_E\in X^\ast(T)\left|\,
\begin{matrix}
 \text{$E$ is a one-dimensional}\\
\text{$T$-orbit in $X\setminus X^{\alpha}$}
\end{matrix}
\right.
\right\}
$$
and 
$$
s^{\alpha}:=\prod_{\alpha_E\in P^{\alpha}} \alpha_E \in S_k.
$$

We need some additional assumptions on our data:
\begin{itemize}
\item[(A4a)] For any $\alpha\in X^\ast(T)$ the space $X^{\alpha}$
  is a disjoint union of points and $\DP^1$'s.
\item[(A4b)] If  $E$ is a  one-dimensional $T$-orbit and  $n\in\DZ$ is
  such that $\alpha_E$ is divisible by $n$ in $X^\ast(T)$, then $n$ is
  invertible in $k$.
\end{itemize}
Note that (A4a) and (A4b) imply that the greatest common divisor of
$s^{\alpha}$ for all $\alpha\in X^\ast(T)$ is $1$. For the proof of the
next theorem we will only need this  fact, but we need the stronger
statements (A4a) and (A4b) later.
Note also that (A4a) guarantees that we can apply Theorem
\ref{theorem-loc1} with $Z = X^{\alpha}$ and $s^Z = s^\alpha$.

Let $\CG_X$ be the moment graph associated to $X$. For $\alpha\in
X^\ast(T)$ we denote by $\CG_X^\alpha$ the moment graph obtained from
$\CG_X$ by deleting all edges $E$ with $k\alpha_E\cap k\alpha=0$. Then
(A4a) is equivalent to:
\begin{itemize}
\item[(A4a)$^\prime$] The moment graph $\CG_X^{\alpha}$ is a
  (discrete) union of moment graphs with only one or two vertices.
\end{itemize}
Now we can
state the second part of the localisation theorem.

\begin{theorem}\label{theorem-loc2} Suppose that (A1), (A2), (A3),
  (A4a) and (A4b) hold. Let $\CF\in D_T^b(X,k)$ and suppose that
  $\THyp^\bullet(\CF)$ and $\THyp^\bullet(\CF_{X^T})$ are free
  $S_k$-modules. Then
$$
\THyp^\bullet(\CF)=\Gamma(\DW(\CF))
$$
as submodules of $\bigoplus_{x\in X^T}\THyp^\bullet(\CF_x)=\bigoplus_{x\in
  X^T}\DW(\CF)^x$.
\end{theorem}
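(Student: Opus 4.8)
The plan is to prove the two submodules of $\bigoplus_{x\in X^T}\DW(\CF)^x$ coincide by a double inclusion, exploiting Theorem \ref{theorem-loc1} with the various choices $Z = X^\alpha$ simultaneously.

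First I would establish the inclusion $\THyp^\bullet(\CF)\subseteq\Gamma(\DW(\CF))$. This is the ``easy'' direction and essentially local: an element $(z_x)\in\THyp^\bullet(\CF)$ must satisfy $\rho_{x,E}(z_x)=\rho_{y,E}(z_y)$ for each edge $E\colon x\linie y$. To see this, restrict $\CF$ to $\ol E\cong\DP^1$ (using (A1), so $\ol E=E\cup\{x,y\}$) and apply Lemma \ref{lemma-explP1}: the image of $\THyp^\bullet(\CF_{\ol E})\to\THyp^\bullet(\CF_x)\oplus\THyp^\bullet(\CF_y)$ is exactly the set of pairs with matching $\rho$-images. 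Since the restriction $\THyp^\bullet(\CF)\to\THyp^\bullet(\CF_x)\oplus\THyp^\bullet(\CF_y)$ factors through $\THyp^\bullet(\CF_{\ol E})$, the relations hold, giving the inclusion.

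For the reverse inclusion $\Gamma(\DW(\CF))\subseteq\THyp^\bullet(\CF)$, the idea is to glue local information. Fix $\alpha\in X^\ast(T)$. By (A4a)$'$, the moment graph $\CG_X^\alpha$ is a disjoint union of one- and two-vertex graphs, corresponding to the connected components of $X^\alpha$ (points and $\DP^1$'s). Applying Theorem \ref{theorem-loc1} with $Z=X^\alpha$ (valid by (A4a)) shows $\THyp^\bullet(\CF)\hookrightarrow\THyp^\bullet(\CF_{X^\alpha})$ with image containing all of $\THyp^\bullet(\CF_{X^\alpha})\otimes_{S_k}S_k[1/s^\alpha]$; and one checks, using Lemma \ref{lemma-explP1} componentwise on the $\DP^1$'s, that $\Gamma(\DW(\CF))$ restricts into $\THyp^\bullet(\CF_{X^\alpha})$ as well — indeed the defining relations of $\Gamma(\DW(\CF))$ include precisely the edge-relations of $\CG_X^\alpha$. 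So given $z\in\Gamma(\DW(\CF))$, for each $\alpha$ its image in $\THyp^\bullet(\CF_{X^\alpha})$ lies in the localised module $\THyp^\bullet(\CF)\otimes_{S_k}S_k[1/s^\alpha]$, hence $s^\alpha$ times a suitable lift lands in $\THyp^\bullet(\CF)$; more precisely $z$, viewed inside $\THyp^\bullet(\CF)\otimes_{S_k}S_k[1/s^\alpha]$ (using freeness of both modules so that $\THyp^\bullet(\CF)\hookrightarrow\THyp^\bullet(\CF)\otimes S_k[1/s^\alpha]$), can be written with denominator dividing a power of $s^\alpha$. Ranging over all $\alpha$ and invoking the remark after (A4b) that $\gcd\{s^\alpha\}=1$, a partition-of-unity/Bézout argument in $S_k$ forces $z\in\THyp^\bullet(\CF)$ itself.

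The main obstacle I expect is the last gluing step: making rigorous that an element lying in $\THyp^\bullet(\CF)\otimes_{S_k}S_k[1/s^\alpha]$ for \emph{every} $\alpha$, together with $\gcd_\alpha(s^\alpha)=1$, already lies in the unlocalised $\THyp^\bullet(\CF)$. This requires that $\THyp^\bullet(\CF)$ be free (so it injects into each localisation and $\bigcap_\alpha \THyp^\bullet(\CF)[1/s^\alpha]=\THyp^\bullet(\CF)$, using that $S_k$ is a UFD and the $s^\alpha$ have no common factor), and also a compatibility check that the ``$z$ viewed in $\THyp^\bullet(\CF_{X^\alpha})$'' for different $\alpha$ all come from the \emph{same} putative element of the ambient direct sum $\bigoplus_x\DW(\CF)^x$ — which is built into the fact that $z$ was a global section to begin with, and that the restriction maps $\THyp^\bullet(\CF)\to\THyp^\bullet(\CF_{X^\alpha})\to\bigoplus_x\THyp^\bullet(\CF_x)$ are all compatible. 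The freeness hypothesis on $\THyp^\bullet(\CF_{X^T})$ is what guarantees $\bigoplus_x\DW(\CF)^x$ is a nice ($S_k$-free) ambient module in which all these comparisons take place without torsion pathologies.
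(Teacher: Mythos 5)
Your proposal is correct and follows essentially the same route as the paper's proof: identify $\Gamma(\DW(\CF))$ with $\bigcap_{\alpha} r_\alpha(\THypgr(\CF_{X^\alpha}))$ via (A4a) and Lemma~\ref{lemma-explP1}, then localise at each $s^\alpha$ using Theorem~\ref{theorem-loc1} and intersect using that the $s^\alpha$ have trivial $\gcd$. The only cosmetic difference is in the final algebraic step: the paper fixes a basis $e_1,\dots,e_m$ of $\THypgr(\CF)$ and extends the dual basis to linear functionals $\tilde e_j^* : \THypgr(\CF_{X^T}) \to S_k[1/s^0]$, thereby reducing the intersection $\bigcap_\alpha$ to the single equality $\bigcap_\alpha S_k[1/s^\alpha] = S_k$ in $\operatorname{Frac}(S_k)$; your direct module-level argument $\bigcap_\alpha M[1/s^\alpha] = M$ for free $M$ is the same fact applied componentwise, but the dual-basis formulation makes the reduction to the base ring explicit and sidesteps the bookkeeping about which ambient module the intersection lives in that you flagged as the main obstacle.
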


For the proof of the above statement we use similar arguments as the ones given in  \cite{ChangSkjel}, \cite{GKM} or \cite{B}. Again we follow \cite{B} closely. 

\begin{proof}
As a first step let $\CF\in D_T^b(X,k)$ be any sheaf and $\alpha\in
X^\ast(T)$. Let $\Gamma^\alpha(\DW(\CF))$ be the sections of the sheaf
$\DW(\CF)$ on the moment graph $\CG_X^\alpha$ (so we only consider the
edges $E$ with $k\alpha_E \cap k\alpha \ne 0$). By (A3),
$$
\Gamma(\DW(\CF))=\bigcap_{\alpha\in X^\ast(T)}\Gamma^\alpha(\DW(\CF)).
$$
By (A4a),  $X^\alpha$ is a  discrete union of points and $\DP^1$'s. Hence, if we
denote by $r_{\alpha}:\THypgr(\CF_{X^\alpha}) \to \THypgr(\CF_{X^T})$
the restriction map, then Lemma \ref{lemma-explP1} yields $\Gamma^\alpha(\DW(\CF))=r_{\alpha}(\THypgr(\CF_{X^\alpha}))$. Hence:
$$
\Gamma(\DW(\CF))=\bigcap_{\alpha\in X^\ast(T)} r_{\alpha}(\THypgr(\CF_{X^\alpha})).
$$
So we have to show that $\THypgr(\CF)=\bigcap_{\alpha\in X^\ast(T)} r_{\alpha}(\THypgr(\CF_{X^\alpha}))$.

Clearly $\THypgr(\CF)$ is contained in the intersection $\bigcap_{\alpha\in X^\ast(T)} r_{\alpha}(\THypgr(\CF_{X^{\alpha}}))$. Hence it remains to show that if $f \in
\mathbb{H}_T^\bullet(\mathcal{F}_{X^{T}})$ is in $r_\alpha(
\THypgr(\CF_{X^{\alpha}}))$  for all $\alpha\in
X^\ast(T)$, then $f$ is
contained in $\mathbb{H}_T^\bullet(\mathcal{F})$.

By  Theorem \ref{theorem-loc1} the injection $i : \THypgr(\CF)\to \THypgr(\mathcal{F}_{X^T})$ becomes an isomorphism after inverting $s^0$.  
By assumption, $\mathbb{H}_T^\bullet(\mathcal{F})$ is a free $S_k$-module.
We choose a basis $e_1, \dots, e_m$ for
$\mathbb{H}_T^\bullet(\mathcal{F})$ and denote by $e^*_1, \dots, e^*_m \in
\Hom (\mathbb{H}_T^\bullet(\mathcal{F}), S_k)$ the dual basis.
Because $i$ becomes an isomorphism after inverting $s^0$, we can find
$\tilde{e^*_1}, \dots, \tilde{e^*_m} \in
\Hom_{S_k}(\mathbb{H}_T^\bullet(\mathcal{F}_{X^{T}}), S_k[1/s^0])$ such that
$e^*_j = \tilde{e^*_j} \circ i$ for $1 \le j \le m$. Note that
$f$ is
in $\THypgr(\CF)$ if
and only if $\tilde{e^*_j}(f)$ is contained in $S_k$ for $1 \le j \le m$.

By Theorem \ref{theorem-loc1}, the map
\[ \mathbb{H}_T^\bullet(\mathcal{F})
\hookrightarrow \mathbb{H}_T^\bullet(\mathcal{F}_{X^{\alpha}})
\]
becomes an isomorphism after inverting $s^\alpha$. As $f$ is contained in $ \mathbb{H}_T^\bullet(\mathcal{F}_{X^{\alpha}})$, we conclude 
$\tilde{e^\ast_j}(f)\in S_k[1/s^\alpha]$
for any $1\le j\le m$. Hence, 
$$
\tilde{e^\ast_j}(f)\in \bigcap_{\alpha\in X^\ast(T)} S_k[1/s^\alpha].
$$
But $\bigcap_{\alpha\in X^\ast(T)} S_k[1/s^\alpha]=S_k$ as the
greatest common divisor of all $s^\alpha$ is $1$.
Hence $\tilde{e^\ast_j}(f)\in S_k$, 
which is what we wanted to show. 
\end{proof}

\section{Equivariant parity sheaves}\label{sec-eps}

In the following sections we consider equivariant parity sheaves on a stratified variety, which were introduced in \cite{JMW2}. It
turns out that the equivariant cohomology of such a sheaf is free over the symmetric algebra,
so by the results in the previous sections it can be calculated by
moment graph techniques. We determine the
corresponding sheaves on the moment graph explicitely: we show
that these are the sheaves that are constructed by the Braden-MacPherson algorithm (cf.~\cite{BrM}).

For all of the above, we need an additional datum: a stratification of
the variety.

\subsection{Stratified varieties}\label{subs-stratification}
We assume from now that the $T$-variety $X$ is endowed with a stratification
$$
X=\bigsqcup_{\lambda\in\Lambda} X_\lambda
$$
by $T$-stable subvarieties $X_\lambda$. We write $D_{T,
  \Lambda}^b(X,k)$ for the full subcategory of $D_T^b(X,k)$
consisting of objects which are constructible with respect to this
stratification. In addition to the assumptions (A1) and (A2) we assume:
\begin{itemize}
\item[(S1)] For each $\lambda\in\Lambda$ there is a $T$-equivariant isomorphism $X_\lambda\cong\DC^{d_\lambda}$, where $\DC^{d_\lambda}$ carries a linear $T$-action.
\item[(S2)] The category $D_{T,\Lambda}^b(X,k)$ is preserved by under
  Grothendieck-Verdier duality. 
(This is satisfied, for example, if the stratification is Whitney.)
\end{itemize}

By (A1) and (A2) each stratum $X_{\lambda}$ contains a unique fixed point. We denote this fixed point by $x_{\lambda}$.
 
The topology of $X$ gives us a partial order on the set $\Lambda$: We set $\lambda\le \mu$ if and only if $X_\lambda\subset \ol X_\mu$. We use the following notation for an arbitrary partially ordered set $\Lambda$: For $\lambda\in\Lambda$ we set $\{\ge\lambda\}:=\{\nu\in\Lambda\mid\nu\ge\lambda\}$ and we define $\{\le\lambda\}$, $\{>\lambda\}$, etc.~in an analogous fashion.

\begin{definition} Let $\CK$ be a subset of $\Lambda$.
\begin{itemize}
\item We say that $\CK$ is {\em open}, if for all $\gamma\in \CK$, $\lambda\in \Lambda$ with $\lambda\ge \gamma$ we have $\lambda\in \CK$, i.e. if $\CK=\bigcup_{\gamma\in\CK}\{\ge\gamma\}$.
\item We say that $\CK$ is {\em closed} if $\Lambda\setminus\CK$ is open, i.e. if $\CK=\bigcup_{\gamma\in\CK}\{\le\gamma\}$.
\item We say that $\CK$ is {\em locally closed} if it is the intersection of an open and a closed subset of $\Lambda$.
\end{itemize}
\end{definition}
For a subset $\CK$ of $\Lambda$ the set $\CK^+:=\bigcup_{\gamma\in\CK}\{\ge \gamma\}$ is the smallest open subset containing $\CK$, and $\CK^-:=\bigcup_{\gamma\in\CK}\{\le\gamma\}$ is the smallest closed subset containing $\CK$. $\CK$ is locally closed if $\CK=\CK^-\cap\CK^+$.

For any subset $\CK$ of $\Lambda$ we define
$$
X_\CK=\bigsqcup_{\gamma\in \CK} X_\gamma \subset X
$$
If $\CK$ is open (closed, locally closed), then $X_\CK$
is an open (closed, locally closed, resp.) subvariety in $X$. In
particular, for any $\lambda\in\Lambda$ the subvariety $X_{\le
  \lambda}:=X_{\{\le \lambda\}}$ is closed. For
$\CF\in D_{T,\Lambda}^b(X,k)$ we define $\CF_\CK:=\CF_{X_\CK}$.

\subsection{Equivariant parity sheaves}
For $\lambda\in\Lambda$ we denote by $i_\lambda\colon X_\lambda\to X$ the inclusion. We now give the definition of an equivariant parity sheaf on $X$:

\begin{definition} Let $?$ either denote the symbol $\ast$ or the symbol $!$, and let $\CP\in D_T^b(X,k)$.
\begin{itemize}
\item $\CP$ is {\em $?$-even} (resp. {\em $?$-odd}) if for all $\lambda\in \Lambda$ the sheaf $i^?_\lambda\CP$ is a direct sum of constant sheaves appearing only in even (resp. odd) degrees.
\item $\CP$ is {\em even} (resp. {\em odd}) if it is both $*$-even and $!$-even (both $*$-odd and $!$-odd, resp.).
\item $\CP$ is {\em parity} if it may be written as a sum $\CP = \CP_{0} \oplus \CP_{1}$ with $\CP_{0}$ even and $\CP_{1}$ odd.
\end{itemize}
\end{definition}

Note that, by assumption (S1), for all $\lambda \in \Lambda$, all $T$-equivariant local systems on $X_{\lambda}$ are trivial and we have
\[
H^{\bullet}_T(X_{\lambda},k) = H^{\bullet}_T(pt,k) = S_k.
\]
Hence, we have the following classification of indecomposable parity sheaves (see \cite[Theorem 2.9]{JMW2}):

\begin{theorem} \label{thm-parityunique} Suppose that $k$ is a complete local ring. For all $\lambda \in \Lambda$ there exists, up to isomorphism, at most one
indecomposable parity sheaf $\CP(\lambda)$ extending the equivariant constant sheaf $\ul{k}_{X_{\lambda}}$. 
Moreover, any indecomposable parity sheaf is isomorphic to 
$\CP(\lambda)[i]$ for some $\lambda \in \Lambda$ and some integer $i$.
\end{theorem}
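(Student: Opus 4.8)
My plan is to follow the argument of \cite{JMW2}. Everything rests on one homological input, which I would establish first: writing $\Hom^n(\CE,\CF):=\Hom_{D^b_T(X,k)}(\CE,\CF[n])$, if $\CE$ is $\ast$-even and $\CF$ is $!$-even then $\Hom^n(\CE,\CF)=0$ for every odd $n$, and $\bigoplus_n\Hom^n(\CE,\CF)$ is a finitely generated graded $S_k$-module. Granting this, the $\Hom^0$-spaces among parity sheaves are finite $k$-modules (the degree-$0$ part of a finitely generated graded $S_k$-module is finite over $k$, each $(S_k)_m$ being a finite free $k$-module). Since $k$ is complete local and parity sheaves are closed under direct summands in $D^b_T(X,k)$, they form a Krull--Schmidt category, so an indecomposable parity sheaf $\CP$ has local endomorphism ring $\End^0(\CP)$. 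Inspecting the parity of the underlying complex, $\CP$ is even or odd, and after a possible shift by $[1]$ I may assume it is even.

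The homological lemma I would prove by induction on the number of strata (finite, by (A1)) meeting $\supp\CE\cup\supp\CF$. For a single stratum $X_\lambda$ both sheaves are finite sums of shifts of $\ul{k}_{X_\lambda}$, and by (S1) $X_\lambda\cong\DC^{d_\lambda}$ is $T$-equivariantly contractible (scalar multiplication commutes with any linear action), so $\Hom^n(\ul{k}_{X_\lambda},\ul{k}_{X_\lambda})=H^n_T(X_\lambda,k)=(S_k)_n$, which vanishes in odd degrees and is finite free over $k$ in each degree. For the inductive step I would choose an open union of strata $j\colon U\inj X$ with closed complement $i\colon Z\inj X$ and apply $\Hom(-,\CF)$ to the distinguished triangle $j_!j^\ast\CE\to\CE\to i_\ast i^\ast\CE\stackrel{[1]}{\to}$; the adjunctions for the open immersion $j$ (with $j^\ast=j^!$) and the closed immersion $i$ (with $i_\ast=i_!$) identify the outer terms with $\Hom^\bullet(j^\ast\CE,j^\ast\CF)$ on $U$ and $\Hom^\bullet(i^\ast\CE,i^!\CF)$ on $Z$, to which the inductive hypothesis applies since $j^\ast\CE,i^\ast\CE$ are $\ast$-even and $j^!\CF,i^!\CF$ are $!$-even. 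The long exact sequence then forces the middle term to vanish in odd degrees and to be finitely generated over the Noetherian ring $S_k$. I expect this step --- the interplay of $\ast$-restrictions of $\CE$, $!$-restrictions of $\CF$, and the recollement triangles --- to be the technical heart; everything after it is formal.

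Next I would analyse an indecomposable even $\CP$ with $Y:=\supp\CP$, choosing a stratum $X_\lambda$ open in $Y$ and an open union of strata $U\subset X$ with $U\cap Y=X_\lambda$ and closed complement $Z$. Then $j^\ast\CP$ is supported on the closed subvariety $X_\lambda\subset U$, with $\ast$-restriction $L:=i_\lambda^\ast\CP$ a finite sum of shifts of $\ul{k}_{X_\lambda}$ in even degrees, and $\Hom^\bullet(j^\ast\CP,j^\ast\CP)=\End^\bullet(L)$. Applying $\Hom(-,\CP)$ to $j_!j^\ast\CP\to\CP\to i_\ast i^\ast\CP\stackrel{[1]}{\to}$, the term $\Hom^1(i_\ast i^\ast\CP,\CP)=\Hom^1(i^\ast\CP,i^!\CP)$ vanishes by the lemma, so the restriction $\End^0(\CP)\to\End^0(L)$ is a surjective ring homomorphism. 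As $\End^0(\CP)$ is local, so is $\End^0(L)$; but if $L$ had two summands, $\End^0(L)$ would contain the nontrivial idempotent projecting onto one of them (the off-diagonal Hom-groups being $(S_k)_{2(b-a)}$). Hence $L\cong\ul{k}_{X_\lambda}[m]$ for a single even $m$; running the same argument with $U$ the union of all strata open in $Y$ gives $Y=\ol{X_\lambda}$. Then $\CP[-m]$ is an indecomposable even parity sheaf supported on $\ol{X_\lambda}$ with $i_\lambda^\ast(\CP[-m])\cong\ul{k}_{X_\lambda}$; writing $\CP(\lambda)$ for it, $\CP\cong\CP(\lambda)[m]$, which proves the second assertion.

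For uniqueness I would take indecomposable parity sheaves $\CP,\CP'$ supported on $\ol{X_\lambda}$ with $i_\lambda^\ast\CP\cong\ul{k}_{X_\lambda}\cong i_\lambda^\ast\CP'$ (both even, since an odd sheaf restricts to odd degrees on every stratum). Running the recollement sequence above with $\Hom(-,\CP')$ and with $\Hom(-,\CP)$, and using $\Hom^1(i^\ast\CP,i^!\CP')=\Hom^1(i^\ast\CP',i^!\CP)=0$, shows that the restriction maps $\Hom^0(\CP,\CP')\to k$ and $\Hom^0(\CP',\CP)\to k$ are surjective. I would pick $f\colon\CP\to\CP'$ and $g\colon\CP'\to\CP$ restricting to the identity on $X_\lambda$; then $gf\in\End^0(\CP)$ restricts to the identity on $X_\lambda$, and if $gf$ lay in the maximal ideal of the local ring $\End^0(\CP)$, the automorphism $1-gf$ of $\CP$ would restrict to $0$ on $X_\lambda$, absurd since $i_\lambda^\ast\CP\ne 0$. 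So $gf$, and likewise $fg$, are automorphisms, whence $f$ is an isomorphism and $\CP\cong\CP'$. The only genuinely nontrivial ingredient in all this is the homological lemma; given it and the Krull--Schmidt property, the rest is formal.
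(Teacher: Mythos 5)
The paper does not prove this theorem but cites it from \cite[Theorem 2.9]{JMW2}; your proof correctly reconstructs the argument given there, resting on the same key homological input (parity-vanishing and finite generation of $\Hom^\bullet$ between a $\ast$-even and a $!$-even complex, proved by the same open--closed recollement induction) followed by the standard Krull--Schmidt formalism for a Hom-finite category over a complete local ring.
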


Note that in this paper (in contrast to \cite{JMW2}) we normalise
indecomposable parity sheaves in such a way that the restriction of
$\CP(\lambda)$ to $X_{\lambda}$ is the constant sheaf in degree
$0$. Also, in \cite{JMW2} parity sheaves a considered with respect to
an arbitrary ``pariversity'' $\dagger : \Lambda \to
\mathbb{Z}/2\mathbb{Z}$.  In this paper we only consider parity
sheaves with respect to the constant pariversity, which corresponds to
the above definition.

\begin{proposition} \label{prop:parityselfdual}
Let $\lambda \in \Lambda$ and assume that $\CP(\lambda)$ exists. We have $\DD( \CP(\lambda)) \cong \CP(\lambda)[2d_{\lambda}] $ where $d_{\lambda}$ denotes the complex dimension of $X_{\lambda}$.
\end{proposition}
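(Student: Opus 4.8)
The plan is to deduce the statement from the uniqueness of indecomposable parity sheaves (Theorem~\ref{thm-parityunique}) by checking that the object $\DD(\CP(\lambda))[-2d_\lambda]$ enjoys the three defining properties of $\CP(\lambda)$: it is indecomposable, it is parity, and its restriction to the open stratum $X_\lambda$ is the equivariant constant sheaf concentrated in degree $0$.

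First I would record the formal properties of Verdier duality that are needed. By assumption (S2) the functor $\DD$ preserves $D^b_{T,\Lambda}(X,k)$, and it is a contravariant self-equivalence of this category with $\DD\circ\DD\cong\id$; in particular it sends indecomposable objects to indecomposable objects and preserves supports, so $\DD(\CP(\lambda))$ is indecomposable and supported on $\ol{X_\lambda}$. Next I would use the base change isomorphisms $i_\lambda^{*}\circ\DD\cong\DD\circ i_\lambda^{!}$ and $i_\lambda^{!}\circ\DD\cong\DD\circ i_\lambda^{*}$ for the inclusion $i_\lambda\colon X_\lambda\to X$. Since (S1) gives $X_\lambda\cong\DC^{d_\lambda}$ with a linear $T$-action, the ($T$-equivariant) dualizing complex of $X_\lambda$ is $\ul{k}_{X_\lambda}[2d_\lambda]$ — a smooth complex variety is canonically oriented, so this holds over an arbitrary ring $k$ — whence $\DD_{X_\lambda}(\ul{k}_{X_\lambda}[2n])\cong\ul{k}_{X_\lambda}[2d_\lambda-2n]$ for all $n$. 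Combining this with the base change isomorphisms, one sees that if $\CP$ is $*$-even (resp. $!$-even, $*$-odd, $!$-odd) then $\DD(\CP)$ is $!$-even (resp. $*$-even, $!$-odd, $*$-odd); hence $\DD$ takes even sheaves to even sheaves and odd sheaves to odd sheaves, and in particular $\DD(\CP(\lambda))$ is again parity.

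It remains to identify the restriction of $\DD(\CP(\lambda))$ to the open stratum $X_\lambda$. Because $X_\lambda$ is open in $\ol{X_\lambda}=\supp\CP(\lambda)$, we have $i_\lambda^{!}\CP(\lambda)\cong i_\lambda^{*}\CP(\lambda)$, which by our normalisation is $\ul{k}_{X_\lambda}$ in degree $0$. Applying $i_\lambda^{*}\circ\DD\cong\DD\circ i_\lambda^{!}$ together with the computation of $\DD_{X_\lambda}$ above yields $i_\lambda^{*}\DD(\CP(\lambda))\cong\ul{k}_{X_\lambda}[2d_\lambda]$, so $i_\lambda^{*}\bigl(\DD(\CP(\lambda))[-2d_\lambda]\bigr)\cong\ul{k}_{X_\lambda}$ in degree $0$. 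Thus $\DD(\CP(\lambda))[-2d_\lambda]$ is an indecomposable parity sheaf supported on $\ol{X_\lambda}$ extending the equivariant constant sheaf on $X_\lambda$; by the uniqueness part of Theorem~\ref{thm-parityunique} it is isomorphic to $\CP(\lambda)$, which is exactly the asserted isomorphism $\DD(\CP(\lambda))\cong\CP(\lambda)[2d_\lambda]$.

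The only genuinely non-formal point — and the one I would write out carefully — is the identification of the equivariant dualizing complex of $X_\lambda\cong\DC^{d_\lambda}$ with $\ul{k}_{X_\lambda}[2d_\lambda]$ over an arbitrary ring $k$, and the compatibility of $\DD$ with $i_\lambda^{*}$ and $i_\lambda^{!}$ in the equivariant derived category; everything else is bookkeeping with the definitions of Section~\ref{sec-eps}. I would also note in passing that $k$ is complete local throughout this section, so that Theorem~\ref{thm-parityunique} is indeed available.
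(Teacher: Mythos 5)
Your proof is correct and takes essentially the same route as the paper: the paper's one-line argument cites Theorem~\ref{thm-parityunique}, the fact that $\DD$ preserves parity, and the isomorphism $\DD\,\ul{k}_{X_\lambda}\cong\ul{k}_{X_\lambda}[2d_\lambda]$ — exactly the three ingredients you verify and combine. You have simply written out the bookkeeping (the $*/!$ swap under duality, the orientability of $X_\lambda\cong\DC^{d_\lambda}$, and the identification of $i_\lambda^*\DD(\CP(\lambda))$) that the paper leaves implicit.
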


\begin{proof} This is a simple consequence of the above theorem, together with the fact that $\DD$ preserves parity and the fact that $\DD \ul{k}_{X_{\lambda}} \cong \ul{k}_{X_{\lambda}}[2d_{\lambda}]$.
\end{proof}

\subsection{Short exact sequences involving parity sheaves}

Let $\CQ$ be a parity sheaf on $X$ and let $\CJ\subset\Lambda$ be an open subset with closed complement $\CI=\Lambda\setminus \CJ$. Denote by $j\colon X_\CJ\to X$ and $i\colon X_\CI\to X$ the corresponding inclusions. Consider the distinguished triangle
\begin{equation} \label{eq:!-exact}
i_!i^!\CQ\to \CQ\to j_\ast j^\ast\CQ\stackrel{[1]}\to.
\end{equation}
\begin{lemma}\label{lemma-ses}
\begin{enumerate}
\item The above triangle gives rise to a short exact sequence
$$
0\to\THypgr(i^!\CQ)\to\THypgr(\CQ)\to\THypgr(\CQ_{\CJ})\to 0.
$$
\item Let $\CP$ be another parity sheaf on $X$. Then the above triangle gives rise to a short exact sequence
$$
0\to \Hom^\bullet(i^\ast\CP,i^!\CQ)\to\Hom^\bullet(\CP, \CQ)\to\Hom^\bullet(\CP_{\CJ},
\CQ_{\CJ})\to 0.
$$
\end{enumerate}
\end{lemma}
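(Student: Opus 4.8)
The plan is to deduce both short exact sequences from the distinguished triangle \eqref{eq:!-exact} by showing that the long exact sequences obtained after applying $\THypgr(-)$ (respectively $\Hom^\bullet(\CP,-)$) degenerate. In both cases the key structural input is the parity property: the equivariant cohomology (or the graded $\Hom$-space) of a parity complex is concentrated in degrees of a single parity. More precisely, since $\CQ$ is parity, for each $\lambda$ the sheaf $i_\lambda^\ast\CQ$ and $i_\lambda^!\CQ$ are direct sums of (shifted) constant sheaves concentrated in degrees of fixed parity, and $H_T^\bullet(X_\lambda,k)=S_k$ is concentrated in even degrees by assumption (S1). Filtering $X_\CJ$ by strata and using the triangles $i_{\le\mu,!}i_{\le\mu}^!\to i_{\le\mu,\ast}i_{\le\mu}^\ast\to\dots$ (i.e. an induction on the number of strata in $\CJ$, as in the proof of the parity formalism of \cite{JMW2}), one shows that $\THypgr(\CQ_\CJ)$ is a free $S_k$-module concentrated in degrees of the parity of $\CQ$'s even part on even strata, and likewise $\THypgr(i^!\CQ)=\THypgr(i_\CI^!\CQ)$ is free and concentrated in the same parity. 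Since the connecting map $\THypgr(\CQ_\CJ)\to\THypgr(i^!\CQ)[1]$ in the long exact sequence shifts degree by one, it lands between modules of opposite parity, hence is zero; this is exactly what splits the long exact sequence into the short exact sequence of (1).

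For part (2) I would run the same argument with $\Hom^\bullet(\CP,-)$ in place of $\THypgr(-)$. Applying $\Hom^\bullet(\CP,-)$ to \eqref{eq:!-exact} and using the adjunctions $\Hom^\bullet(\CP,i_!i^!\CQ)=\Hom^\bullet(i^\ast\CP,i^!\CQ)$ and $\Hom^\bullet(\CP,j_\ast j^\ast\CQ)=\Hom^\bullet(j^\ast\CP,j^\ast\CQ)=\Hom^\bullet(\CP_\CJ,\CQ_\CJ)$, we obtain a long exact sequence
$$
\dots\to\Hom^\bullet(i^\ast\CP,i^!\CQ)\to\Hom^\bullet(\CP,\CQ)\to\Hom^\bullet(\CP_\CJ,\CQ_\CJ)\stackrel{[1]}\to\dots.
$$
The parity vanishing of $\Hom$-spaces between parity complexes — which follows by the standard spectral sequence / stratification argument from the fact that $\Hom^\bullet$ between constant sheaves on a stratum is $H_T^\bullet(X_\lambda,k)=S_k$, concentrated in even degrees, combined with the hypothesis that $\CP$ is $\ast$-parity and $\CQ$ is $!$-parity of the same overall parity — shows that all three terms are concentrated in degrees of a single, common parity. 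Hence the connecting homomorphism, which shifts degree by $1$, vanishes, and the long exact sequence breaks up into the asserted short exact sequences.

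In both parts the crucial and slightly delicate point is the \emph{freeness/parity-concentration} statement for $\THypgr(\CQ_\CJ)$ and $\THypgr(i^!\CQ)$ (resp. for the three $\Hom$-spaces): one needs to know that parity of stalks/costalks is inherited by equivariant (hyper)cohomology of the restriction to an arbitrary open or closed union of strata, not just to a single stratum. This is proved by induction on $|\CJ|$ (or $|\CI|$): for a single stratum it is immediate from (S1), and in the inductive step one chooses a stratum $X_\mu$ open (resp. closed) in $X_\CJ$, uses the triangle relating restriction to $X_\mu$ and to $X_{\CJ\setminus\{\mu\}}$, and observes that again the relevant connecting map runs between $S_k$-free modules of opposite parity, hence vanishes — giving both freeness and the parity concentration simultaneously. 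I expect this inductive parity bookkeeping to be the only real work; once it is in place, the two short exact sequences fall out immediately from the degeneration of the long exact sequences.

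Having established the degeneration, one should also remark (as is implicit) that the resulting short sequences are sequences of $S_k$-modules (for (1)) and of $\Hom^\bullet(\CP,\CP)$-$\Hom^\bullet(\CQ,\CQ)$-bimodules or at least of graded $S_k$-modules (for (2)), since all maps in \eqref{eq:!-exact} are $T$-equivariant and the adjunction isomorphisms are $S_k$-linear; this is automatic and requires no extra argument.
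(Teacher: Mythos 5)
Your proposal is correct and follows essentially the same strategy as the paper: deduce the degeneration of the long exact sequence from parity vanishing of $\Hom$-spaces between $\ast$-even and $!$-even complexes. The paper is more economical — it simply cites \cite[Corollary 2.8]{JMW2} for the statement that $\Hom(\CP,\CQ'[n])=0$ for odd $n$ when $\CP$ is $\ast$-even and $\CQ'$ is $!$-even, proves (2) first, and obtains (1) as the special case $\CP=\ul{k}_X$; you instead re-derive the parity vanishing by induction on strata and handle (1) and (2) separately, but the underlying mechanism (the connecting homomorphism is forced to vanish because it shifts parity) is identical.
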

\begin{proof} We may assume without loss of generality that $\CQ$ is
  even. Then the distinguished triangle in \eqref{eq:!-exact} is a
  distinguished triangle of $!$-even sheaves. If $\CP$ (resp. $\CQ'$)
  is $*$-even (resp. $!$-even) then an induction (see \cite[Corollary
  2.8]{JMW2}) shows that $\Hom(\CP, \CQ'[n]) = 0$ for odd $n$. Then (2) follows and part (1)  is the case $\CP = \ul{k}_X$.
\end{proof}

\subsection{Further properties of equivariant parity sheaves}

The following properties of the equivariant cohomology of parity sheaves will be useful when we come to relate parity sheaves and Braden-MacPherson sheaves in the next section.

\begin{proposition}\label{prop-propofparity} Suppose that $\CP$ is an equivariant parity sheaf on $X$. Then the following holds:
\begin{enumerate}
\item For any open subset $\CJ$ of $\Lambda$ the equivariant cohomology $\THypgr(\CP_\CJ)$ is a free $S_k$-module.
\item For any open subset $\CJ$ of $\Lambda$ the restriction homomorphism $\THypgr(\CP)\to \THypgr(\CP_\CJ)$ is surjective.
\item Assume that (A4b) holds and suppose that $E\subset X_\lambda$ is a one-dimensional $T$-orbit. Then the restriction map
$$
\rho_{\lambda,E}\colon \THypgr(\CP_{x_\lambda})\to \THypgr(\CP_{E})
$$
is surjective with kernel $\alpha_E \THypgr(\CP_{x_\lambda})$.
\end{enumerate}
\end{proposition}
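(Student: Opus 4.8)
The plan is to establish the three claims by a uniform induction on the size of the open set $\CJ$ (for (1) and (2)) and then to deduce (3) by localising to a single one-dimensional orbit. For part (1), I would argue by induction on $|\CJ|$. The base case $\CJ=\emptyset$ is trivial. For the inductive step, choose $\lambda$ minimal in $\CJ$, so that $\CJ'=\CJ\setminus\{\lambda\}$ is again open and $X_\lambda$ is closed in $X_\CJ$. Since $\CP$ is parity we may assume $\CP$ is even. The distinguished triangle $i_!i^!\CP_\CJ\to\CP_\CJ\to j_\ast j^\ast\CP_\CJ\stackrel{[1]}\to$ associated to $X_\lambda\inj X_\CJ\hookleftarrow X_{\CJ'}$ gives, by Lemma \ref{lemma-ses}(1), a short exact sequence
$$
0\to\THypgr(i^!\CP_\CJ)\to\THypgr(\CP_\CJ)\to\THypgr(\CP_{\CJ'})\to 0.
$$
By induction $\THypgr(\CP_{\CJ'})$ is free. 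For the left term, $i^!\CP_\CJ$ is a $!$-even sheaf on $X_\lambda\cong\DC^{d_\lambda}$ by (S1), so it is a direct sum of shifted constant sheaves in even degrees, whence $\THypgr(i^!\CP_\CJ)$ is a direct sum of shifts of $H^\bullet_T(X_\lambda,k)=S_k$ and in particular free. A short exact sequence of graded $S_k$-modules with free right-hand term and free left-hand term has free middle term (the sequence splits), so $\THypgr(\CP_\CJ)$ is free. This simultaneously proves (2): the surjectivity of $\THypgr(\CP)\to\THypgr(\CP_\CJ)$ follows from the same short exact sequence (taking $\CJ=\Lambda$, $\CJ'=\Lambda\setminus\{\lambda\}$ for various $\lambda$) by an induction — more precisely, by factoring the restriction through a chain of opens $\Lambda=\CJ_0\supset\CJ_1\supset\dots$ and using that each successive restriction is surjective by the short exact sequence above, then composing.

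For part (3), fix a one-dimensional orbit $E\subset X_\lambda$. The key geometric input is that $\ol E$ is smooth (assumption (A1)), hence $T$-equivariantly isomorphic to $\DP^1$ or $\DC^1$; since $E\subset X_\lambda$ is in a single stratum and $X_\lambda\cong\DC^{d_\lambda}$, the orbit closure $E\cup\{x_\lambda\}$ inside $X_\lambda$ is $T$-equivariantly $\DC_{\alpha_E}$. By the attractive Proposition \ref{prop:attr} (applied to the connected affine $T$-variety $\DC_{\alpha_E}$ with attractive fixed point $x_\lambda$), we have $\THypgr(\CP_{E\cup\{x_\lambda\}})\stackrel{\sim}\to\THypgr(\CP_{x_\lambda})$. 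Because $\CP$ is parity and $E\cup\{x_\lambda\}$ lies in a single stratum, $\CP$ restricted there is a direct sum of (shifted) constant sheaves concentrated in even (or in odd) degrees, so it suffices to treat $\CP_{E\cup\{x_\lambda\}}=\ul k$. Thus I am reduced to computing the restriction map $S_k=H^\bullet_T(\DC_{\alpha_E},k)\to H^\bullet_T(\DC^\times_{\alpha_E},k)$. Here $\DC^\times_{\alpha_E}\cong T/\ker\alpha_E$, and a standard computation (cf.~\cite[Section 1.9]{Jan}) identifies $H^\bullet_T(\DC^\times_{\alpha_E},k)$ with $H^\bullet_{\ker\alpha_E}(pt,k)$, which by (A4b) — ensuring no torsion is introduced when passing to the subtorus $\ker\alpha_E$ — is $S_k/\alpha_E S_k$, and the restriction map is exactly the quotient $S_k\twoheadrightarrow S_k/\alpha_E S_k$. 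This gives surjectivity with kernel $\alpha_E S_k=\alpha_E\THypgr(\CP_{x_\lambda})$, as claimed.

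The step I expect to be the main obstacle is the precise identification in part (3) of $\THypgr(\CP_E)$ with $S_k/\alpha_E S_k$ and of the restriction map with the quotient map, with integral coefficients. Over a field of characteristic $0$ this is routine, but with coefficients in a complete local PID one must check that passing from $H^\bullet_T(pt,k)=S_k$ to $H^\bullet_{\ker\alpha_E}(pt,k)$ does not introduce extra kernel or cokernel; this is precisely what assumption (A4b) is designed to guarantee, since the obstruction is controlled by the divisibility of $\alpha_E$ in the character lattice. One must also be careful that $\CP_E$ is pulled back from $E$ (it is a $T$-equivariant local system, hence trivial by (S1) since $E\subset X_\lambda$), so that $\THypgr(\CP_E)=H^\bullet_T(E,k)$ rather than something larger. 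Everything else — the homological-algebra splitting arguments in (1) and (2), and the reduction via Proposition \ref{prop:attr} in (3) — is formal once Lemma \ref{lemma-ses} and the attractive proposition are in hand.
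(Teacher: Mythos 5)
Your argument matches the paper's proof: part (1) is the same induction via Lemma \ref{lemma-ses}(1) and freeness of $\THypgr(i^!\CP)$ for a minimal stratum, part (3) is the same reduction to a constant sheaf on $E\cup\{x_\lambda\}\cong\DC_{\alpha_E}$ followed by the rank-one computation $S_k\to S_k/\alpha_E S_k$ with (A4b) controlling divisibility. For (2) the paper simply observes it is Lemma \ref{lemma-ses}(1) applied once with $\CI=\Lambda\setminus\CJ$; your chain-of-opens factorisation is correct but an unnecessary detour, since that lemma already holds for arbitrary open $\CJ$.
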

\begin{proof} 
Note that (2) has already been shown in the previous lemma. For (1), first note that  if we choose an open subset $\CJ \subset \Lambda$ then $\CP_{\CJ}$ is a parity sheaf on $X_{\CJ}$. Hence it is enough to show that $\THypgr(\CP)$ is a free $S_k$-module. Choose $x \in \Lambda$ minimal, let $\CI=\{x\}$ and $\CJ=\Lambda\setminus\{x\}$. We have an exact sequence
\[
0\to\THypgr(i^!\CP)\to\THypgr(\CP)\to\THypgr(\CP_{\CJ})\to 0.
\]
As $\CP$ is a parity sheaf, $i^! \CP$ is a direct sum of constant sheaves and so $\THypgr(i^!\CP)$ is a free $S_k$-module. Using induction we can assume that  $\THypgr(\CP_{\CJ})$ is a free $S_k$-module. Hence $\THypgr(\CP)$ is free.

Let us prove (3). Since $E\cup\{x_\lambda\}$ is contained in $X_\lambda$, the restriction of $\CP$ to $E\cup\{x_\lambda\}$ is isomorphic to a sum of shifted constant sheaves. Hence it is enough to show that if $T$ acts on $\DC$ via the character $\alpha\ne 0$ such that $n$ is invertible in $k$ if $\alpha$ is divisible by $n$ in $X^\ast(T)$, then the map
 $$
\rho_{0,\DC^\times}\colon H_T^\bullet(\{0\},k)\to H_T^\bullet(\DC^\times,k)
$$
identifies with the canonical quotient map $S_k\to S_k/\alpha S_k$.
 With characteristic $0$ coefficients this is proved in
\cite[Section 1.10]{JanNotes}. The divisibility assumption guarantees
that the argument given there also works with coefficients in $k$.
\end{proof}

%



\subsection{Obtaining parity sheaves via resolutions} Up until now we have only discussed various properties of parity sheaves, without discussing their existence. We now show that the existence of certain proper morphisms to the varieties $\overline{X_{\lambda}}$ guarantees the existence of parity sheaves.

Assume that, for all $\lambda \in \Lambda$, there exists a $T$-variety $\widetilde{X_{\lambda}}$ and a proper surjective morphism
\[ \pi_{\lambda} : \widetilde{X_{\lambda}} \to \overline{X_{\lambda}} \]
such that:
\begin{itemize}
\item[(R1)] each $\widetilde{X_{\lambda}}$ is smooth and admits a
  $T$-equivariant closed
  embedding $\widetilde{X_{\lambda}} \hookrightarrow \mathbb{P}(V)$ for some
  $T$-module $V$,
\item[(R2)] the fixed point set ${\widetilde{X_{\lambda}}}^T$ is finite,
\item[(R3)] $\pi_{\lambda*} \underline{k}_{\widetilde{X_{\lambda}}}$ is constructible with respect to the stratification $\Lambda$ (that is, $\pi_{\lambda*} \underline{k}_{\widetilde{X_{\lambda}}} \in D^b_{T, \Lambda}(X, k)$).
\end{itemize}
Note that we do not assume that the morphisms $\pi_{\lambda}$ are birational.

\begin{theorem} \label{thm:parityexists} Assume that $k$ is a complete local principal ideal domain. With the above assumptions we have:
\begin{enumerate}
\item For all $\lambda \in \Lambda$ there exists an indecomposable parity sheaf $\CP(\lambda) \in D^b_T(X,k)$ with support equal to $\overline{X_{\lambda}}$ and such that $\CP(\lambda)_{X_{\lambda}} \cong \ul{k}_{X_{\lambda}}$.
\item For all $\mu \le \lambda$ the restriction homomorphism
\[
\THypgr( \CP(\lambda)) \to \THypgr( \CP(\lambda)_{x_{\mu}}) 
\]
is surjective.
\item The cohomology $\THypgr( \CP(\lambda) )$ is self-dual of degree
  $2 \dim X_{\lambda}$. That is,
\[
\Hom_{S_k}^\bullet (\THypgr( \CP(\lambda)), S_k) \cong \THypgr(
\CP(\lambda))[2\dim X_{\lambda}].
\]
\end{enumerate}
\end{theorem}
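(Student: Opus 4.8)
The plan is to prove the three assertions in sequence, using the resolutions $\pi_\lambda$ to build the $\CP(\lambda)$ as summands of pushforwards, and then bootstrapping freeness, surjectivity and self-duality from the structure of parity sheaves already established.

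\textbf{Step 1: Existence.} First I would form $\CE_\lambda := \pi_{\lambda*}\ul{k}_{\widetilde{X_\lambda}}$. Since $\pi_\lambda$ is proper, $\pi_{\lambda*} = \pi_{\lambda!}$; since $\widetilde{X_\lambda}$ is smooth and projective over $k$, the decomposition theorem is not available in positive characteristic, but what we \emph{do} have is that $\pi_{\lambda*}\ul{k}$ is both $*$-even and $!$-even. Indeed, $*$-evenness follows because for each stratum $X_\mu$ the fibre of $\pi_\lambda$ over the attractive fixed point $x_\mu$ contracts (using an attractive one-parameter subgroup as in Proposition~\ref{prop:attr}) the whole stratum-preimage, so the stalk $i_\mu^*\CE_\lambda$ is the cohomology of a fibre which (by (R1), (R2) and the standard argument that a smooth projective variety with finitely many fixed points and a contracting $\DC^\times$-action has cohomology free and concentrated in even degrees — a Bia{\l}ynicki-Birula / Frobenius-splitting-free argument valid over any PID) is free and even. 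The $!$-evenness follows by applying Verdier duality, using (S2) and $\DD\ul{k}_{\widetilde{X_\lambda}} \cong \ul{k}_{\widetilde{X_\lambda}}[2\dim]$ together with properness. Hence $\CE_\lambda$ is an even parity sheaf. Since $k$ is complete local, the endomorphism ring of $\CE_\lambda$ is a (finite) $k$-algebra, so Fitting's lemma / the Krull--Schmidt property applies and $\CE_\lambda$ decomposes into indecomposable parity summands. Because $\pi_\lambda$ is surjective with $\overline{X_\lambda}$ irreducible and the generic fibre over $X_\lambda$ is finite, the summand supported on all of $\overline{X_\lambda}$ and restricting to a nonzero constant sheaf on $X_\lambda$ picks out — after normalising the shift — the desired $\CP(\lambda)$, whose uniqueness is Theorem~\ref{thm-parityunique}. (If the generic fibre has more than one point one argues on the rank over the open stratum, or composes with a further normalisation; this is the only place the non-birational case needs a remark.)

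\textbf{Step 2: Surjectivity of restriction to fixed points.} For $\mu \le \lambda$, I would factor the restriction $\THypgr(\CP(\lambda)) \to \THypgr(\CP(\lambda)_{x_\mu})$ through $\THypgr(\CP(\lambda)_{\CJ})$ where $\CJ = \{\ge \mu\}$ is open; by Proposition~\ref{prop-propofparity}(2) the first map is surjective, and $x_\mu$ is the attractive fixed point of the minimal (open) stratum $X_\mu$ inside $X_\CJ$, so by Proposition~\ref{prop:attr} the restriction $\THypgr(\CP(\lambda)_{\CJ}) \to \THypgr(\CP(\lambda)_{x_\mu})$ is (an isomorphism onto the stalk, hence in particular) surjective after another application of Lemma~\ref{lemma-ses}(1) peeling off $x_\mu$. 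Composing gives the claim.

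\textbf{Step 3: Self-duality of the cohomology.} Here I would combine Proposition~\ref{prop:parityselfdual}, which gives $\DD\CP(\lambda) \cong \CP(\lambda)[2d_\lambda]$, with the fact that for the map $\pi\colon X\to \pt$ one has $\THypgr(\DD\CF) = \GHyp^\bullet(\pi_*\DD\CF)$ and, $\THypgr(\CP(\lambda))$ being free over $S_k$ by (1), equivariant Poincaré--Verdier duality identifies $\THypgr(\DD\CF)$ with the graded $S_k$-dual $\Hom^\bullet_{S_k}(\THypgr(\CF), S_k)$ up to the relevant shift coming from $\DD$ on $BT$; since $X$ here is $\overline{X_\lambda}$ (or one restricts to it) the ambient dimension shift is precisely $2\dim X_\lambda = 2 d_\lambda$, and the statement follows. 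The one subtlety is making the duality pairing $\THypgr(\CF)\otimes_{S_k}\THypgr(\DD\CF) \to S_k$ perfect over $S_k$ rather than just over its fraction field — this is exactly where freeness (1) is used, and one checks perfectness on each fixed point via the localisation Theorem~\ref{theorem-loc1}, where it reduces to the evident self-duality of the (free, even) stalk cohomologies.

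\textbf{Main obstacle.} The crux is Step~1, specifically establishing that $\pi_{\lambda*}\ul{k}$ is $*$-even with coefficients in an arbitrary complete local PID: over $\DC$ this is the decomposition theorem, which is unavailable, so one must instead argue directly that each (equivariant) fibre of $\pi_\lambda$ has free cohomology concentrated in even degrees. This is where hypotheses (R1)–(R3) and the contracting-$\DC^\times$ structure are essential, and it is the step that genuinely uses the ``parity'' philosophy rather than classical Hodge theory; the rest is formal manipulation of the results already proved in the excerpt.
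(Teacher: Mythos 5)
Your Steps~1 and~3 are essentially sound. In Step~1 you run the evenness argument on the $*$-side (computing $i_{x_\mu}^*\pi_{\lambda*}\ul{k}$ via proper base change as $H_T^\bullet(F,k)$ with $F=\pi_\lambda^{-1}(x_\mu)$, showing this is free and even from the Bia{\l}ynicki-Birula filtration, then deducing $!$-evenness by duality), whereas the paper argues on the $!$-side (showing $\THypgr(\omega_F)$ is free and even and deducing $*$-evenness by duality). These are mirror images and both are valid, given $F$ is compact so $H^\bullet_T(F)\cong H^\bullet_{T,c}(F)$ and the long exact sequence for the closed BB filtration applies. Step~3 matches the paper's argument: Proposition~\ref{prop:parityselfdual}, freeness, and properness of $\overline{X_\lambda}$ give equivariant Poincar\'e--Verdier duality; your extra appeal to localisation to check perfectness is superfluous but harmless.

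Step~2, however, has a genuine gap. You attempt to make the surjectivity of
\[
\THypgr(\CP(\lambda)_{\CJ}) \longrightarrow \THypgr(\CP(\lambda)_{x_\mu}),\qquad \CJ=\{\ge\mu\},
\]
a formal consequence of Lemma~\ref{lemma-ses}(1) and the attractive Proposition~\ref{prop:attr}. But $X_\mu$ is the \emph{closed} (minimal) stratum of $X_\CJ$, not an open one: Lemma~\ref{lemma-ses}(1) gives surjectivity of restriction to the open complement $\CJ\setminus\{\mu\}$, not to $x_\mu$. And Proposition~\ref{prop:attr} gives an isomorphism $\THypgr(\CP_U)\cong\THypgr(\CP_{x_\mu})$ only for a $T$-stable affine connected neighbourhood $U$ of $x_\mu$; such $U$ is in general not a union of strata, so the surjectivity of $\THypgr(\CP_\CJ)\to\THypgr(\CP_U)$ does not follow from the short exact sequence of Lemma~\ref{lemma-ses}. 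In fact the surjectivity in part (2) is \emph{not} a formal property of parity sheaves; it genuinely depends on the resolutions (R1)--(R3). The paper proves it by showing that any $\CP(\lambda)$ is a summand of $\pi_{\lambda*}\ul{k}_{\widetilde{X_\lambda}}[i]$ and that for the latter the restriction to $x_\mu$ is dual (up to shift) to the map $\THypgr(\omega_F)\to\THypgr(\omega_{\widetilde{X_\lambda}})$, which is a split injection because the BB filtration on $F$ induced by the $\mathbb{P}(V_{\le i})$ filtration is compatible with that on $\widetilde{X_\lambda}$ (Proposition~\ref{prop:BB}(2) and Proposition~\ref{prop:BBres}(2)). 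You would need to reintroduce this geometric input to close the gap.
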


Before proving the theorem we state and prove two propositions. For this we need some more notation. Given a $T$-variety $Z$ we write $\omega_Z$ for the $T$-equivariant dualising sheaf in $D^b_T(Z,k)$. Note that, up to reindexing, $\THypgr(\omega_{Z})$ is the $T$-equivariant Borel-Moore homology of $Z$.

Let us fix $\mu \le \lambda$ and set $F := \pi_\lambda^{-1}(x_{\mu})$. We have:

\begin{proposition} \label{prop:BB}
\begin{enumerate}
\item $\THypgr(\omega_F)$ and $\THypgr(\omega_{\widetilde{X_{\lambda}}})$
are free $S_k$-modules concentrated in even degrees.
\item The canonical map $\THypgr(\omega_F) \to \THypgr(\omega_{\widetilde{X_{\lambda}}})$ is a split injection of $S_k$-modules.
\end{enumerate}
\end{proposition}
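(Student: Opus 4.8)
The plan is to prove both parts simultaneously using the Bialynicki-Birula decomposition of $\widetilde{X_\lambda}$ and $F$. First I would observe that since $\widetilde{X_\lambda}$ is smooth, projective (by (R1)) with finite fixed-point set (by (R2)), a generic one-parameter subgroup $\gamma\colon \DC^\times\to T$ yields a Bialynicki-Birula decomposition $\widetilde{X_\lambda}=\bigsqcup_i C_i$ into affine cells, each $C_i$ being the attracting set of a fixed point and $T$-equivariantly isomorphic to affine space with a linear $T$-action. Intersecting with the $T$-stable (in fact $T$-fixed, since $x_\mu$ is a point) subvariety $F=\pi_\lambda^{-1}(x_\mu)$ gives a decomposition of $F$ into affine cells as well, which are precisely those $C_i$ whose fixed point lies in $F$ (because $\pi_\lambda$ is $T$-equivariant and proper, so contracts each cell into the fibre over the contraction of its center). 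This realizes $F$ as a $T$-stable union of cells of $\widetilde{X_\lambda}$.

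The second step is to compute equivariant Borel--Moore homology via these cell decompositions. Ordering the cells so that unions of an initial segment are closed, one gets a filtration of $\widetilde{X_\lambda}$ (resp.\ $F$) by closed $T$-stable subvarieties with successive quotients the affine cells. The long exact sequences in equivariant Borel--Moore homology attached to these filtrations, together with the fact that $\THypgr(\omega_{C_i})=\THypgr(\omega_{\DC^{d_i}})$ is a shifted copy of $S_k$ concentrated in a single (even, after the dualising-sheaf reindexing) degree, force all boundary maps to vanish by a parity/degree argument. Hence $\THypgr(\omega_{\widetilde{X_\lambda}})$ and $\THypgr(\omega_F)$ are free $S_k$-modules with bases indexed by the cells (of $\widetilde{X_\lambda}$, resp.\ those contained in $F$), all in even degrees; this gives (1).

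For (2), note that the cells contained in $F$ form an \emph{initial segment} of a suitable ordering (since $F$ is closed and $T$-stable, we may refine the ordering so that the cells in $F$ come first), so $F\hookrightarrow\widetilde{X_\lambda}$ is a closed inclusion compatible with the filtrations. The functoriality of the Borel--Moore homology long exact sequences then shows that the pushforward $\THypgr(\omega_F)\to\THypgr(\omega_{\widetilde{X_\lambda}})$ sends the basis indexed by cells of $F$ to (part of) the basis of $\THypgr(\omega_{\widetilde{X_\lambda}})$; in particular it is a split injection, the splitting being projection onto the corresponding basis elements. Equivalently, one can phrase this via the localisation theorem: by part (1) both cohomologies are free, so Theorem \ref{theorem-loc1} embeds them into the fixed-point cohomology $\bigoplus_{x\in{\widetilde{X_\lambda}}^T}S_k[\text{shift}]$, and the pushforward $\THypgr(\omega_F)\to\THypgr(\omega_{\widetilde{X_\lambda}})$ becomes the obvious inclusion of the summands indexed by $F^T$, which is manifestly split.

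The main obstacle I anticipate is the bookkeeping needed to make the cellular filtration argument fully rigorous with coefficients in a general complete local PID $k$: one must check that the cells really are $T$-equivariantly affine spaces with linear action (so that their equivariant Borel--Moore homology is a single shifted free rank-one $S_k$-module), that the relevant long exact sequences exist in the equivariant setting over $k$, and that the degree-parity argument killing the differentials goes through integrally (not just rationally). Using the dualising sheaf rather than the constant sheaf means the shifts involve complex dimensions of cells, all even, so the parity argument is robust; the existence of the equivariant Borel--Moore long exact sequences follows from the functor formalism recalled in Section 2. One should also be slightly careful that $\widetilde{X_\lambda}$ need not be connected, but the argument is insensitive to this.
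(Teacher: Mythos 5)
Your overall strategy --- a Bialynicki--Birula decomposition of $\widetilde{X_\lambda}$ together with a filtration argument in equivariant Borel--Moore homology --- is the paper's, but there is a genuine gap at the very first step, where you assert that a \emph{generic} one-parameter subgroup $\gamma\colon\DC^\times\to T$ produces a BB decomposition whose cells with centre in $F=\pi_\lambda^{-1}(x_\mu)$ union to $F$, justified by the parenthetical ``because $\pi_\lambda$ is $T$-equivariant and proper, so contracts each cell into the fibre over the contraction of its center.'' This is not true for a generic $\gamma$. Take $\widetilde{X_\lambda}=\DP^1\times\DP^1$ with $\pi_\lambda$ the first projection onto $\overline{X_\lambda}=\DP^1$, $x_\mu=[1:0]$, and $\gamma(z)=(z^{-1},z)$ (which resolves all fixed points, so is as ``generic'' as any other choice). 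Then the minus cell of the fixed point $([1:0],[0:1])\in F^T$ is the open dense set $(\DP^1\setminus\{[0:1]\})\times(\DP^1\setminus\{[1:0]\})$, whose image under $\pi_\lambda$ is $\DP^1\setminus\{[0:1]\}$ rather than $\{x_\mu\}$; and the union of the cells centred in $F^T$ is $(\DP^1\setminus\{[0:1]\})\times\DP^1\ne F$. So the decomposition of $F$ into cells that you rely on fails, and the rest of the argument has nothing to stand on.

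The missing ingredient is the attractiveness of $x_\mu$ (assumption (A2)), which the paper exploits to make a \emph{specific}, not generic, choice of $\gamma$: one picks $\gamma$ in the open cone of cocharacters contracting a $T$-stable affine neighbourhood of $x_\mu$ in $X$ onto $x_\mu$ as $z\to 0$, while also ensuring $\widetilde{X_\lambda}^{\DC^\times}=\widetilde{X_\lambda}^T$. With this choice and the \emph{minus} BB decomposition (flow as $z\to\infty$): if $y\in Y_x^-$ for some $x\in F^T$, then $\pi_\lambda(y)$ flows to $x_\mu$ as $z\to\infty$; once it enters the contracting affine neighbourhood, the existence of both the $z\to 0$ and the $z\to\infty$ limits inside an affine $T$-variety forces $\pi_\lambda(y)$ to be a $\gamma$-fixed point, hence equal to $x_\mu$. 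That is exactly why $F=\bigsqcup_{x\in F^T}Y_x^-$, and it is the step your proposal skips. A second, more minor gap: you assert without argument that the cells can be totally ordered with closed initial segments; the paper establishes this filtrability from the $T$-equivariant closed embedding $\widetilde{X_\lambda}\hookrightarrow\DP(V)$ guaranteed by (R1), by decomposing $V$ into $\gamma$-weight spaces and intersecting with the resulting filtration of $\DP(V)$, which simultaneously filters $F$ since $F$ is closed. Once the correct $\gamma$ and the $\DP(V)$-filtration are in place, your parity/freeness argument and the identification of the split injection via fundamental classes of cell closures go through as in the paper.
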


\begin{proof}
As $x_{\mu}$ is attractive, there exists a one parameter subgroup $\gamma\colon  \DC^\times \to T$ which contracts an open neighbourhood of $x_{\mu}$ in $X$ onto $x_{\mu}$ as $z \in \DC^\times$ goes to $0$. Moreover, we can choose $\gamma$ such that $\widetilde{X_{\lambda}}^{\DC^\times} = \widetilde{X_{\lambda}}^{T}$.

Now consider the Bialynicki-Birula's minus decomposition of $\widetilde{X_{\lambda}}^{T}$ with respect to $\gamma$. That is, for each $x \in \widetilde{X_{\lambda}}^T$ set
\[
Y_x^- := \{ y \in \widetilde{X_{\lambda}} \; | \; \lim_{z \to \infty} \gamma(z) \cdot y = x \}.
\]
Then a theorem of Bialynicki-Birula (\cite{BB}) asserts that each $Y_x^-$ is a locally closed $T$-stable subvariety of $\widetilde{X_{\lambda}}$ isomorphic to an affine space. Our choice of $\gamma$ implies that
\[
F = \pi_{\lambda}^{-1}(x_{\mu}) = \bigsqcup_{x \in F^T} Y_x.
\]

Moreover, by assumption we can find a $T$-equivariant embedding
\[
\widetilde{X_{\lambda}} \hookrightarrow \mathbb{P}(V)
\]
and we may decompose $V \cong \bigoplus V_i$ where $V_i$ denotes the
$i^{th}$ weight space of the $\mathbb{C}^*$-action on $V$ given by
$\gamma$. If we set $V_{\le i} = \bigoplus_{j \le i} V_i$ then it is
straightforward to check that the filtration
\[
\emptyset \subset \dots \subset \mathbb{P}(V_{\le i}) \subset \mathbb{P}(V_{\le i+1}) 
\subset  \dots \subset \mathbb{P}(V)
\]
induces filtrations of $\widetilde{X_{\lambda}}$ and $F$ by closed
subvarieties such that each successive difference is a disjoint union
of Bialynicki-Birula cells. A simple induction (see, for example,
\cite{Fulton} for the non-equivariant case) shows that both
$\THypgr(\omega_{\widetilde{X_{\lambda}}})$ and $\THypgr(\omega_F)$
are free $S_k$-modules with basis given by the equivariant fundamental
classes of closures of the Bialynicki-Birula cells. The two
statements of the lemma then follow.
\end{proof}

\begin{proposition} \label{prop:BBres}
With notation as above we have:
\begin{enumerate}
\item $\pi_{\lambda *} \ul{k}_{\widetilde{X_{\lambda}}}$ is parity and its support is equal to $\overline{X_{\lambda}}$.
\item For all $\mu \le \lambda$ the restriction homomorphism
\[
\THypgr(\pi_{\lambda *} \ul{k}_{\widetilde{X_{\lambda}}})
\to \THypgr( (\pi_{\lambda *} \ul{k}_{\widetilde{X_{\lambda}}})_{x_{\mu}}) 
\]
is surjective.
\end{enumerate}

\end{proposition}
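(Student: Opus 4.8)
The plan is to deduce both statements from Proposition \ref{prop:BB} by combining proper base change with Verdier duality. Throughout write $\pi=\pi_\lambda$ and $\widetilde{X_\lambda}$ for the source of the resolution, which is smooth and projective by (R1).

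For (1), the claim about the support is immediate: $\pi$ is proper and surjective onto $\overline{X_\lambda}$, so $\supp(\pi_\ast\ul k_{\widetilde{X_\lambda}})\subseteq\overline{X_\lambda}$, and by proper base change the stalk of $\pi_\ast\ul k_{\widetilde{X_\lambda}}$ at a point $x$ is $H^\bullet(\pi^{-1}(x),k)$, which is nonzero exactly when $x\in\overline{X_\lambda}$. For parity, one first notes that $\pi_\ast\ul k_{\widetilde{X_\lambda}}$ is Verdier self-dual up to the shift $[2\dim\widetilde{X_\lambda}]$: indeed $\DD\ul k_{\widetilde{X_\lambda}}\cong\ul k_{\widetilde{X_\lambda}}[2\dim\widetilde{X_\lambda}]$ by smoothness, and $\DD$ commutes with $\pi_\ast=\pi_!$ since $\pi$ is proper. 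As $\DD$ exchanges $i_\mu^\ast$ and $i_\mu^!$ and sends even complexes to even complexes, it suffices to show that $i_\mu^\ast\pi_\ast\ul k_{\widetilde{X_\lambda}}$ is $\ast$-even for every $\mu$. By (R3) and (S1) this restriction is a direct sum of shifted constant sheaves on $X_\mu\cong\DC^{d_\mu}$, so it is even once its stalk at $x_\mu$ sits in even degrees; by proper base change that stalk is $H^\bullet(F,k)$ with $F=\pi^{-1}(x_\mu)$, and the affine paving of $F$ produced in the proof of Proposition \ref{prop:BB} shows $H^\bullet(F,k)$ is concentrated in even degrees. This proves (1).

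For (2), I would first identify the restriction homomorphism. One has $\THypgr(\pi_\ast\ul k_{\widetilde{X_\lambda}})=H^\bullet_T(\widetilde{X_\lambda},k)$, and by equivariant proper base change $(\pi_\ast\ul k_{\widetilde{X_\lambda}})_{x_\mu}$ is $\pi_{F\ast}\ul k_F$ in $D^b_T(\{x_\mu\},k)$, so $\THypgr((\pi_\ast\ul k_{\widetilde{X_\lambda}})_{x_\mu})=H^\bullet_T(F,k)$; under these identifications the restriction homomorphism becomes pullback $H^\bullet_T(\widetilde{X_\lambda},k)\to H^\bullet_T(F,k)$ along $F\hookrightarrow\widetilde{X_\lambda}$. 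Now apply $\Hom_{S_k}(-,S_k)$ to the split injection of $S_k$-modules $\THypgr(\omega_F)\hookrightarrow\THypgr(\omega_{\widetilde{X_\lambda}})$ of Proposition \ref{prop:BB}(2): this yields a split surjection $\Hom_{S_k}(\THypgr(\omega_{\widetilde{X_\lambda}}),S_k)\twoheadrightarrow\Hom_{S_k}(\THypgr(\omega_F),S_k)$. Equivariant Poincar\'e duality for the smooth projective $\widetilde{X_\lambda}$ identifies the source with $H^\bullet_T(\widetilde{X_\lambda},k)$ up to shift, and the compact space $F$ — whose equivariant cohomology and equivariant Borel--Moore homology are free $S_k$-modules with dual bases given by the closures of its affine cells — satisfies the analogous duality identifying the target with $H^\bullet_T(F,k)$ up to shift. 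Since pullback is dual to proper pushforward under these pairings, the split surjection coincides with the restriction homomorphism, giving (2).

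The main obstacle is the last identification: verifying, for the compact but in general singular fibre $F$, that the $S_k$-dual of $\THypgr(\omega_F)$ is $H^\bullet_T(F,k)$ and that this duality intertwines the pushforward $\THypgr(\omega_F)\to\THypgr(\omega_{\widetilde{X_\lambda}})$ with the restriction map. This rests entirely on the affine paving of $F$ already established in Proposition \ref{prop:BB}: running the same induction over that paving shows both modules are free with dual cell bases and that the natural pairings are perfect and compatible with $i^\ast$ and $i_\ast$. Should this bookkeeping prove cumbersome, a more pedestrian route avoids duality altogether: the proof of Proposition \ref{prop:BB} produces the paving of $F$ as the restriction of a paving of $\widetilde{X_\lambda}$, so the equivariant fundamental classes of the cell closures of $F$ are the images of those of $\widetilde{X_\lambda}$, and an induction over the filtration gives surjectivity of $H^\bullet_T(\widetilde{X_\lambda},k)\to H^\bullet_T(F,k)$ directly.
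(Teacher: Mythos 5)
Your proof follows essentially the same strategy as the paper's: deduce self-duality of $\pi_{\lambda*}\ul k_{\widetilde{X_\lambda}}$ from smoothness and properness, use proper base change to reduce to a statement about the fibre $F$, and invoke the paving of $F$ from Proposition~\ref{prop:BB}; and for part~(2) dualise the split injection $\THypgr(\omega_F)\hookrightarrow\THypgr(\omega_{\widetilde{X_\lambda}})$. The only real difference is one of orientation: the paper reduces parity to $!$-evenness and applies Proposition~\ref{prop:BB}(1) literally (since $i_{x_\mu}^!\pi_{\lambda*}\ul k$ is, up to shift, $\pi_{F*}\omega_F$, whose hypercohomology is exactly $\THypgr(\omega_F)$), whereas you reduce to $*$-evenness, landing on $H^\bullet_T(F,k)$ instead.

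There is, however, a genuine gap in your argument for part~(1). You assert that ``by (R3) and (S1) this restriction is a direct sum of shifted constant sheaves on $X_\mu\cong\DC^{d_\mu}$.'' Constructibility plus triviality of equivariant local systems only tells you the cohomology sheaves of $i_{X_\mu}^*\pi_{\lambda*}\ul k$ are constant; it does not give formality. In the equivariant derived category $D^b_T(X_\mu,k)$ the group $\Ext^2_T(\ul k,\ul k)\cong S_k^2$ is nonzero, so a constructible complex with constant cohomology sheaves need not split. This is exactly why the paper interposes the d\'evissage step: one must first show that the equivariant hypercohomology of the (co)stalk is a free $S_k$-module, and then conclude that it is a sum of shifted constant sheaves. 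Relatedly, you check that the non-equivariant stalk $H^\bullet(F,k)$ sits in even degrees; what is actually needed is that $H^\bullet_T(F,k)$ is a \emph{free graded $S_k$-module} concentrated in even degrees. Both points are repairable: the same induction over the affine paving of $F$ that proves Proposition~\ref{prop:BB}(1) for $\THypgr(\omega_F)$ also shows that $H^\bullet_T(F,k)$ is free and even, and then the d\'evissage gives the required splitting. But as written, the citation of Proposition~\ref{prop:BB}(1) is not quite on point (it is stated for $\THypgr(\omega_F)$, not for $H^\bullet_T(F,k)$), and the formality assertion is unjustified.

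Your treatment of part~(2) is the paper's argument spelled out in more detail, and your concern about identifying the $S_k$-dual of $\THypgr(\omega_F)$ with $H^\bullet_T(F,k)$ for the possibly singular compact fibre is a fair one; the paper passes over it with ``dual up to a shift.'' The ``more pedestrian route'' you offer — extending the paving-based induction to get surjectivity of the restriction directly — is a valid and arguably cleaner alternative, though not the one the paper uses.
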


\begin{proof} The support claim follows from the surjectivity of $\pi_{\lambda}$. We now explain why $\pi_{\lambda*} \ul{k}_{\widetilde{X_{\lambda}}}$ is parity. As $\pi_{\lambda}$ is proper and $\widetilde{X_\lambda}$ is smooth, $\pi_{\lambda*} \ul{k}_{\widetilde{X_{\lambda}}}$ is self-dual up to a shift and so it is enough to show that $\pi_{\lambda*} \ul{k}_{\widetilde{X_{\lambda}}}$ is $!$-even. As $\pi_{\lambda*} \ul{k}_{\widetilde{X_{\lambda}}}$ is constructible with respect to the $\Lambda$-stratification, it is enough to show that, for all $\mu$, $i_{x_{\mu}}^! \pi_{\lambda*} \ul{k}_{\widetilde{X_{\lambda}}}$ is a direct sum of constant sheaves concentrated in even degrees, where $i_{x_\mu}$ denotes the inclusion $ \{ x_{\mu} \} \hookrightarrow X$. A devissage argument shows that this is the case if and only if $\THypgr( i_{x_{\mu}}^! \pi_{\lambda*} \ul{k}_{\widetilde{X_{\lambda}}})$ is a free $S_k$-module.

By proper base change $i_{x_{\mu}}^! \pi_{\lambda*} \ul{k}_{\widetilde{X_{\lambda}}}$ is isomorphic (up to a shift) to $\pi_{\lambda *}\omega_{F}$. Hence it is enough to show that $\THypgr(\omega_{F})$ is a free $S_k$-module concentrated in even degrees. This is the case by Proposition \ref{prop:BB}(1) above.

For the second statement of the proposition note that the restriction homomorphism $\THypgr(\pi_{\lambda *} \ul{k}_{\widetilde{X_{\lambda}}}) \to \THypgr( (\pi_{\lambda *} \ul{k}_{\widetilde{X_{\lambda}}})_{x_{\mu}})$ is dual (again, up to a shift) to the canonical map $\THypgr(\omega_F) \to \THypgr(\omega_{\widetilde{X_{\lambda}}})$ which is a split injection by Proposition \ref{prop:BB}(2).
\end{proof}

\begin{proof}[Proof of Theorem \ref{thm:parityexists}]
By Proposition \ref{prop:BBres}, $\pi_{\lambda *} \ul{k}_{\widetilde{X_{\lambda}}} \in D_T^b(X,k)$ is parity. If we let $\CQ$ denote an indecomposable summand of $\pi_{\lambda *} \ul{k}_{\widetilde{X_{\lambda}}}$ containing $X_\lambda$ in its support then $\CQ$ is also parity and, by Theorem \ref{thm-parityunique}, we have $\CQ_{X_{\lambda}} \cong \ul{k}_{X_{\lambda}}[i]$ for some integer $i$. It follows that we can take $\CP(\lambda) := \CQ[-i]$.

Another consequence of Theorem \ref{thm-parityunique} is that any indecomposable parity sheaf $\CP(\lambda)$ occurs as a direct summand of $\pi_{\lambda *} \ul{k}_{\widetilde{X_{\lambda}}}[i] \in D_T^b(X,k)$ for some $i$. Hence, to show Part (2) of the theorem it is enough to check that the map
\[
\THypgr(\pi_{\lambda *} \ul{k}_{\widetilde{X_{\lambda}}})
\to \THypgr( (\pi_{\lambda *} \ul{k}_{\widetilde{X_{\lambda}}})_{x_{\mu}}) 
\]
is surjective. This is the case by Proposition \ref{prop:BBres}.

By Proposition \ref{prop:parityselfdual} we have $\DD \CP(\lambda) \cong
  \CP(\lambda)[2 \dim X_{\lambda}]$. We also know that
  $\THypgr(\CP(\lambda))$ is a free $S_k$-module by Proposition
  \ref{prop:BB} (recall that $\omega_{\widetilde{X_{\lambda}}} \cong
  \ul{k}_{\widetilde{X_{\lambda}}}$ because $\widetilde{X_{\lambda}}$
  is smooth). Hence
\[
\Hom^{\bullet}_{S_k} (\THypgr( \CP(\lambda)), S_k) \cong \THypgr(
\CP(\lambda))[2\dim X_{\lambda}].
\]
as $\overline{X_{\lambda}}$ is proper.
\end{proof}

\subsection{Parity sheaves and the functor $\DW$}

In this section we begin discussing the relationship between parity sheaves and the localisation functor $\DW$. In particular, we show that $\DW$  is fully faithful on morphisms of all degrees between parity sheaves.

For the rest of this section we assume (A1)-(A4a/b) and (S1), (S2).

\begin{proposition} \label{prop:globparity} Let $\CP(\lambda)$ be a parity sheaf. Then the localisation homomorphism $\THypgr(\CP(\lambda)) \to \THypgr(\CP(\lambda)_{X^T})$ identifies $\THypgr(\CP(\lambda))$ with the global sections of $\DW(\CP(\lambda))$.
\end{proposition}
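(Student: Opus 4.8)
The plan is to deduce this directly from Theorem \ref{theorem-loc2} applied to $\CF := \CP(\lambda)$, so that the only real work is to check that the hypotheses of that theorem are met for a parity sheaf. All the standing assumptions (A1)--(A4a/b) and (S1), (S2) are in force in this section, so what remains is to verify that both $\THypgr(\CP(\lambda))$ and $\THypgr(\CP(\lambda)_{X^T})$ are free $S_k$-modules.

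The freeness of $\THypgr(\CP(\lambda))$ is precisely Proposition \ref{prop-propofparity}(1). For $\THypgr(\CP(\lambda)_{X^T})$, I would use that $X^T$ is finite, so that $\THypgr(\CP(\lambda)_{X^T}) = \bigoplus_{x \in X^T} \THypgr(\CP(\lambda)_x)$, and it suffices to treat one fixed point $x = x_\mu$ (the fixed point of the stratum $X_\mu$) at a time. The inclusion $\{x_\mu\} \hookrightarrow X$ factors through $X_\mu$; by (S1) together with the parity hypothesis, $i_\mu^* \CP(\lambda)$ is a direct sum of shifted constant sheaves on $X_\mu \cong \DC^{d_\mu}$ concentrated in degrees of a single parity, and restricting further to the point $x_\mu$ shows that $\CP(\lambda)_{x_\mu}$ is a direct sum of shifts of $\ul{k}_{\{x_\mu\}}$. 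Hence $\THypgr(\CP(\lambda)_{x_\mu})$ is a direct sum of shifts of $H_T^\bullet(\pt,k) = S_k$, and in particular free.

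With both freeness statements in hand, Theorem \ref{theorem-loc2} gives $\THypgr(\CP(\lambda)) = \Gamma(\DW(\CP(\lambda)))$ as submodules of $\bigoplus_{x \in X^T} \THypgr(\CP(\lambda)_x) = \bigoplus_{x \in X^T} \DW(\CP(\lambda))^x$, the embedding of the left-hand side being the localisation homomorphism, which is injective by Theorem \ref{theorem-loc1} with $Z = X^T$. This is exactly the assertion of the proposition. I do not expect any serious obstacle: the statement is essentially a repackaging of the localisation theorem once the requisite freeness of equivariant cohomology --- the defining feature of parity sheaves --- has been recorded, the only point requiring a little care being the elementary verification that the stalk cohomologies at the torus-fixed points are free.
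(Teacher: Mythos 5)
Your proposal is correct and follows the paper's own proof essentially verbatim: apply Theorem \ref{theorem-loc2}, with freeness of $\THypgr(\CP(\lambda))$ coming from Proposition \ref{prop-propofparity}(1), and freeness of $\THypgr(\CP(\lambda)_{X^T})$ from the fact that the stalk of a parity sheaf at each $T$-fixed point is a direct sum of shifted equivariant constant sheaves. The extra detail you supply about factoring through $X_\mu$ and using (S1) simply unwinds the same observation the paper states in one line.
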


\begin{proof} In order to apply Theorem \ref{theorem-loc2} we only need to check that $\THypgr(\CP(\lambda))$ and $\THypgr(\CP(\lambda)_{X^T})$ are free $S_k$-modules. This is the case for $\THypgr(\CP(\lambda))$ by Proposition \ref{prop-propofparity}. For $\THypgr(\CP(\lambda)_{X^T})$ note that because $\CP(\lambda)$ is parity, the restriction of $\CP(\lambda)$ to any $T$-fixed point is a direct sum of equivariant constant sheaves.
\end{proof}

\begin{theorem}
\label{thm:fullyfaithful}
The functor $\DW$ is fully faithful when restricted to
  parity sheaves, i.e.~ if $\CP$ and $\CP^\prime$ are parity sheaves
  on $X$, then
$$
\Hom^\bullet_{D^b_T(X,k)}(\CP,\CP^\prime)\to\Hom_{\CG\catmod^\DZ_k}^\bullet(\DW(\CP),\DW(\CP^\prime))
$$
is an isomorphism.
\end{theorem}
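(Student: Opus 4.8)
The plan is to establish the theorem by induction on the size of the support, using the exact sequences of Lemma \ref{lemma-ses} on both the geometric side and the moment-graph side, and checking that $\DW$ is compatible with them. First I would reduce to the case where $\CP$ and $\CP'$ are both even (or both odd), since parity sheaves decompose into even and odd parts and $\Hom^\bullet$ is additive; a shift changes nothing. Then pick $\lambda \in \Lambda$ maximal in the (finite, since the supports are closed) union of the supports of $\CP$ and $\CP'$, so that $\CI = \{\lambda\}$ is closed, $\CJ = \Lambda \setminus \{\lambda\}$ is open, and both sheaves restrict on $X_\CJ$ to parity sheaves on the smaller stratified variety $X_\CJ$. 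Lemma \ref{lemma-ses}(2) then gives a short exact sequence
$$
0\to \Hom^\bullet(i^\ast\CP,i^!\CP')\to\Hom^\bullet(\CP, \CP')\to\Hom^\bullet(\CP_{\CJ},
\CP'_{\CJ})\to 0.
$$

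The middle and right terms are handled by induction (the right term is a $\Hom$-space between parity sheaves on $X_\CJ$, to which the theorem applies by the inductive hypothesis, provided one checks that the moment graph $\CG_{X_\CJ}$ is the full subgraph of $\CG_X$ on the vertices in $X_\CJ^T$, so that sections of $\DW(\CP_\CJ)$ are computed over that subgraph). The left term $\Hom^\bullet(i^\ast\CP, i^!\CP')$: since $\CP$ is even and $\CP'$ is even, $i^\ast\CP$ and $i^!\CP'$ are sums of (shifted) constant sheaves on the affine space $X_\lambda$, so this $\Hom$-space is computed in terms of $S_k = \THypgr(\CP_{x_\lambda})$ and $\THypgr(i^!\CP'_{x_\lambda})$ — essentially, it is the space of maps between the stalk of $\CP$ at $x_\lambda$ and the costalk of $\CP'$ at $x_\lambda$, which by the construction of $\DW$ (and Proposition \ref{prop:attr} identifying $\THypgr(\CP_{E \cup \{x_\lambda\}})$ with $\THypgr(\CP_{x_\lambda})$) one identifies with $\Hom^\bullet_{S_k}(\DW(\CP)^\lambda, \DW(\CP')_\lambda)$, where $\DW(\CP')_\lambda$ is the costalk of the moment-graph sheaf. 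The parallel short exact sequence on the moment-graph side comes from the fact that a morphism $\DW(\CP) \to \DW(\CP')$ restricts to a morphism of the sheaves on $\CG_{X_\CJ}$, with kernel exactly those morphisms supported at the vertex $\lambda$, i.e.\ landing in the costalk $\DW(\CP')_\lambda$; here one uses Proposition \ref{prop-propofparity}(3) (surjectivity of $\rho_{\lambda, E}$ with kernel $\alpha_E \THypgr(\CP_{x_\lambda})$) to control how a germ at $\lambda$ extends, and the freeness statements together with Proposition \ref{prop:globparity} to see that $\Gamma(\DW(\CP)) = \THypgr(\CP)$ so that the two exact sequences really match.

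Concretely, the steps in order are: (i) reduce to $\CP, \CP'$ even; (ii) set up the two short exact sequences (geometric via Lemma \ref{lemma-ses}, moment-graph via restriction to $\CG_{X_\CJ}$ and the costalk identification); (iii) construct a morphism of short exact sequences whose right-hand vertical map is the map of the theorem for $X_\CJ$ (an isomorphism by induction) and whose left-hand vertical map is an isomorphism by the explicit "one-stratum" computation; (iv) conclude by the five lemma that the middle vertical map is an isomorphism; (v) dispose of the base case, where $X$ is a single affine stratum and everything is a sum of shifted constant sheaves so $\Hom^\bullet$ on both sides is a direct sum of copies of $S_k$ and the comparison is immediate.

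The main obstacle I expect is step (iii): checking that $\DW$ really does carry the triangle \eqref{eq:!-exact} to a short exact sequence of moment-graph sheaves that is compatible, on $\Hom$-spaces, with the filtration of $\Hom_{\CG\catmod^\DZ_k}^\bullet$ by "order of vanishing at $\lambda$". In particular one must verify that the kernel of restriction $\Hom^\bullet(\DW(\CP), \DW(\CP')) \to \Hom^\bullet_{\CG_{X_\CJ}}(\dots)$ is precisely $\Hom^\bullet_{S_k}(\DW(\CP)^\lambda, \DW(\CP')_\lambda)$ — i.e.\ that a morphism of $\CG_X$-sheaves which vanishes on all of $\CG_{X_\CJ}$ is the same thing as an $S_k$-linear map from the stalk at $\lambda$ into the costalk at $\lambda$, and that every such map extends (uniquely) to a genuine morphism of sheaves. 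The existence of the extension is the delicate point, and it is exactly here that one needs the surjectivity in Proposition \ref{prop-propofparity}(3) together with the fact (Proposition \ref{prop:globparity}, hence Theorem \ref{theorem-loc2}) that global sections of $\DW(\CP')$ recover $\THypgr(\CP')$, so that a compatible family of germs assembles into an honest section. Once this compatibility is in place, the five lemma finishes the argument.
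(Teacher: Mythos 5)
Your strategy is the same as the paper's: reduce to the case where $\CP$ and $\CP'$ are both even, peel off a single stratum, use Lemma \ref{lemma-ses}(2) to obtain a short exact sequence of $\Hom$-spaces, match it against a parallel sequence on the moment-graph side (the kernel being identified via Proposition \ref{prop-propofparity}(3), Proposition \ref{prop:globparity} and Lemma \ref{lemma-ses}(1) with $\Hom_{S_k}(\DW(\CP)^\lambda, \DW(\CP')_\lambda)$), and then apply the five lemma together with induction on the number of strata. Your discussion in the last paragraph of the delicate point --- that a morphism of $\CG_X$-sheaves vanishing on $\CG_{X_\CJ}$ is the same as an $S_k$-linear map from stalk into costalk, and that such a map extends --- is exactly the content of the left-hand vertical isomorphism in the paper's proof.

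There is one slip that would derail the argument as literally written: you take $\lambda$ \emph{maximal} in (the union of supports in) $\Lambda$ and claim that $\CI = \{\lambda\}$ is closed. With the paper's conventions the order is $\lambda \le \mu$ iff $X_\lambda \subset \overline{X_\mu}$, so ``closed'' means downward-closed; a maximal $\lambda$ makes $\{\lambda\}$ upward-closed, i.e.\ \emph{open}, and $X_\lambda$ is then the open dense stratum (of the union of supports), not a closed one. You need $\lambda$ \emph{minimal}, as in the paper, so that $X_\lambda$ is a closed stratum, $\CJ = \Lambda\setminus\{\lambda\}$ is open, and the open-closed decomposition $j : X_\CJ \hookrightarrow X \hookleftarrow X_\lambda : i$ is the one for which Lemma \ref{lemma-ses}(2) applies. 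With that correction the proposal reproduces the paper's proof.
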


\begin{proof} Without loss of generality we can assume that both $\CP$ and $\CP^\prime$ are even. Let $\lambda\in\Lambda$ be a minimal element and set $\CJ=\Lambda\setminus\{\lambda\}$. Denote by $j\colon X_{\CJ}=X\setminus X_\lambda\to X$ the corresponding open inclusion and by $i\colon X_\lambda\to X$ the complementary closed inclusion. Then we have a distinguished triangle
$$
i_!i^!\CP^\prime\to \CP^\prime\to j_\ast j^\ast\CP^\prime\stackrel{[1]}\to
$$
which gives rise, by Lemma \ref{lemma-ses}, to a short exact sequence
$$
0\to \Hom^\bullet(i^\ast\CP,i^!\CP^\prime)\to\Hom^\bullet(\CP, \CP^\prime)\to\Hom^\bullet(\CP_{\CJ},
\CP^\prime_{\CJ})\to 0
$$
of graded spaces. The map $\Hom^\bullet(\CP, \CP^\prime)\to\Hom^\bullet(\CP_{\CJ},
\CP^\prime_{\CJ})$ is induced by the restriction to an open subspace, hence we can fit the above short exact sequence into a commutative diagram

\centerline{
\xymatrix@C=0.3cm{
0\ar[r]&\Hom(i^\ast\CP,i^!\CP^\prime)\ar[r]\ar[d]&\Hom(\CP, \CP^\prime)\ar[r]\ar[d]&\Hom(\CP_{\CJ},
\CP^\prime_{\CJ})\ar[d] \ar[r]&0\\
0\ar[r]&K\ar[r]&\Hom(\DW(\CP), \DW(\CP^\prime))\ar[r]&\Hom^\bullet(\DW(\CP_{\CJ}),
\DW(\CP^\prime_{\CJ}))\ar[r]&0.
}
}
\noindent
 As $\CP_{\CJ}$ and $\CP^\prime_{\CJ}$ are parity sheaves on $X_{\CJ}$ we can, by induction on the number of strata, assume that the vertical map on the right is an isomorphism. Hence we can finish the proof by showing that the vertical map on the left is an isomorphism as well.

Now $K$ is the space of all morphisms $f\colon \DW(\CP)\to\DW(\CP^\prime)$ with $f^\mu=0$ and $f^E=0$ for vertices $\mu$ and edges $E$ of $\CJ$. By Proposition \ref{prop-propofparity}, (3),  it identifies with the set of homomorphisms from the stalk $\DW(\CP)^\lambda$ into the costalk $\DW(\CP^\prime)_\lambda$.
By definition we have $\DW(\CP)^\lambda=\THypgr(i^\ast \CP)$. Now let us look at the short exact sequence
$$
0\to\THypgr(i^!\CP^\prime)\to\THypgr(\CP^\prime)\to\THypgr(\CP^\prime_{\CJ})\to 0
$$
given by Lemma \ref{lemma-ses}. By Proposition \ref{prop:globparity}, $\THypgr(\CP^\prime)$ and $\THypgr(\CP^\prime_{\CJ})$ can be identified with the sections of $\DW(\CP^\prime)$ over $\Lambda$ and $\CJ$, respectively. Hence we may identify
$$
\THypgr(i^!\CP^\prime)=\DW(\CP^\prime)_\lambda.
$$
As $i^\ast\CP$ and $i^!\CP^\prime$ are free sheaves on $X_\lambda$, we deduce from the above that the homomorphism $\Hom(i^\ast\CP,i^!\CP^\prime)\to K$ in the commutative diagram above is an isomorphism as well.
\end{proof}

\section{Braden-MacPherson sheaves on a moment graph}
We return now to the theory of sheaves on a moment graph. We first
motivate the definition of the Braden-MacPherson sheaves by considering the
problem of extending local sections. Then we prove one of our main results, namely that the functor $\DW$ sends
parity sheaves to Braden-MacPherson sheaves.

\subsection{Extending local sections}

Let $\CG=(\CV,\CE,\alpha)$ be a moment graph. Suppose that  each edge is given  a
direction. Then, for $x,y\in\CV$, we set $x\varle y$ if and only if there is a
directed path from $x$ to $y$.  We assume that this determines a partial order on $\CV$, i.e.~ we assume that there are no non-trivial closed directed paths. We call this datum a {\em directed moment graph}.

Recall that we call a subset $\CJ$ of $\CV$ open if it contains with any element all elements that are larger in the partial order, i.e. all elements that can be reached by a directed path. For a sheaf $\SM$ and an open subset $\CJ$ of $\CV$ we call an element in $\Gamma(\CJ,\SM)$ a {\em local section}.

Now we want to find some conditions on $\SM$ that ensure that each local section can be extended to a global section, i.e. which ensure that the  restriction  $\Gamma(\SM)\to\Gamma(\CJ,\SM)$ from global to local sections is surjective for any open set $\CJ$. For this we need the following definitions.

For a vertex $x$ of $\CG$ we define
$$
\CV_{\delta x}:=\{y\in\CV\mid\text{ there is an edge $E\colon x\to y$}\}.
$$
So $\CV_{\delta x}$ is the subset of vertices $y$ that are bigger than $x$ in the partial order and that are connected to $x$ by an edge. We denote by 
$$
\CE_{\delta x}:=\{E\in \CE\mid E\colon x\to y\}
$$
the set of the corresponding edges. Then there is an obvious correspondence $\CE_{\delta x}\stackrel{\sim}\to\CV_{\delta x}$ (as we assume that two vertices are connected by at most one edge).
 For a sheaf $\SM$ and a vertex $x$ we define the map
$$
u_x\colon \Gamma(\{>x\},\SM)\to\bigoplus_{E\in\CE_{\delta x}}\SM^E
$$
as the composition
$$
\Gamma(\{>x\},\SM)\subset\bigoplus_{y>x}\SM^y \stackrel{p}\to\bigoplus_{y\in\CV_{\delta x}} \SM^y\stackrel{\rho}\to\bigoplus_{E\in\CE_{\delta x}}\SM^E,
$$
where $p$ is the projection along the decomposition and $\rho=\bigoplus_{y\in\CV_{\delta x}}\rho_{y,E}$. We let
$$
\SM^{\delta x}:=u_x(\Gamma(\{>x\},\SM))\subset\bigoplus_{E\in\CE_{\delta x}}\SM^E
$$
be the image of this map. Moreover, we define the map
$$
d_x:=(\rho_{x,E})_{E\in\CE_{\delta x}}^T\colon \SM^x\to\bigoplus_{E\in\CE_{\delta x}}\SM^E.
$$
The connection of the above definitions with the problem of extending local sections is the following.
Suppose that $m^\prime\in \Gamma(\{>x\},\SM)$ is a section and that $m_x\in \SM^x$. Then the concatenated element $(m_x,m^\prime)\in \bigoplus_{y\ge x}\SM^y$ is a section over $\{\ge x\}$ if and only if $u_x(m^\prime)=d_x(m_x)$.

\begin{lemma}\label{lemma-extension} For a sheaf $\SM$ on the moment graph $\CG$ the following are equivalent:
\begin{enumerate}
\item For any open subsets $\CJ^\prime\subset\CJ$ of $\CV$ the restriction map $\Gamma(\CJ,\SM)\to\Gamma(\CJ^\prime,\SM)$ is surjective.
\item For any vertex $x\in\CV$, the restriction map $\Gamma(\{\ge x\},\SM)\to \Gamma(\{>x\},\SM)$ is surjective.
\item For any $x\in\CV$, the map $d_x\colon \SM^x\to \bigoplus_{E\in\CE_{\delta x}} \SM^E$ contains $\SM^{\delta_x}$ in its image.
\end{enumerate}
\end{lemma}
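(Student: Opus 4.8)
The plan is to establish the cyclic chain of implications $(1) \Rightarrow (2) \Rightarrow (3) \Rightarrow (1)$. The first implication is immediate: both $\{\ge x\}$ and $\{>x\}$ are open subsets of $\CV$ with $\{>x\} \subset \{\ge x\}$, so $(2)$ is the special case $\CJ = \{\ge x\}$, $\CJ' = \{>x\}$ of $(1)$.

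For $(2) \Rightarrow (3)$, fix a vertex $x$ and a local section $m' \in \Gamma(\{>x\},\SM)$. By $(2)$ we may lift $m'$ to a section over $\{\ge x\}$, necessarily of the form $(m_x, m')$ for some $m_x \in \SM^x$. As observed immediately before the lemma, $(m_x, m')$ is a section over $\{\ge x\}$ if and only if $u_x(m') = d_x(m_x)$; here one uses that every edge contained in $\{\ge x\}$ and incident to $x$ is directed \emph{out of} $x$, since an edge $E\colon y \to x$ with $y > x$ would contradict antisymmetry of the partial order. Hence $u_x(m') \in \im d_x$, and letting $m'$ range over $\Gamma(\{>x\},\SM)$ yields $\SM^{\delta x} = u_x(\Gamma(\{>x\},\SM)) \subset \im d_x$.

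The substantive implication is $(3) \Rightarrow (1)$, which I would prove by enlarging $\CJ'$ one vertex at a time. Given open subsets $\CJ' \subset \CJ$ and a local section $s' \in \Gamma(\CJ',\SM)$, choose $x$ maximal in $\CJ \setminus \CJ'$. Since $\CJ$ is open, every $y > x$ lies in $\CJ$, and hence in $\CJ'$ by maximality of $x$; thus $\{>x\} \subset \CJ'$ and $\CJ' \cup \{x\}$ is again open. The key point is that the only edges incident to $x$ whose other endpoint lies in $\CJ'$ are those in $\CE_{\delta x}$: an edge directed \emph{into} $x$ from a vertex $z \in \CJ'$ would force $x \in \CJ'$ by openness. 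Therefore, writing $s'|_{\{>x\}} \in \Gamma(\{>x\},\SM)$ for the restriction of $s'$, the concatenation $(m_x, s')$ is a section over $\CJ' \cup \{x\}$ precisely when $d_x(m_x) = u_x(s'|_{\{>x\}})$, and such an $m_x$ exists because $u_x(s'|_{\{>x\}}) \in \SM^{\delta x} \subset \im d_x$ by $(3)$. Iterating over the vertices of $\CJ \setminus \CJ'$ (transfinitely, via Zorn's lemma, if $\CV$ is infinite) extends $s'$ to all of $\CJ$.

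The main obstacle is the bookkeeping in this last step: one must verify that passing from $\CJ'$ to $\CJ' \cup \{x\}$ introduces exactly the compatibility conditions captured by $d_x$ and $u_x$ and no others. This is precisely where directedness of $\CG$ is used — the absence of nontrivial closed directed paths prevents any edge from pointing into a newly adjoined maximal vertex from the portion of the open set already built up, so that the only new conditions involve the edges of $\CE_{\delta x}$ and the choice of $m_x$ is governed solely by membership of $u_x(s'|_{\{>x\}})$ in $\im d_x$.
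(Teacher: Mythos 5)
Your proof is correct and follows essentially the same route as the paper's: the paper proves $(1)\Leftrightarrow(2)$ and then $(2)\Leftrightarrow(3)$, while you establish the cycle $(1)\Rightarrow(2)\Rightarrow(3)\Rightarrow(1)$, but the underlying arguments (the characterization of extending a section over $\{>x\}$ to $\{\ge x\}$ via $u_x$ and $d_x$, and the one-vertex-at-a-time enlargement of open sets using the fact that every edge joining $x$ to a vertex of $\CJ'$ must lie in $\CE_{\delta x}$) are identical. Your version is if anything slightly more explicit about the openness argument that the paper asserts briefly.
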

\begin{proof} Clearly property (2) is a special case of property (1). Let us prove the converse, so let us assume that (2) holds. It is enough to prove property (1) in the special case that $\CJ=\CJ^\prime\cup\{x\}$ for a single element $x$, since we get the general case from this by induction. So let $m=(m_y)$ be a section in $\Gamma(\CJ\setminus\{x\}, \SM)$. Since $\{>x\}\subset \CJ\setminus\{x\}$ we can restrict $m$ and get a section $m^\prime$ in $\Gamma(\{>x\},\SM)$. By assumption there is an element $m_x\in \SM^x$ such that $(m_x,m^\prime)$ is a section over $\{\ge x\}$. As $x$ is not connected to any vertex in $\CJ\setminus\{\ge x\}$ it follows  that $(m_x, m)$ is a section over $\CJ$. Hence (2) implies (1).

Let us show that (2) is equivalent to (3). Now (2) means that for any section $m$ over $\{>x\}$ we can find $m_x\in \SM^x$ such that $(m_x,m)$ is a section over $\{\ge x\}$. But $(m_x,m)$ is a section over $\{\ge x\}$ if and only if $d_x(m_x)=u_x(m)$. Hence, a section $m$ over $\{>x\}$ can be extended to the vertex $x$ if and only of $u_x(m)$ is contained in the image of $d_x$.
\end{proof}

For later use we prove the following statement.

\begin{lemma} \label{lemma-suronstalks} Let $x$ be a vertex of $\CG$ and $\SM$ a sheaf on $\CG$. Then the following are equivalent:
\begin{enumerate}
\item The map $\Gamma(\{\ge x\},\SM)\to \SM^x$ is surjective.
\item The image of the map $d_x\colon \SM^x\to \bigoplus_{E\in\CE_{\delta x}}\SM^E$ is contained in $\SM^{\delta x}$.
\end{enumerate}
\end{lemma}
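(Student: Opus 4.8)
The plan is to obtain both implications by unwinding the definitions of $u_x$, $d_x$ and $\SM^{\delta x}$, using the observation recorded just before Lemma \ref{lemma-extension}: for a local section $m'\in\Gamma(\{>x\},\SM)$ and an element $m_x\in\SM^x$, the concatenation $(m_x,m')$ belongs to $\Gamma(\{\ge x\},\SM)$ if and only if $u_x(m')=d_x(m_x)$. As a preliminary remark I would note that $\{\ge x\}$ and $\{>x\}$ are open subsets of $\CV$, that $\{>x\}\subset\{\ge x\}$, and that every section $s\in\Gamma(\{\ge x\},\SM)$ is of the form $(m_x,m')$ with $m_x\in\SM^x$ its value at $x$ and $m'\in\Gamma(\{>x\},\SM)$ its restriction to $\{>x\}$; the map in~(1) sends $s$ to $m_x$.

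For $(1)\Rightarrow(2)$ I would fix an arbitrary $m_x\in\SM^x$, use the surjectivity in~(1) to produce a section $(m_x,m')\in\Gamma(\{\ge x\},\SM)$ with $m'\in\Gamma(\{>x\},\SM)$, and then invoke the observation to conclude $d_x(m_x)=u_x(m')\in u_x(\Gamma(\{>x\},\SM))=\SM^{\delta x}$. As $m_x$ was arbitrary this yields $\im(d_x)\subseteq\SM^{\delta x}$. For $(2)\Rightarrow(1)$ I would again fix $m_x\in\SM^x$; by~(2) we have $d_x(m_x)\in\SM^{\delta x}=u_x(\Gamma(\{>x\},\SM))$, so there is $m'\in\Gamma(\{>x\},\SM)$ with $u_x(m')=d_x(m_x)$, whence $(m_x,m')\in\Gamma(\{\ge x\},\SM)$ by the observation, and this section restricts to $m_x$ at $x$; so the map in~(1) is surjective.

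I do not anticipate a genuine obstacle: the argument is purely formal once the observation before Lemma \ref{lemma-extension} is available, since that observation already encodes the only nontrivial point, namely that the sole gluing conditions a section over $\{\ge x\}$ imposes at $x$ beyond those of its restriction to $\{>x\}$ are the equalities $\rho_{x,E}(m_x)=\rho_{y,E}(m_y)$ for the edges $E\colon x\to y$ in $\CE_{\delta x}$ (using that the partial order comes from directed paths and the graph has no loops), and these are precisely the components of the equation $d_x(m_x)=u_x(m')$.
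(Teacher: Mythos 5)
Your argument is correct and is essentially identical to the paper's own proof: both directions rest on the observation preceding Lemma~\ref{lemma-extension}, that an element $(m_x,m')\in\SM^x\times\Gamma(\{>x\},\SM)$ defines a section over $\{\ge x\}$ precisely when $d_x(m_x)=u_x(m')$, together with the definition $\SM^{\delta x}=u_x(\Gamma(\{>x\},\SM))$. No substantive difference in approach.
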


\begin{proof} Suppose that (1) holds and let $s\in \SM^x$. Then there is a section $m$ of $\SM$ over $\{\ge x\}$ with $m_x=s$. If we denote the restriction of $m$ to $\{>x\}$ by $m^\prime$, then this means that $d_x(s)=u_x(m^\prime)$. So $d_x(s)$ is contained in the image of $u_x$, which is $\SM^{\delta x}$.

Conversely, suppose that (2) holds and let $s\in \SM^x$. Then there is a section $m^\prime$ of $\SM$ over $\{>x\}$ such that $d_x(s)=u_x(m^\prime)$. Hence $(s,m^\prime)$ is a section over $\{\ge x\}$, hence $s$ is contained in the image of $\Gamma(\{\ge x\},\SM)\to \SM^x$.
\end{proof}

\subsection{Braden--MacPherson sheaves}
The most important class of sheaves on a moment graph is the following.
\begin{definition} \label{def-BMPsheaves} A sheaf $\SB$ on the moment graph $\CG$ is called a {\em Braden--MacPherson sheaf} if it satisfies the following properties:
\begin{enumerate}
\item For any $x\in\CV$, the stalk $\SB^x$ is a graded free $S_k$-module of finite rank and only finitely many $\SB^x$ are non-zero. 
\item For a directed edge $E\colon x\to y$ the map $\rho_{y,E}\colon \SB^y\to\SB^E$ is surjective with kernel $\alpha(E)\SB^y$,
\item For any open subset $\CJ$ of $\CV$ the map $\Gamma(\SB)\to \Gamma(\CJ,\SB)$ is surjective.
\item The map $\Gamma(\SB)\to \SB^x$ is surjective for any $x\in \CV$.
\end{enumerate}
\end{definition}

Here is a classification theorem.

\begin{theorem} Assume that $k$ is a local ring and suppose that the moment graph is such that for any vertex $w$ the set $\{\le w\}$ is finite. Then the following holds.
\begin{enumerate}
\item For any vertex $w$ there is an up to isomorphism unique Braden--MacPherson sheaf $\SB(w)$ on $\CG$ with the following properties:
\begin{itemize}
\item We have $\SB(w)^w\cong S_k$ and $\SB(w)^x= 0$ unless $x\le w$.
\item $\SB(w)$ is indecomposable in $\CG\catmod_k^\DZ$.
\end{itemize}
\item Let $\SB$ be a Braden--MacPherson sheaf. Then there are $w_1,\dots,w_n\in \CV$ and $l_1,\dots,l_n\in\DZ$ such that
$$
\SB\cong\SB(w_1)[l_1]\oplus \dots\oplus\SB(w_n)[l_n].
$$
The multiset $(w_1,l_1)$,\dots,$(w_n,l_n)$ is uniquely determined by $\SB$.
\end{enumerate}
\end{theorem}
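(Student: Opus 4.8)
The plan is to establish part (1) first — existence, uniqueness and indecomposability of $\SB(w)$ — and then bootstrap to the general decomposition in part (2) and its uniqueness via a Krull--Schmidt argument.

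For the existence of $\SB(w)$ I would proceed by downward recursion through the finite set $\{\le w\}$: order it as $x_1=w,x_2,\dots,x_N$ so that larger vertices come first, set $\SB(w)^w:=S_k$ and $\SB(w)^x:=0$ for $x\not\le w$. When a vertex $x$ is reached, all stalks $\SB(w)^y$ with $y>x$ are already defined, so condition (2) of Definition~\ref{def-BMPsheaves} forces the edge modules $\SB(w)^E:=\SB(w)^y/\alpha(E)\SB(w)^y$ for every edge $E\colon x\to y$, and hence also the submodule $\SB(w)^{\delta x}=u_x(\Gamma(\{>x\},\SB(w)))\subseteq\bigoplus_{E\in\CE_{\delta x}}\SB(w)^E$. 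I would then choose $d_x\colon\SB(w)^x\to\SB(w)^{\delta x}$ to be a graded projective cover; this is exactly where the hypothesis that $k$ is local enters, since then $S_k$ is local in the graded sense and finitely generated graded $S_k$-modules admit projective covers, which are finite graded free modules. Taking $\rho_{x,E}$ to be the components of $d_x$ (and $\rho_{x,E'}$ the quotient map $\SB(w)^x\to\SB(w)^x/\alpha(E')\SB(w)^x$ for edges $E'$ running down out of $x$) completes the recursion. By construction (1) and (2) hold, and since $\im d_x=\SB(w)^{\delta x}$ at every vertex, Lemmas~\ref{lemma-extension} and~\ref{lemma-suronstalks} give (3) and (4); the statements about $\SB(w)^w$ and its support are immediate.

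Next I would isolate the key lemma: if $\SM$ is a finite direct sum of shifted copies of a single $\SB(w)$ (so $w$ is the unique maximal vertex of its support) and $\varphi$ is a degree-zero endomorphism with $\varphi^w$ an isomorphism, then $\varphi$ is an isomorphism. One proves this by downward induction on vertices: if $\varphi^y$ is an isomorphism for all $y>x$ then $\varphi$ induces an automorphism of $\Gamma(\{>x\},\SM)$, hence of $\SM^{\delta x}$ and of each $\SM^E$ with $E\in\CE_{\delta x}$; since $d_x$ is a (minimal) projective cover and $d_x\circ\varphi^x=(\bigoplus_E\varphi^E)\circ d_x$, the standard fact that a lift of an automorphism along a projective cover is again an automorphism (using that $\SM^x$ is finitely generated over $S_k$, hence Hopfian) forces $\varphi^x$ to be an isomorphism. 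From this, $\End_{\CG\catmod_k^\DZ}(\SB(w))$ is local: $\varphi\mapsto\varphi^w$, regarded as multiplication by an element of $(S_k)_0=k$, is a surjective ring homomorphism onto a field whose kernel $I$ is a two-sided ideal with $\mathrm{id}-\varphi$ invertible for all $\varphi\in I$, so $I\subseteq\operatorname{rad}$; thus $\SB(w)$ is indecomposable. Uniqueness in (1) then follows by constructing, for two candidates $\SB,\SB'$, morphisms $f\colon\SB\to\SB'$ and $g\colon\SB'\to\SB$ with $f^w=g^w=\mathrm{id}$ — built by the same downward recursion, at each vertex lifting the required map $\SB^x\to(\SB')^{\delta x}$ through the surjection $d_x^{\SB'}$ using projectivity of $\SB^x$ — so that $gf$ and $fg$ have identity $w$-component and are isomorphisms by the key lemma.

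For part (2) I would induct on $\#\{x:\SB^x\neq0\}$. Pick $w$ maximal in the support; then $\Gamma(\{>w\},\SB)=0$, so $\SB^{\delta w}=0$. Writing the graded free module $\SB^w$ as $\bigoplus_{j=1}^r S_k[l_j]$ and repeating the construction of the uniqueness argument, I obtain morphisms $\Phi\colon\bigoplus_j\SB(w)[l_j]\to\SB$ and $\Psi\colon\SB\to\bigoplus_j\SB(w)[l_j]$ inducing mutually inverse isomorphisms on the $w$-stalk; by the key lemma $\Psi\Phi$ is an automorphism, hence $\SB\cong\im\Phi\oplus\ker\Psi$ with $\im\Phi\cong\bigoplus_j\SB(w)[l_j]$. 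A routine verification shows that a direct summand of a Braden--MacPherson sheaf is again one (stalks stay graded free by graded Nakayama, and (2)--(4) are inherited coordinatewise), and comparing $w$-stalks via graded ranks shows $(\ker\Psi)^w=0$, so $\ker\Psi$ has strictly smaller support and the induction hypothesis applies. Uniqueness of the multiset is then the Azumaya/Krull--Schmidt theorem: each $\SB(w_i)[l_i]$ is indecomposable with local endomorphism ring, and $\SB(w)[l]\cong\SB(w')[l']$ forces $w=w'$ (both equal the maximum of the support) and $l=l'$ (read off from the grading of the top stalk). The step I expect to be the main obstacle is the endomorphism lemma together with the "lift of an automorphism along a projective cover is an automorphism" assertion in the graded setting: this is what both indecomposability and the splitting in (2) rest on, and it is the precise point where the local hypothesis on $k$ — guaranteeing minimal presentations at every vertex — is genuinely used; the bookkeeping in the downward recursions (tracking which edge modules are already defined and checking compatibility with the maps $\rho$) is routine but must be done carefully.
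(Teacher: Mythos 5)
Your existence construction of $\SB(w)$ is the same as the paper's: downward recursion through $\{\le w\}$ via graded projective covers of $\SB(w)^{\delta x}$ at each vertex, with locality of $k$ guaranteeing that projective covers exist and are graded free, and with Lemmas \ref{lemma-extension} and \ref{lemma-suronstalks} delivering properties (3) and (4). Where you genuinely diverge is in how you obtain indecomposability, uniqueness, and the decomposition in part (2). The paper proves (2) first, by induction on open subsets $\CJ\subseteq\CV$: at each newly added minimal vertex $x$ it splits $\SB^x$ (which surjects onto $\SB^{\delta x}$) as the projective cover $\bigoplus\SB(w_i)^x[l_i]$ of $\SB^{\delta x}$ plus a graded free complement $R\subseteq\ker d_x$, which contributes fresh summands $\SB(x)[m_j]$; indecomposability, uniqueness of $\SB(w)$, and uniqueness of the multiset are then read directly off this explicit vertex-by-vertex extension. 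You instead isolate a structural lemma --- a degree-zero endomorphism of a finite sum of shifts of $\SB(w)$ with invertible $w$-stalk is invertible, proved by descending through the projective covers and using that a lift of an automorphism along a projective cover of a finitely generated module is again an automorphism --- and deduce that $\End(\SB(w))$ is local, giving indecomposability and uniqueness of $\SB(w)$ directly. For (2) you peel off the $\SB(w)$-isotypic part at the top $w$ of the support (after verifying that a direct summand of a Braden--MacPherson sheaf over local $k$ is again one) and finish with Azumaya/Krull--Schmidt. Both arguments pivot on exactly the same fact (projective cover of $\SM^{\delta x}$ at each vertex), but the paper's induction on open sets is more elementary and avoids invoking Krull--Schmidt, while your route has the advantage of producing the locality of $\End(\SB(w))$ --- a useful fact in its own right --- as a byproduct, and packages the splitting in (2) more conceptually. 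I find no gap in your argument.
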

\begin{remark} We need the locality assumption on $k$ in order to ensure that projective covers exist in the category of graded $S_k$-modules.
\end{remark}

\begin{proof} We first prove the existence part of statement (1). For $w\in\CV$ we define a sheaf $\SB(w)$ by the following inductive construction:
\begin{enumerate}
\item We start with setting $\SB(w)^w=S_k$ and $\SB(w)^x=0$ if $x\not\le w$.
\item If we have already defined $\SB(w)^y$, then we set, for each edge $E\colon x\to y$, 
$$
\SB(w)^E:=\SB(w)^y/\alpha(E) \SB(w)^y
$$
and we let $\rho_{y,E}\colon\SB(w)^y \to \SB(w)^E$ be the canonical map.
\item Suppose that we have already defined $\SB(w)^y$ for all $y$ in an open subset $\CJ$ and suppose that $x\in\CV$ is such that $\CJ\cup\{x\}$ is open as well. By step (2) we have also defined the spaces $\SB(w)^E$ for each edge $E\colon x\to y$ originating at $x$, as well as the maps $\rho_{y,E}\colon \SB(w)^y\to \SB(w)^E$. We now define $\SB(w)^x$ and the maps $\rho_{x,E}$ for those edges $E$. We can already calculate the sections of $\SB(w)$ over $\{>x\}$, as well as the $S_k$-modules $\SB(w)^{\delta x}\subset \bigoplus_{E\in\CE_{\delta x}}\SB(w)^E$. Now we define $d_x\colon \SB(w)^x\to \SB(w)^{\delta x}$ as a projective cover  in the category of graded $S_k$-modules. The components of $d_x$ (with respect to the inclusion $\SB(w)^{\delta x}\subset \bigoplus_{E\in\CE_{\delta x}}\SB(w)^E$) give us the maps $\rho_{x,E}$.
\end{enumerate}

Let us check that $\SB(w)$ is indeed a Braden--MacPherson sheaf. Since $\SB(w)^x$ is projective for all $x\in\CV$ it is a graded free $S_k$-module and the finiteness assumptions hold as well, so $\SB(w)$ fulfills property (1). Property (2)  is assured by step (2) in the inductive construction of $\SB(w)$. Property (3) is, by Lemma \ref{lemma-extension}, equivalent to the fact that for all $x\in\CV$ the map $d_x\colon \SB(w)^x\to \bigoplus_{E\in\CE_{\delta x}} \SB(w)^E$ contains $\SB(w)^{\delta_x}$ in its image. This is clear by step (3).
In addition, step (3) also yields that the image of $d_x$ is contained in $\SB(w)^{\delta x}$. By Lemma \ref{lemma-suronstalks} this is equivalent to the surjectivity of $\Gamma(\{\ge x\},\SB(w))\to\SB(w)^x$. We have already seen that the restriction map $\Gamma(\SB(w))\to\Gamma(\{\ge x\},\SB(w))$ is surjective. Hence also $\Gamma(\SB(w))\to\SB(w)^x$ is surjective, hence $\SB(w)$ also satisfies property (4) of a Braden--MacPherson sheaf.

Now we prove statement (2) of the above theorem using the above explicitly defined objects $\SB(w)$. Note that this also gives the uniqueness part of statement (1), which we have not yet proven. So let $\SB$ be an arbitrary Braden--MacPherson sheaf. We prove by induction on the set of open subsets $\CJ$ of $\CV$ that there are $(w_1,l_1)$,\dots,$(w_n,l_n)$ such that
$$
\SB^\CJ\cong \SB(w_1)^\CJ[l_1]\oplus \dots\oplus\SB(w_n)^\CJ[l_n].
$$
(Here and in the following we denote by $\SF^\CI$ the obvious restriction of a sheaf $\SF$ to the subgraph corresponding to the vertices in a subset $\CI$ of $\CV$.) 

So suppose that $\CJ$ is open, that $x\in\CJ$ is minimal and we have a decomposition as above for $\CJ^\prime=\CJ\setminus \{x\}$. We get, in particular,
$$
\Gamma(\{>x\},\SB)\cong\Gamma(\{>x\},\SB(w_1)[l_1])\oplus \dots\oplus\Gamma(\{>x\},\SB(w_n)[l_n])
$$
 and
$$
\SB^{\delta x}\cong \SB(w_1)^{\delta x}[l_1]\oplus \dots\oplus\SB(w_n)^{\delta x}[l_n].
$$
Now $d_x\colon\SB^x\to \SB^{\delta x}$ is surjective, by property (3) of a Braden--MacPherson sheaf and Lemma \ref{lemma-extension}, and $\bigoplus \SB(w_i)^x[l_i]\to \bigoplus\SB(w_i)^{\delta x}[l_i]$ is a projective cover by construction. Hence we have a decomposition $\SB^x=\bigoplus \SB(w_i)^x[l_i]\oplus R$ for some graded free $S_k$-module $R$ which lies in the kernel of $d_x$. Each isomorphism $R\cong S_k[m_1]\oplus\dots\oplus S_k[m_r]$ then yields an isomorphism
$$
\SB^\CJ\cong \SB(w_1)^\CJ[l_1]\oplus \dots\oplus\SB(w_n)^\CJ[l_n]\oplus \SB(x)^{\CJ}[m_1]\oplus\dots\oplus \SB(x)^{\CJ}[m_r],
$$
which is our claim for $\CJ$.
The above construction also yields the uniqueness of the multiset $(w_1,l_1)$,\dots,$(w_n,l_n)$.
\end{proof}

\subsection{Directed moment graphs from stratified varieties}
Suppose that $X$ is a complex $T$-variety satisfying (A1). In Section \ref{subsec-mgfromvar} we constructed an (undirected) moment graph $\CG_X$ from this datum. Suppose now that, in addition, we are given a stratification $X=\bigsqcup_{\lambda\in\Lambda} X_\lambda$ satisfying (S1) and (S2). Recall that for each $\lambda\in\Lambda$ we denote by $x_\lambda$ the  unique fixed point in $X_\lambda$. Hence we now have identifications between the set of fixed points in $X$, the set $\Lambda$ of strata and the set of vertices of $\CG_X$. 

From this we obtain a direction of each edge as follows. Suppose that
the one-dimensional orbit $E$ contains $x_\lambda$ and $x_\mu$ in its
closure. Then either $X_\lambda\subset \ol{X_\mu}$ or
$X_\mu\subset\ol{X_\lambda}$. We direct the corresponding edge of
$\CG_X$ towards $\mu$ in the first case, and towards $\lambda$ in
the second case. We denote by $\le$ the partial order on the vertices of $\CG_X$
generated by the relation $\lambda \le \mu$ if there is an edge $E :
\lambda \to \mu$. The following proposition shows that this is the
same order as the one induced by the closure relations on the strata:

\begin{proposition} We have $\lambda \le \mu$ if and only if $X_{\lambda}
  \subset \overline{X_{\mu}}$. \end{proposition}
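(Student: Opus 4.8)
I would treat the two implications separately. The implication ``$\lambda\le\mu$ in $\CG_X$ $\Rightarrow$ $X_\lambda\subset\overline{X_\mu}$'' is immediate: writing $\lambda\preceq\mu$ for $X_\lambda\subset\overline{X_\mu}$, the relation $\preceq$ is a partial order on $\Lambda$, so it suffices to verify $\lambda\preceq\mu$ for a single directed edge $\lambda\to\mu$; but by the very definition of the direction of an edge, an edge $\lambda\to\mu$ exists only when there is a one-dimensional orbit $E$ with $\overline E=E\cup\{x_\lambda,x_\mu\}$ and $X_\lambda\subset\overline{X_\mu}$, so there is nothing to prove. Thus $\le$ refines into $\preceq$, and the work is in the converse.

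For the converse I would argue by induction on $d_\mu-d_\lambda$, where $d_\nu:=\dim X_\nu$. Since $\overline{X_\lambda}\subset\overline{X_\mu}$ are irreducible we have $d_\lambda\le d_\mu$, with equality forcing $\overline{X_\lambda}=\overline{X_\mu}$ and hence $\lambda=\mu$; this settles the base case. So assume $\lambda\preceq\mu$ with $\lambda\ne\mu$. The heart of the argument is the geometric claim: \emph{there is a one-dimensional $T$-orbit $E$ with $\overline E\cong\DP^1$ whose two $T$-fixed points are $x_\lambda$ and some $x_\nu$ with $x_\nu\in\overline{X_\mu}$.} Granting this, $E$ is an edge of $\CG_X$ joining $\lambda$ and $\nu$; since $x_\lambda\in\overline E\subset\overline{X_\nu}$ we have $\lambda\preceq\nu$ and $\lambda\ne\nu$, so this edge is directed $\lambda\to\nu$. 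Moreover $x_\nu\in\overline{X_\mu}$ gives $\nu\preceq\mu$, and $d_\nu>d_\lambda$ gives $d_\mu-d_\nu<d_\mu-d_\lambda$; by the inductive hypothesis there is a directed path $\nu\to\cdots\to\mu$ in $\CG_X$, and prepending the edge $\lambda\to\nu$ gives $\lambda\le\mu$.

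It remains to prove the geometric claim, which is the main obstacle. I would work in the minimal $T$-stable open neighbourhood $U$ of $x_\lambda$; by (A2) and the discussion preceding Proposition~\ref{prop:attr}, $U$ is affine, connected and $T$-stable, with $x_\lambda$ as its unique (attractive) fixed point, contracted to $x_\lambda$ by a one-parameter subgroup $\gamma$. Put $Z:=\overline{X_\mu}\cap U$: this is a closed, $T$-stable, irreducible subvariety of $U$ of dimension $d_\mu\ge d_\lambda+1\ge1$ whose only $T$-fixed point is $x_\lambda$. Since $\dim Z\ge1$, $Z$ contains a one-dimensional $T$-orbit $E$ (for instance inside $\overline{Tz}\cap U$ for a non-fixed $z\in Z$, which is an affine toric variety with attractive fixed point $x_\lambda$ and hence has one-dimensional orbits); by attractiveness $x_\lambda\in\overline E$, and by (A1) $\overline E$ is a smooth toric curve, so $\overline E\cong\DC$ or $\overline E\cong\DP^1$. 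In the second case its other fixed point is some $x_\nu$ with $\nu\ne\lambda$ (as $U^T=\{x_\lambda\}$) and $x_\nu\in\overline{X_\mu}$ (as $E\subset Z$ and $\overline{X_\mu}$ is closed), and we are done. Hence the real content of the claim is to exclude the degenerate situation in which every one-dimensional orbit of $Z$ has closure $\cong\DC$ contained in $U$ — that is, to show $\overline{X_\mu}$ genuinely protrudes from $U$ — which is plausible since $x_\mu\in\overline{X_\mu}\setminus U$ and $\dim Z=d_\mu>d_\lambda$. The standard tool here is the Bialynicki--Birula decomposition together with Brion's finite $T$-equivariant morphism $\pi\colon U\to V$ onto a $T$-module whose weights are the characters $\alpha_E$ of the one-dimensional orbits of $U$ (as recalled in the proof of Lemma~\ref{lem:annglobal}): one identifies the fixed points joined to $x_\lambda$ by a $\DP^1$ by analysing which coordinate lines of $V$ lie in $\pi(Z)$, using that $\pi$ is finite. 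Making this last step precise — in particular dealing with the non-completeness of $X$, e.g.\ by passing to a $T$-equivariant completion — is the technical crux of the proof.
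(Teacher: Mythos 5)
Your overall architecture is the same as the paper's: the forward implication is formal, and the converse reduces to the claim that whenever $X_\lambda\subset\overline{X_\mu}$ with $\lambda\ne\mu$, there is an edge $E\colon\lambda\to\nu$ of $\CG_X$ with $X_\nu\subset\overline{X_\mu}$, after which one finishes by iterating (your induction on $d_\mu-d_\lambda$ is equivalent to the paper's chain argument). You are also right that the geometric claim is the only real content. However, you flag but do not close the gap, and the way you set things up makes it harder to close than it needs to be.

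There are two separate issues in your sketch, and the paper resolves both at once by working with a \emph{normal slice}, which you do not use. First, you look for a one-dimensional orbit inside $Z=\overline{X_\mu}\cap U$, but nothing in your argument prevents the orbit you find from lying in $X_\lambda$ itself; in that case its closure really can be $\cong\DC$ with $x_\lambda$ as sole fixed point, and no edge arises. Second, even after finding an orbit $E$ outside $X_\lambda$, the ``does $\overline E$ protrude from $U$'' worry is not the right question: the relevant observation is that $E$ is contained in a single stratum $X_\nu$, and $x_\nu$ is an attractive fixed point of the affine $T$-variety $X_\nu\cong\DC^{d_\nu}$, so $x_\nu\in\overline E$ automatically. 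Together with $x_\lambda\in\overline E$ and $\nu\ne\lambda$ this forces $\overline E\cong\DP^1$; the $\DC$-degeneracy you fear is excluded by stratum structure, not by analysing Brion's finite map. The paper's proof: take an affine $T$-stable neighbourhood $U$ of $x_\lambda$ inside $\overline{X_\mu}$ and a $T$-invariant normal slice $N_\lambda\subset U$ to $X_\lambda$ at $x_\lambda$. Attractiveness gives a contracting cocharacter $\gamma$ on $N_\lambda$, so $(N_\lambda\setminus\{x_\lambda\})/\DC^\times$ is projective; Borel's fixed-point theorem produces a $T$-fixed point there, i.e.\ a one-dimensional $T$-orbit $E\subset N_\lambda\setminus\{x_\lambda\}$. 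Because the slice is transverse to $X_\lambda$, $E$ lies in some $X_\nu$ with $\nu\ne\lambda$, and then $X_\nu\subset\overline{X_\mu}$ because $E\subset\overline{X_\mu}$ and $\overline{X_\mu}$ is a union of strata. Your route via Brion's finite $T$-equivariant morphism and an equivariant completion could perhaps be made to work, but it is considerably heavier than this Bialynicki--Birula/Borel argument and you have not carried it out.
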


\begin{proof}
Clearly, if $\lambda \le \mu$ then $X_{\lambda}
  \subset \overline{X_{\mu}}$. For the converse we show:
\begin{equation} \label{eq:claim}
\begin{array}{c} \text{
If $X_{\lambda} \subset \overline{X_{\mu}}$ then there exists an
edge}\\
\text{$E : \lambda \to \nu$ such that $X_{\nu} \subset
\overline{X_{\mu}}$.}
\end{array}
\end{equation}
Let us assume that \eqref{eq:claim} holds. Then, if $X_{\lambda}
\subset \overline{X_{\mu}}$, we can find a chain
$\lambda \to \nu_1 \to \dots \to \nu_m \to \mu$ and so $\lambda \le \mu$.

It remains to show \eqref{eq:claim}. Let $U$ be an affine
$T$-stable neighbourhood of $x_{\lambda}$ in $\overline{X_{\mu}}$ 
and let $N_{\lambda} \subset
U$ be a $T$-invariant normal slice to the stratum $X_{\lambda}$ at
$x_{\lambda}$. Because $x_{\lambda} \in
N_{\lambda}$ is attractive, we can find a cocharacter $\gamma:
\mathbb{C}^\times \to T$ such that $\lim_{z \to 0} \gamma(z) \cdot x =
x_{\lambda}$ for all $x \in N_{\lambda}$. It follows that $Y := (N_{\lambda} \setminus \{ x_\lambda \}) /
\mathbb{C}^\times$  is a projective variety. By 
Borel's fixed point theorem, $T$ has a fixed point on $Y$ and
hence a one-dimensional orbit on $N_{\lambda}$. This one-dimensional
orbit is contained in some $X_{\nu}$, hence connects $x_\lambda$ with $x_\nu$. By construction we have
$X_{\nu} \subset \overline{X_{\mu}}$.
\end{proof}

\subsection{The $k$-smooth locus of a moment graph} \label{subsec:ksm}

In this subsection we assume that $k$ is a field and that the (directed) moment graph $\CG$ contains a largest vertex $w$. This moment graph carries the indecomposable Braden--MacPherson sheaf $\SB:=\SB(w)$ over $k$. 

\begin{definition} The {\em $k$-smooth locus} $\Omega_k(\CG)$ of $\CG$ is the set of all vertices $y$ of $\CG$ such that $\SB^y$ is a free $S_k$-module of (ungraded) rank $1$.
\end{definition}

In \cite{Fie06} the $k$-smooth locus is  determined for a large class of pairs $(\CG,k)$. In order to formulate the result, let $\CB:=\Gamma(\SB)$ be the space of global sections of $\SB$. We consider this as a graded $\CZ_k$-module. 

\begin{definition} We say that $\CB$ is self-dual of degree $l\in\DZ$ if there is an isomorphism
$$
\Hom^\bullet_{S_k}(\CB,S_k)\cong\CB[l]
$$
of graded $\CZ_k$-modules.
\end{definition}

The following is an analogue of the assumption (A4a) for moment graphs.
\begin{definition} We say that the pair $(\CG,k)$ is a {\em GKM-pair} if $\alpha_E$ is non-zero in $Y\otimes_\DZ k$ for any edge $E$ and if  for any distinct edges $E$ and $E^\prime$ containing a common vertex we have $k\alpha_E\cap k\alpha_{E^\prime}=0$. 
\end{definition}
Note that this can be considered, for given $\CG$, as a requirement on the characteristic of $k$. The main result of \cite{Fie06} is the following:

\begin{theorem}[{\cite[Theorem 5.1]{Fie06}}] \label{thm:mgsmoothlocus}
Suppose that $(\CG,k)$ is a GKM-pair and that $\CB$ is self-dual of degree $2l$. Then we have
$$
\Omega_k(\CG)=\left\{x\in\CV\left| \begin{matrix} \text{ for all $y\ge x$ the number}\\ \text{ of edges containing $y$ is $l$}
\end{matrix}\right\}\right..
$$
\end{theorem}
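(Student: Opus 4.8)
The plan is to argue by downward induction on the (finite) vertex set $\CV=\{\le w\}$, reducing the statement to two facts about the indecomposable Braden--MacPherson sheaf $\SB:=\SB(w)$ and its global sections $\CB=\Gamma(\SB)$: first, that the $k$-smooth locus $\Omega_k(\CG)$ is \emph{open} in $\CV$, so that $x\in\Omega_k(\CG)$ if and only if $\{\ge x\}\subseteq\Omega_k(\CG)$; and second, that membership in $\Omega_k(\CG)$ can be read off from the \emph{costalk} $\SB_x$ rather than the stalk $\SB^x$, via the self-duality hypothesis. Openness follows from the inductive construction of $\SB$: for a directed edge $E:x\to y$ the module $\SB^E=\SB^y/\alpha(E)\SB^y$ is a quotient of $\SB^y$, the space $\SB^{\delta x}$ surjects onto $\SB^E$ (using property~(2) together with flabbiness, cf.\ Lemmas~\ref{lemma-extension} and \ref{lemma-suronstalks}), and $\SB^x$ is a projective cover of $\SB^{\delta x}$; since $\alpha(E)$ has positive degree and is non-zero in $S_k$ (the GKM-pair hypothesis), $\SB^y$ and $\SB^y/\alpha(E)\SB^y$ have the same minimal number of generators, so $\rk_{S_k}\SB^x\ge\rk_{S_k}\SB^y$. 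Iterating along directed paths gives $x\le y\Rightarrow\rk_{S_k}\SB^x\ge\rk_{S_k}\SB^y\ge 1$, whence $\Omega_k(\CG)$ is open.

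The heart of the matter is an explicit description of the costalk at a vertex which is smooth together with its upper neighbours. Suppose $\SB^x$ is free of rank $1$ and $\SB^y$ is free of rank $1$ for every $y\in\CV_{\delta x}$; one checks along the way that rank-one stalks of $\SB(w)$ occur in degree $0$, so $\SB^x\cong S_k$ and $\SB^y\cong S_k$. For a down-edge $E:z\to x$, property~(2) gives $\ker\rho_{x,E}=\alpha(E)\SB^x$ directly. For an up-edge $E:x\to y$ the map $\rho_{x,E}$ is a component of the projective cover $d_x$ and is surjective onto $\SB^E$; since $\SB^E\cong S_k/\alpha(E)S_k$ and $\SB^x\cong S_k$, this again forces $\ker\rho_{x,E}=\alpha(E)\SB^x$. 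Hence
$$
\SB_x\;=\;\bigcap_{E\ni x}\ker\rho_{x,E}\;=\;\Big(\prod_{E\ni x}\alpha(E)\Big)\SB^x,
$$
the last step because the GKM-pair hypothesis makes the labels of distinct edges at $x$ pairwise coprime in $S_k$; in particular $\SB_x$ is free of rank $1$, with generator in degree $2\cdot\#\{E:E\ni x\}$.

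Now the self-duality enters. Because $(\CG,k)$ is a GKM-pair, the structure algebra $\CZ_k$ becomes the full product $\prod_y\operatorname{Frac}(S_k)$ after inverting the edge labels, so the self-pairing underlying $\Hom^\bullet_{S_k}(\CB,S_k)\cong\CB[2l]$ is ``diagonal'' with respect to the vertices; restricting it to the vertex $x$ identifies $\SB_x$ with $\Hom^\bullet_{S_k}(\SB^x,S_k)[-2l]$. In particular, if $\SB^x\cong S_k$ then $\SB_x\cong S_k[-2l]$, whose generator sits in degree $2l$. Comparing with the previous paragraph (applicable at any $x\in\Omega_k(\CG)$, whose upper neighbours also lie in $\Omega_k(\CG)$ by openness) yields $\#\{E:E\ni x\}=l$, and, running over all $y\ge x$, the inclusion
$$
\Omega_k(\CG)\subseteq\Big\{x\in\CV\,:\,\#\{E:E\ni y\}=l\text{ for all }y\ge x\Big\}.
$$
The base vertex $x=w$ is the degenerate case: $\SB^w=S_k$ by construction, every edge at $w$ is a down-edge, so $\SB_w=\bigl(\prod_{E\ni w}\alpha(E)\bigr)S_k$ by the same computation, and self-duality forces $\#\{E:E\ni w\}=l$, which is precisely the assertion of the theorem at $w$.

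For the reverse inclusion one must show that, if $\#\{E:E\ni y\}=l$ for every $y\ge x$, then $\SB^x\cong S_k$. I would again induct downwards: the case $x=w$ is trivial, and in the inductive step $\SB^y\cong S_k$ holds for all $y>x$ with $y\ge x$ (their upper sets lie inside $\{\ge x\}$, so the inductive hypothesis applies). As $\SB^x$ is a projective cover of $\SB^{\delta x}=u_x(\Gamma(\{>x\},\SB))\subseteq\bigoplus_{E\in\CE_{\delta x}}\SB^E$, the claim is equivalent to $\SB^{\delta x}$ being a \emph{cyclic} $S_k$-module --- equivalently, via the self-duality, to $\SB_x$ being free of rank $1$. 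This is the step I expect to be the main obstacle. It requires turning the combinatorial hypothesis --- which concerns edge counts in the full graph $\CG$, so that the down-edges at each $y\ge x$ (information carried by $\SB^y$ through property~(2)) genuinely enter, and not merely those in the subgraph on $\{\ge x\}$ --- into the existence of a single section over $\{>x\}$ whose image generates $\SB^{\delta x}$, or dually into the statement that $\bigcap_{E\ni x}\ker\rho_{x,E}$ is no larger than a rank-one module. The GKM-pair hypothesis is used throughout to keep the linear algebra over $S_k$ parallel to the characteristic-zero situation, and a graded rank count fed by the self-duality should rule out rank $\ge 2$; making this rigorous amounts to establishing a Carrell--Peterson-type criterion for Braden--MacPherson sheaves over an arbitrary field $k$, and is where the real work lies.
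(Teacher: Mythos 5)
This theorem is not proved in the paper: it is imported as \cite[Theorem 5.1]{Fie06}, and no argument for it appears here. So there is nothing internal to compare your attempt against; the only question is whether your proposal stands on its own, and --- as you yourself acknowledge at the end --- it does not.

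What you do work out is sound. Openness of $\Omega_k(\CG)$ follows as you say from rank monotonicity along directed edges: $\SB^{\delta x}\twoheadrightarrow\SB^E$ (via BM properties (2), (3), (4)), graded Nakayama gives that $\SB^E=\SB^y/\alpha(E)\SB^y$ has the same minimal number of generators as $\SB^y$, and $\SB^x$ is a projective cover of $\SB^{\delta x}$, so $\rk_{S_k}\SB^x\ge\rk_{S_k}\SB^y\ge\rk_{S_k}\SB^w=1$ whenever $x\le y\le w$. The costalk identity $\SB_x=\bigl(\prod_{E\ni x}\alpha(E)\bigr)\SB^x$ at a vertex whose upper neighbours are also smooth is likewise correct, given pairwise coprimality of the labels from the GKM hypothesis. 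However, the pivotal step of the forward inclusion --- that the self-duality of $\CB$ as a $\CZ_k$-module localizes at $x$ to a graded isomorphism $\SB_x\cong\Hom^\bullet_{S_k}(\SB^x,S_k)[-2l]$ --- is asserted, not proved. It is not a formality: one has to show that the $\CZ_k$-linear duality pairing on $\CB$ restricts to a perfect pairing between the costalk $\SB_x\hookrightarrow\CB$ and the stalk $\CB\twoheadrightarrow\SB^x$, and this is a genuine lemma (it is one of the central points of \cite{Fie06}, not a consequence of the GKM condition alone). More seriously, the reverse inclusion --- that the edge-count condition forces $\rk\SB^x=1$ --- is left entirely open; your description of it as a Carrell--Peterson-type criterion for Braden--MacPherson sheaves over $k$ is apt, and it is precisely where the substance of \cite[Theorem 5.1]{Fie06} lies. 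So you have a convincing sketch of the easier half, with the key duality lemma unproved and the hard converse missing.
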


We are going to apply this statement later in order to study the $k$-smooth locus of $T$-varieties.

\subsection{The combinatorics of parity sheaves}
Let $X$ be a complex $T$-variety, $k$ a complete local principal ideal domain. Assume that these data satisfy the assumptions (A1)--(A4a/b), (S1), (S2) and (R1)--(R3). We now come to the principal result of this paper. 

\begin{theorem} \label{thm:Wparity=BM}
Suppose that $\CP\in D_T^b(X,k)$ is a parity sheaf. Then $\DW(\CP)$ is
a Braden--MacPherson sheaf. More precisely, $\DW(\CP(\lambda)) \cong \SB(\lambda)$.
\end{theorem}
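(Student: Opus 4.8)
The plan is to verify that $\DW(\CP)$ satisfies the four conditions in Definition~\ref{def-BMPsheaves} of a Braden--MacPherson sheaf, and then to pin down its indecomposable summands. Conditions (1) and (2) are essentially local and the needed inputs are already available. For (1): for a vertex $x=x_\lambda$ one has $\DW(\CP)^x=\THypgr(\CP_{x_\lambda})$, and since $\CP$ is parity, $\CP_{x_\lambda}=i_{x_\lambda}^\ast\CP$ is a finite direct sum of shifted constant sheaves on the point, so $\THypgr(\CP_{x_\lambda})$ is graded free of finite rank over $S_k$, nonzero only for the finitely many $x$ lying in $\supp\CP$. For (2), I would first observe that every edge $E$ of $\CG_X$, directed $E\colon\mu\to\lambda$ towards its larger endpoint, comes from a one-dimensional orbit with $E\subset X_\lambda$: the orbit $E$ lies in a single stratum $X_\nu$, and because $\overline{E}\cong\mathbb{C}_{\alpha_E}$-closure is the image of $\DP^1$ under a finite morphism it contains exactly the two $T$-fixed points $x_\nu$ and $x_\mu$ with $\mu<\nu$, forcing $\nu=\lambda$. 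Condition (2) then asks that $\rho_{\lambda,E}\colon\THypgr(\CP_{x_\lambda})\to\THypgr(\CP_E)$ be surjective with kernel $\alpha_E\THypgr(\CP_{x_\lambda})$, which is exactly Proposition~\ref{prop-propofparity}(3) (this is the point where assumption (A4b) is used).

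For conditions (3) and (4) the idea is to transport everything to equivariant cohomology. Writing $\CP\cong\bigoplus_i\CP(\lambda_i)[n_i]$ by Theorem~\ref{thm-parityunique} and using that $\DW$, $\Gamma$ and $\THypgr$ commute with finite direct sums and shifts, Proposition~\ref{prop:globparity} identifies $\Gamma(\DW(\CP))$ with $\THypgr(\CP)$ via the localisation homomorphism, compatibly with the projections onto the stalks at fixed points. Under this identification the map in (4), namely $\Gamma(\DW(\CP))\to\DW(\CP)^{x_\mu}$, becomes the restriction homomorphism $\THypgr(\CP)\to\THypgr(\CP_{x_\mu})$, which is surjective by Theorem~\ref{thm:parityexists}(2) (applied summandwise; the target vanishes when $x_\mu\notin\supp\CP$). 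For (3), let $\CJ\subseteq\CV$ be open; identifying $\CV$ with $\Lambda$, the subvariety $X_\CJ$ is open and $T$-stable, and — again because each one-dimensional orbit closure meets $X^T$ in at most two points — the moment graph $\CG_{X_\CJ}$ is precisely the full subgraph of $\CG_X$ on $\CJ$, so that $\Gamma(\CJ,\DW(\CP))=\Gamma(\DW(\CP_\CJ))$. Since $\CP_\CJ$ is again parity on $X_\CJ$, Theorem~\ref{theorem-loc2} (or Proposition~\ref{prop:globparity}) gives $\Gamma(\DW(\CP_\CJ))=\THypgr(\CP_\CJ)$, and the restriction-of-sections map becomes the topological restriction $\THypgr(\CP)\to\THypgr(\CP_\CJ)$, surjective by Proposition~\ref{prop-propofparity}(2). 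This establishes that $\DW(\CP)$ is a Braden--MacPherson sheaf.

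To obtain the sharper statement $\DW(\CP(\lambda))\cong\SB(\lambda)$ — the general case then following by decomposing $\CP$ — note that $\CP(\lambda)$ is indecomposable and $k$ is complete local, so $\End(\CP(\lambda))$ is local; by the full faithfulness of $\DW$ on parity sheaves (Theorem~\ref{thm:fullyfaithful}), $\End(\DW(\CP(\lambda)))$ is local too, hence $\DW(\CP(\lambda))$ is an indecomposable Braden--MacPherson sheaf and is therefore isomorphic to $\SB(w)[l]$ for some vertex $w$ and some $l\in\DZ$ by the classification theorem for Braden--MacPherson sheaves. Now $\DW(\CP(\lambda))^{x_\mu}=\THypgr(\CP(\lambda)_{x_\mu})$ vanishes unless $x_\mu\in\supp\CP(\lambda)=\overline{X_\lambda}$, i.e.\ unless $\mu\le\lambda$, while $\DW(\CP(\lambda))^{x_\lambda}=\THypgr(\ul{k}_{x_\lambda})=S_k$ concentrated in degree $0$ because of our normalisation $\CP(\lambda)_{X_\lambda}\cong\ul{k}_{X_\lambda}$. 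Comparing with $\SB(w)[l]^x$, which vanishes unless $x\le w$ and equals $S_k[l]$ at $x=w$, forces $w=\lambda$ and $l=0$.

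The main obstacle is not any single hard step but the accumulation of compatibilities: the load-bearing facts — that $\THypgr$ of a parity sheaf is $S_k$-free and equals the global sections of the associated moment graph sheaf (Theorems~\ref{theorem-loc1}, \ref{theorem-loc2} and Proposition~\ref{prop:globparity}), the surjectivity of the various restriction homomorphisms (Proposition~\ref{prop-propofparity}, Theorem~\ref{thm:parityexists}), and full faithfulness of $\DW$ (Theorem~\ref{thm:fullyfaithful}) — are already in place, so what remains is to match each clause of Definition~\ref{def-BMPsheaves} with a statement about $\THypgr$. The delicate points are checking that topological restriction to an open $T$-stable subvariety corresponds to restriction of moment graph sections (hence the full-subgraph observation, which rests on the fact that a one-dimensional orbit closure is the finite image of $\DP^1$) and that the edge directions in $\CG_X$ agree with the closure order on strata, so that Proposition~\ref{prop-propofparity}(3) applies in the form needed for condition (2).
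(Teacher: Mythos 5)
Your proof follows the same strategy as the paper: translate the four conditions of Definition~\ref{def-BMPsheaves} into statements about equivariant hypercohomology and verify them from Proposition~\ref{prop-propofparity}, Theorem~\ref{thm:parityexists} and Proposition~\ref{prop:globparity}, then use full faithfulness of $\DW$ (Theorem~\ref{thm:fullyfaithful}) to match indecomposables. The paper compresses this into a few lines, whereas you fill in several steps the paper leaves implicit: the observation that an edge $E\colon\mu\to\lambda$ corresponds to a one-dimensional orbit $E\subset X_\lambda$ (so Proposition~\ref{prop-propofparity}(3) applies as needed), the compatibility of moment-graph restriction over an open $\CJ$ with the topological restriction $\THypgr(\CP)\to\THypgr(\CP_\CJ)$ via the full-subgraph identification $\CG_{X_\CJ}$, and the explicit stalk comparison pinning down $\DW(\CP(\lambda))\cong\SB(\lambda)$ rather than $\SB(w)[l]$. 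These are exactly the compatibilities the paper's terse phrase ``translate this definition into our situation'' is gesturing at, so your write-up is a correct and more careful version of the same argument.
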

\begin{proof} We have to show that $\DW(\CP)$ satisfies the four properties listed in Definition \ref{def-BMPsheaves}. If we translate this definition into our situation we see that we have to check the following:
\begin{enumerate}
\item
 For each $x\in X^T$ the cohomology $\THypgr(\CP_x)$ is a graded free module over $S_k$.
\item For each one-dimensional orbit $E$ that is contained in the stratum associated to the fixed point $x$, the map
$$
\THypgr(\CP_x)\to\THypgr(\CP_E)
$$
is surjective with kernel $\alpha(E)\THypgr(\CP_x)$.
\item For each open union $X_\CJ\subset X$ of strata the restriction homomorphism
$$
\THypgr(\CP)\to \THypgr(\CP_{\CJ})
$$
is surjective.
\item For each $x\in X^T$ the homomorphism $\THypgr(\CP)\to \THypgr(\CP_x)$ is surjective.
\end{enumerate}
Part (1) follows directly from the definition of a parity sheaf, the parts
(2) and (3) are stated in Proposition \ref{prop-propofparity}. Part (4) follows from Theorem
\ref{thm:parityexists} and the fact $\CP$ is the direct sum of shifted
copies of $\CP(\lambda)$'s. The last statement follows, as
$\CP(\lambda)$ is indecomposable if and only if $\DW(\SB(\lambda))$ is,
by Theorem \ref{thm:fullyfaithful}.
\end{proof}

\section{The case of Schubert varieties}

We now discuss a special and important case of the general theory
developed in the previous section, namely the case of Schubert
varieties in (Kac-Moody) flag varieties. For a detailed construction
of these varieties in the Kac-Moody setting the reader is referred to
\cite{Ku}.

We fix some notation. Let $A$ be a generalised Cartan matrix of size
$l$ and let $\fg=\fg(A) = \fn_- \oplus \fh \oplus \fn_+$ denote the corresponding Kac-Moody Lie algebra
with Weyl group $W$, Bruhat order $\le$, length function $\ell$ and
simple reflections $S = \{ s_i\}_{i=1, \dots, l}$. To $A$ one may also
associate a Kac-Moody group $G$ with Borel subgroup $B$ and connected algebraic torus $T\subset B$.  Given any subset $I \subset \{1,
\dots, l \}$ one has a standard parabolic subgroup $P_I$ containing
$B$ and standard parabolic subgroup $W_I \subset W$.  The set $G / P_I$ may be given the structure of an
ind-$T$-variety and is called a Kac-Moody flag variety.

For each $w \in W$ one may consider the \emph{Schubert cell} $X_w^I:=B
w P_I / P_I  \subset G / P_I$ and its closure, the
\emph{Schubert variety},
\[
\ol{X_w^I}  = \bigsqcup_{v \le w} B v P_I.
\]
Each Schubert cell   is isomorphic to a (finite dimensional) affine space and each Schubert
variety is a (finite dimensional) projective algebraic variety. The partition of $G
/ P_I$ into Schubert cells gives a stratification of $G
/ P_I$.

The following proposition shows that the results of this article may
be applied to any closed union of finitely many $B$-orbits in $G / P_I$:

\begin{proposition} Let $X \subset G
/ P_I$ be a closed subset which is the union of finitely many
Schubert cells. Then $X$ together with its stratification into
Schubert cells satisfies our assumptions (A1), (A2), (S1), (S2), (R1),
(R2) and (R3).
\end{proposition}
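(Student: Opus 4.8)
The plan is to verify the seven assumptions one at a time, reducing each to standard structure theory of Kac--Moody flag varieties, of Schubert cells, and of Bott--Samelson resolutions. Write $\Lambda:=\{v\in W^I\mid X_v^I\subseteq X\}$; since $X$ is closed and a union of $B$-orbits, $\Lambda$ is a down-set for the Bruhat order and $X=\bigsqcup_{v\in\Lambda}X_v^I$, the unique $T$-fixed point of $X_v^I$ being $vP_I$. For (S1), recall that each Schubert cell $X_v^I=BvP_I/P_I$ is the $B$-orbit of $vP_I$ and is $T$-equivariantly isomorphic to a finite-dimensional affine space on which $T$ acts linearly, the weights being the positive real roots inverted by $v$ and not lying in $W_I$ (see \cite{Ku}); since these weights are nonzero, $vP_I$ is indeed the unique $T$-fixed point of the cell. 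For (A1), the $T$-fixed points of $X$ are then the finitely many $vP_I$, $v\in\Lambda$, while the one-dimensional $T$-orbits of $G/P_I$ are the familiar curves $U_\gamma\cdot wP_I$ attached to a positive real root $\gamma$ and $w\in W$, each with closure isomorphic to $\mathbb{P}^1$, in particular smooth; only finitely many of them lie in $X$, as the two $T$-fixed points on each such curve lie in the finite set $X^T$.

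For (A2) the plan is to exhibit the required cover explicitly by translated opposite big cells. Put $C_I^-:=B^-P_I/P_I$. From the standard factorisation $U=(U\cap wU^-w^{-1})(U\cap wUw^{-1})$ one gets $w^{-1}Bw\subseteq B^-U\subseteq B^-B$ for every $w\in W$, hence $X_v^I=v(v^{-1}Bv)P_I/P_I\subseteq vC_I^-$, so $\{vC_I^-\cap X\mid v\in\Lambda\}$ is an open cover of $X$. Each $vC_I^-$ is $T$-stable (since $vB^-v^{-1}\supseteq T$) and is an affine space, so $vC_I^-\cap X$, being closed in it, is an affine $T$-stable subvariety of $X$ containing $vP_I$; moreover a conjugate by $v$ of a strictly antidominant cocharacter contracts $vC_I^-$, hence also the closed, stable, connected subset $vC_I^-\cap X$, onto $vP_I$, which is therefore an attractive --- and, by connectedness, the unique --- $T$-fixed point of $vC_I^-\cap X$. (Equivalently, the weights of $T$ on $T_{vP_I}(G/P_I)$ are $v(\Phi^-\setminus\Phi_I^-)$, which lie in the open half space on which $v\cdot\xi$ is negative for any cocharacter $\xi$ positive on the simple roots, so every $T$-fixed point of $X$ is attractive, and since $X$ is projective, (A2) also follows from the remark after its statement whenever $X$ is normal, e.g.\ when $X$ is a single Schubert variety.)

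For (S2) the plan is to invoke the parenthetical sufficient condition: the stratification of $X$ by Schubert cells is a Whitney stratification (see \cite{Ku}), so $D_{T,\Lambda}^b(X,k)$ is preserved by Grothendieck--Verdier duality. For (R1)--(R3), take for each $v\in\Lambda$ a reduced expression $v=s_{i_1}\cdots s_{i_m}$ and let $\widetilde{X_v^I}:=P_{i_1}\times^B\cdots\times^B P_{i_m}/B$ be the associated Bott--Samelson variety, with the proper morphism $\pi_v\colon\widetilde{X_v^I}\to\ol{X_v^I}\subseteq X$, $[p_1,\dots,p_m]\mapsto p_1\cdots p_mP_I$. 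Then $\widetilde{X_v^I}$ is smooth, being an iterated $\mathbb{P}^1$-bundle; the map $[p_1,\dots,p_m]\mapsto(p_1B,p_1p_2B,\dots,p_1\cdots p_mB)$ is a $T$-equivariant closed embedding into a product of finite-dimensional Schubert varieties, each of which embeds $T$-equivariantly into some $\mathbb{P}(V(\lambda_j))$ with $V(\lambda_j)$ an integrable highest weight module, so composing with a Segre embedding yields (R1); and the $T$-fixed points of $\widetilde{X_v^I}$ are indexed by subwords of $s_{i_1}\cdots s_{i_m}$, hence finitely many, which is (R2). For (R3), note that $\pi_v$ is $B$-equivariant and that $\widetilde{X_v^I}$ carries its own cell decomposition, indexed by subwords, which $\pi_v$ maps onto the Bruhat cells; since each Bruhat cell $X_w^I$ is a single $B$-orbit, $\pi_v$ is a locally trivial fibration over it, so $(\pi_v)_\ast\ul{k}_{\widetilde{X_v^I}}$ has locally constant cohomology along every stratum, i.e.\ lies in $D_{T,\Lambda}^b(X,k)$.

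The only steps resting on genuine structural input, rather than bookkeeping, are (S2), which uses the Whitney property of the Bruhat stratification, and the covering half of (A2), which uses the inclusion $w^{-1}Bw\subseteq B^-B$ together with the fact that the resulting affine opens actually cover $X$; verifying (R3) also requires a modest check that $\pi_v$ is a stratified map with respect to the two cell decompositions. The assumptions (A1), (S1), (R1) and (R2) are immediate from the explicit combinatorics of Kac--Moody flag varieties and of Bott--Samelson varieties.
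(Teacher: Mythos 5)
Your proof is correct and follows the paper's general strategy: Bott--Samelson resolutions for (R1)--(R3), the standard cell structure for (A1), (S1), and a $T$-stable affine cover for (A2). The one place you take a genuinely different route is (S2). You invoke the parenthetical sufficient condition, namely that the Bruhat stratification is Whitney. The paper instead observes that $D_{T,\Lambda}^b(X,k)\cong D_B^b(X,k)$: a $T$-equivariant complex constructible with respect to $B$-orbits is automatically $B$-equivariant, and the equivariant derived category $D_B^b(X,k)$ is closed under Verdier duality. This sidesteps the need to locate a proof of the Whitney property in the Kac--Moody setting, which is less readily cited than in the finite-dimensional case. Conversely, your explicit cover $\{vC_I^-\cap X\}_{v\in\Lambda}$ for (A2), obtained from the inclusion $w^{-1}Bw\subseteq B^-B$, is a useful elaboration: the paper's remark after (A2) relies on Sumihiro's theorem, which requires $X$ to be normal, whereas in this proposition $X$ is only assumed to be a closed union of Schubert cells; your direct construction of an affine $T$-stable cover avoids any normality hypothesis. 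The remaining points --- identification of the one-dimensional orbits, linearity of the $T$-action on Schubert cells, the linear projective embedding of Bott--Samelson varieties via the Segre map, and $\Lambda$-constructibility of the push-forward via $B$-equivariance --- are exactly the bookkeeping the paper delegates to \cite{Ku}.
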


\begin{proof} The assumptions (A1), (A2), (S1) are standard properties of Kac-Moody
  Schubert varieties (see \cite[Chapter 7]{Ku}) and (S2) follows
  because we have an equivalence $D_{T,\Lambda}^b(X,k) \cong
  D_{B}^b(X,k)$. Given a Schubert variety $\ol{X_w^I} \subset
  X$, let $\pi: \widetilde{X} \to \ol{X^I_w}$ denote a Bott-Samelson resolution
 (see \cite[7.1.3]{Ku}). Then $\widetilde{X}$ is a smooth
  ${T}$-variety with  finitely many ${T}$-fixed points
  which admits a $T$-equivariant closed linear embedding into a
  projective space. Lastly,
  the variety $\widetilde{X}$ is even a $B$-variety, and the map
  $\pi$ is $B$-equivariant. So properties (R1), (R2) and (R3) hold as well.
\end{proof}

We now describe the moment graph of $G/ P_I$. The identification
of $\fh$ with the Lie algebra of $T$ allows us to identify the
lattice of characters $X^\ast(T)$ with a
lattice in $\fh^*$. Moreover, under this identification, all the roots of $\fg(A)$
 lie in $X^\ast(T)$. Let $R \subset X^\ast(T)$ denote the subset
of real roots, and $R^+$ the subset of real positive
roots. Then we have a bijection
\begin{eqnarray*}
R^+ \stackrel{\sim}{\to} & \{ \text{reflections in $W$} \} \\
\alpha \mapsto & s_{\alpha}.
\end{eqnarray*}
The following proposition follows from \cite[Chapter 7]{Ku}:

\begin{proposition} \label{prop:moment graph for G/B}
We have:
\begin{itemize}
\item The $T$-fixed points are in bijection with the set $W/W_I$:
\begin{gather*}
W/W_I \stackrel{\sim}{\to} ( G / P_I)^{T} \\
wW_I \mapsto wP_I.
\end{gather*}
\item There is a one-dimensional $T$-orbit with $xW_I$ and
  $yW_I$ in its closure if and only if $s_{\alpha} xW_I = yW_I$ for
  some reflection $s_{\alpha} \in W$ in
  which case $T$ acts on this orbit with character $\pm
  \alpha$.
\end{itemize}
\end{proposition}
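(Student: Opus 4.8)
The plan is to deduce both assertions from the Bruhat decomposition of $G/P_I$ and from the structure of the rank-one subgroups of $G$ as developed in \cite[Chapter 7]{Ku}; no genuinely new argument is needed, only an assembly of standard facts. For the $T$-fixed points, I would first note that each coset $wP_I$ is $T$-fixed: choosing a representative $n_w\in N_G(T)$ of $w$ and using $T\subset B\subset P_I$ gives $t\cdot n_wP_I=n_w(n_w^{-1}tn_w)P_I=n_wP_I$. Conversely, the Bruhat decomposition $G/P_I=\bigsqcup_{w\in W/W_I}BwP_I/P_I$ exhibits $G/P_I$ as a disjoint union of $T$-stable Schubert cells, each of which, by (S1) (equivalently, by the affine-cell structure in \cite[Chapter 7]{Ku}), is $T$-equivariantly isomorphic to an affine space carrying a linear $T$-action, whose unique $T$-fixed point is $wP_I$. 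Hence $(G/P_I)^T=\{wP_I\mid w\in W/W_I\}$, and $wW_I\mapsto wP_I$ is a bijection since distinct cosets give distinct cells.

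For the one-dimensional orbits the key device is a twist by $N_G(T)$: for $x\in W$ with representative $n$, the left translation $L_n\colon gP_I\mapsto ngP_I$ is an isomorphism of $G/P_I$ which carries the $T$-action on the target to the $T$-action on the source precomposed with $\operatorname{Ad}(n)$. Hence, if $E$ is a one-dimensional $T$-orbit --- its closure lies in a Schubert variety, hence is projective, is smooth by (A1), and carries a dense $T$-orbit (necessarily $\cong\DC^\times$), so $\ol E$ is a smooth complete rational curve, that is $\ol E\cong\DP^1$, with two $T$-fixed points $uP_I,vP_I$ and label $\pm\beta$ --- then $L_n(E)$ is a one-dimensional $T$-orbit with fixed points $xuP_I,xvP_I$ and label $\pm x\beta$, where $x$ acts on $X^\ast(T)$ through $W$. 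Since $\ol E$ is projective it contains a $T$-fixed point, so an appropriate $L_{n^{-1}}$ reduces the classification to the one-dimensional $T$-orbits whose closure contains the base point $eP_I$.

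It remains to identify the latter. The $T$-weights on $T_{eP_I}(G/P_I)=\fg/\fp_I$ are the $-\gamma$ with $\gamma$ a positive root not lying in the root system of the Levi of $P_I$; for each such $\gamma$ that is moreover a \emph{real} root, the root subgroup $U_{-\gamma}$ of the rank-one subgroup attached to $\gamma$ gives a $T$-stable curve $\ol{U_{-\gamma}\cdot eP_I}\cong\DP^1$ containing $eP_I$ and $s_\gamma P_I$, with $T$ acting through $\pm\gamma$; and --- this is the essential input from \cite[Chapter 7]{Ku} --- every one-dimensional $T$-orbit through $eP_I$ arises in this way, imaginary roots contributing none. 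Combining with the reduction above, a one-dimensional $T$-orbit whose closure contains $xP_I$ has second fixed point $xs_\gamma P_I=s_{x\gamma}(xP_I)$ for a real $\gamma\in R^+$ outside the Levi, and label $\pm x\gamma$.

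Putting $\alpha:=x\gamma$ --- which, as $\gamma$ varies and up to sign, runs over exactly the real roots with $s_\alpha xW_I\ne xW_I$ --- yields precisely the assertion: $xW_I$ and $yW_I$ are joined by a one-dimensional $T$-orbit if and only if $yW_I=s_\alpha xW_I$ for a reflection $s_\alpha\in W$, in which case $T$ acts on it with character $\pm\alpha$. The only real obstacle is this input from \cite[Chapter 7]{Ku} --- that the $T$-stable curves are indexed exactly by the real roots, with the stated fixed points and characters --- which rests on the $SL_2$-theory inside $G$ and the affine-cell structure of Schubert cells; everything else is formal.
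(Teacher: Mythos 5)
The paper offers no argument here at all; it simply asserts that the proposition ``follows from \cite[Chapter 7]{Ku}.'' Your proposal fills in what that citation is doing, and it is correct. The fixed-point computation is standard (and you correctly combine $T\subset B\subset P_I$ with $n_w$ normalising $T$, then the cell decomposition for the converse). For the orbits, the reduction by left translation $L_n$ with $n\in N_G(T)$ --- which twists the $T$-action by $\operatorname{Ad}(n)$ and hence transports labels by the $W$-action on $X^\ast(T)$ --- is exactly the right way to reduce to orbits through the base point $eP_I$; your observation that the orbit closure is a smooth complete rational curve, hence $\cong\DP^1$, and so contains a fixed point, closes that reduction cleanly. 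The remaining assertion --- that the $T$-stable curves through $eP_I$ are precisely the closures of $U_{-\gamma}\cdot eP_I\cong\DP^1$ for $\gamma$ a real positive root outside the Levi, with label $\pm\gamma$ and second fixed point $s_\gamma P_I$, imaginary roots contributing nothing --- is the genuine content, and you correctly flag it as the input from the $SL_2$/rank-one theory in \cite[Chapter 7]{Ku}, which is the same source the paper cites. So your route is essentially the paper's (delegate the nontrivial orbit classification to Kumar), but you have made explicit the formal scaffolding, in particular the $N_G(T)$-translation reducing everything to the base point, which the paper leaves unstated.
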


We complete this section by discussing what the arithmetic assumptions
(A3), (A4a) and (A4b) mean in the case of Kac-Moody flag
varieties. First note that the lattice $\DZ R \subset X^\ast(T)$ spanned
by the real roots determines a surjection of algebraic tori
\[
s: T \twoheadrightarrow T'
\]
so that $X^\ast(T') = \DZ R$. The action of $T$ on $G / P_I$ is
trivial on the kernel of $s$ and we obtain an action of $T'$ on
$G/ P_I$. (In the case of a finite dimensional Schubert variety
this corresponds to the fact that one may always choose the adjoint
form of a reductive group in order to construct the flag variety.)
Because real roots are never divisible in $\DZ R = X^\ast(T')$ it follows that
(A4b) (and hence (A3)) is always satisfied for Kac-Moody Schubert
varieties viewed as $T'$-varieties.

The condition (A3) is more subtle. If we fix a field $k$ then
condition (A3) is satisfied if and only if no two distinct  roots in $R^+$
become linearly dependent modulo $k$. One may check that in the finite cases we have to exclude characteristic $2$ in non-simply laced types and characteristic $3$ in type $G_2$.

In the affine case the situation is radically different. Suppose that
$\widehat G$ is the affine Kac-Moody group associated to a semi-simple group
$G$. Recall that there exists a an element $\delta \in \widehat\fh^\ast$ such that
the set of real roots of $\widehat G$ is equal to $\{ \alpha + n\delta \}$
where $\alpha \in \fh^\ast$ is a root of $G$, and $n \in \DZ$. It follows
that condition (A3) is satisfied for $\widehat G / \widehat P$ and any parabolic
subgroup $\widehat P \ne \widehat G$ if and only if $k$ is of characteristic $0$.

However, if one restricts oneself to a fixed a Schubert variety $X \subset
\widehat G / \widehat P$ the GKM-condition for $X$ may yield interesting restrictions on
the characteristic of $k$ (see \cite{Fie3}).

\section{$p$-Smoothness}

In this section we recall the definition and basic properties of the
$p$-smooth locus of a complex algebraic variety $X$. Our main goal is
Theorem \ref{thm:psmoothmaintheorem} for which we need Proposition
\ref{prop:icstalks+psmooth}, where we show that an (a priori weaker)
condition on the stalks of the intersection cohomology complex is
enough to conclude $p$-smoothness.

Throughout this section all varieties will be irreducible and $k$
denotes a ring (assumed to be a field from Sections \ref{sec:ksmoothIC} to \ref{subsec:pT}).
Dimension will always refer to the complex dimension. Given a point $y$ in a variety $Y$ 
 we denote by $i_y : \{y \} \hookrightarrow Y$ its inclusion.

\subsection{Smoothness and $p$-smoothness} 

If $x$ is a smooth point of a variety $X$ of dimension $n$ a simple
calculation (using the long exact sequence of cohomology, excision and
the cohomology of a $2n-1$-sphere) yields
\[
H^d(X, X \setminus \{ x \}, k) =
\begin{cases} k,\quad \text{if $d = 2 \dim X$,} \\
0, \quad \text{otherwise.}
\end{cases} \]
The isomorphism
\begin{equation} \label{eq:closedsup}
H^d(i_x^! \underline{k}_{X}) \cong H^d(X, X \setminus \{ x \}, k)
\end{equation}
motivates the following.

\begin{definition} A variety $X$ is
\emph{$k$-smooth} if, for all $x \in X$, one has an isomorphism
\[
H^d(i_x^! \underline{k}_X) \cong
\begin{cases} k, \quad \text{if $d = 2 \dim X$}, \\
0, \quad \text{otherwise.}
\end{cases} \]
The \emph{$k$-smooth locus} of $X$ is the largest open $k$-smooth
subvariety of $X$. We define \emph{$p$-smooth} (respectively the
\emph{$p$-smooth locus}) to mean $\mathbb{F}_p$-smooth (respectively the
\emph{$\mathbb{F}_p$-smooth locus}).
\end{definition}

\begin{proposition} We have inclusions
\begin{equation*}
\begin{array}{c} \DQ \text{-smooth} \\ \text{locus} \end{array} æ\supset
\begin{array}{c} p \text{-smooth} \\ \text{locus} \end{array} æ\supset
\begin{array}{c} \DZ \text{-smooth} \\ \text{locus} \end{array} æ\supset
\begin{array}{c} \text{smooth} \\ \text{locus} \end{array}.
\end{equation*}
\end{proposition}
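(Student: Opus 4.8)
The plan is to deduce the chain of inclusions from three implications between notions of smoothness of a variety: (i) smooth $\Rightarrow$ $\DZ$-smooth; (ii) $\DZ$-smooth $\Rightarrow$ $\DF_p$-smooth; (iii) $\DF_p$-smooth $\Rightarrow$ $\DQ$-smooth. Granting these, the proposition follows formally. Each of the four loci is an \emph{open} subvariety of $X$, and since $X$ is irreducible every nonempty open subvariety has dimension $\dim X$; moreover $i_x^!$ is a local operation, so that for an open inclusion $U\inj X$ with $x\in U$ one has $H^d(i_x^!\ul{k}_U)\cong H^d(i_x^!\ul{k}_X)$ (this also shows that a union of open $k$-smooth subvarieties is again $k$-smooth, so that the $k$-smooth locus is well defined). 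Hence, for example, the $\DZ$-smooth locus, being $\DZ$-smooth as a variety, is $\DF_p$-smooth by (ii), and is therefore contained in the largest open $\DF_p$-smooth subvariety, namely the $\DF_p$-smooth locus; the other two inclusions follow in the same way from (i) and (iii).

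For (i) I would simply invoke the computation recalled just before the definition of $k$-smoothness, taken with $k=\DZ$: at a smooth point $x$ the group $H^d(X,X\setminus\{x\},\DZ)$ equals $\DZ$ for $d=2\dim X$ and $0$ otherwise, which by \eqref{eq:closedsup} says precisely that the smooth locus is $\DZ$-smooth. For (ii) and (iii) the main tool is the ``change of coefficients'' distinguished triangle arising from the short exact sequence $0\to\ul{\DZ}_X\stackrel{p}{\to}\ul{\DZ}_X\to\ul{\DF_p}_X\to 0$ of constant sheaves: applying $i_x^!$ to the triangle $\ul{\DZ}_X\stackrel{p}{\to}\ul{\DZ}_X\to\ul{\DF_p}_X\stackrel{[1]}{\to}$ and passing to cohomology yields short exact ``universal coefficient'' sequences
\[
0\longrightarrow K^d_{\DZ}\otimes_{\DZ}\DF_p\longrightarrow K^d_{\DF_p}\longrightarrow \operatorname{Tor}^{\DZ}_1(K^{d+1}_{\DZ},\DF_p)\longrightarrow 0,
\]
where I write $K^d_R:=H^d(i_x^!\ul{R}_X)$. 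Two standard facts enter: $i_x^!\ul{\DZ}_X$ has finitely generated cohomology, because $\ul{\DZ}_X$ is constructible (with respect to a Whitney stratification of $X$) and $i_x^!$ preserves bounded constructibility; and $K^d_{\DQ}\cong K^d_{\DZ}\otimes_{\DZ}\DQ$ (again by a universal coefficient argument, $\DQ$ being flat over $\DZ$). If $x$ lies in the $\DZ$-smooth locus, then $K^\bullet_{\DZ}$ is $\DZ$ concentrated in degree $2\dim X$, hence torsion free, so the $\operatorname{Tor}$-terms vanish and $K^d_{\DF_p}=K^d_{\DZ}\otimes\DF_p$ is $\DF_p$ concentrated in degree $2\dim X$: this is (ii). If instead $x$ lies in the $\DF_p$-smooth locus, so that $K^\bullet_{\DF_p}$ is $\DF_p$ concentrated in degree $2\dim X$, then comparing $\DF_p$-dimensions in the displayed sequences (all contributing terms being nonnegative) forces the $p$-primary torsion of every $K^d_{\DZ}$ to vanish, and the free rank of $K^d_{\DZ}$ to be $0$ for $d\ne 2\dim X$ and $1$ for $d=2\dim X$; tensoring with $\DQ$ then yields (iii).

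I expect implication (iii), $\DF_p$-smooth $\Rightarrow$ $\DQ$-smooth, to be the step requiring the most care, being the only one that is not purely formal: the triangle argument by itself only shows that $K^d_{\DZ}$ is $p$-divisible with no $p$-torsion for $d\ne 2\dim X$, and a nonzero such group (e.g.\ $\DZ[1/p]$) would survive rationally, so one genuinely needs the finite generation of $i_x^!\ul{\DZ}_X$ together with the rank/torsion bookkeeping above. Note that this argument deliberately does \emph{not} prove $\DF_p$-smooth $\Rightarrow$ $\DZ$-smooth, since torsion prime to $p$ in $K^d_{\DZ}$ is invisible both mod $p$ and rationally — consistent with the fact, recalled in the introduction, that the $\DF_p$-smooth and rationally smooth loci can genuinely differ.
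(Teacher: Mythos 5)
Your proof is correct and follows essentially the same route as the paper's: both reduce to integral coefficients via the relation $i_x^!\ul{k}_X\cong i_x^!\ul{\DZ}_X\stackrel{L}{\otimes}_\DZ k$ and then read off $k$-smoothness from the integral cohomology $H^\bullet(i_x^!\ul{\DZ}_X)$. The paper phrases this by splitting $i_x^!\ul{\DZ}_X$ using hereditariness of $\DZ$ and then stating the torsion/rank characterisation of $k$-smoothness, whereas you deploy the universal-coefficient exact sequences explicitly and note the finite generation of $H^\bullet(i_x^!\ul{\DZ}_X)$ that the crucial step $\DF_p$-smooth $\Rightarrow\DQ$-smooth actually requires (a point the paper leaves implicit), so your treatment is if anything the more careful of the two.
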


\begin{proof} The fact that the $\mathbb{Z}$-smooth locus contains the
smooth locus follows from the above discussion.
For all rings $k$ one has an isomorphism
\begin{equation} \label{eq:redZ}
i_x^! \underline{k}_X \cong i_x^! \underline{\mathbb{Z}}_X
\stackrel{L}{\otimes}_{\mathbb{Z}} k.
\end{equation}
As the category of $\mathbb{Z}$-modules is hereditary, every object in
$D^b(\{x \}, \mathbb{Z})$ is isomorphic to its cohomology. It then
follows from \eqref{eq:redZ} that, for a field $k$, the condition of
$k$-smoothness is satisfied at $x$ if and only if:
\begin{enumerate}
\item $H^d(i_x^! \underline{\mathbb{Z}}_X)$ is torsion except for $d =
2n$, where the free part is of rank 1,
\item all torsion in $H^\bullet(i_x^! \underline{\mathbb{Z}}_X)$ is prime to
the characteristic of $k$.
\end{enumerate}
The claimed inclusions now follow easily.
\end{proof}

\begin{remark} The above proof shows that, if $k$ is a field, then the
$k$-smooth locus only depends on the characteristic of $k$.
\end{remark}

\subsection{$k$-smoothness and the intersection cohomology complex}
\label{sec:ksmoothIC}
Until Section \ref{subsec:pT} we assume that $k$ is a field. 
Let us denote by $(D^{\le 0}(X,k), D^{\ge 0}(X,k))$ the standard
$t$-structure on $D(X,k)$ with heart $Sh(X,k)$, the abelian category of
sheaves of $k$-vector spaces on $X$. We denote the corresponding truncation
and cohomology functors by $\tau_{\le 0}$, $\tau_{>0}$ and
$\mathcal{H}^d$.

Let us a fix a Whitney stratification  $X = \bigsqcup_{\lambda \in
\Lambda} X_{\lambda}$ and denote for all $\lambda \in \Lambda$ by
$i_{\lambda} \colon X_{\lambda} \hookrightarrow X$ the inclusion. Recall
that the intersection cohomology complex of $X$, $\IC(X,k) \in
D(X,k)$, is uniquely determined by the properties:
\begin{enumerate}
\item $i_{\lambda}^* \IC(X,k) \cong \underline{k}_{X_{\lambda}}$ for
the open stratum $X_{\lambda} \subset X$;
\end{enumerate}
and, for all strata $X_{\lambda}$ of strictly positive codimension,
\begin{enumerate}
\item[(2)] $\mathcal{H}^d (i_{\lambda}^* \IC(X,k)) = 0$ for $d \ge
\codim_X X_{\lambda}$,
\item[(3)] $\mathcal{H}^d (i_{\lambda}^! \IC(X,k)) = 0$ for $d \le
\codim_X X_{\lambda}.$
\end{enumerate}
Note that under this normalisation $\IC(X,k)$ is not Verdier self-dual.
Rather $\mathbb{D} \IC(X,k) \cong \IC(X,k)[ 2 \dim X]$.
Conditions (2) and (3) are equivalent to the conditions
\begin{enumerate}
\item[(2*)] $\mathcal{H}^d(i_x^* \IC(X,k)) = 0$ for $d \ge \codim_X X_{\lambda}$,
\item[(3*)] $\mathcal{H}^d(i_x^! \IC(X,k)) = 0$ for $d \le \dim X + \dim X_{\lambda}$
\end{enumerate}
for all $x \in X_{\lambda}$ and strata $X_{\lambda}$ of strictly
positive codimension.
(This follows from the fact if $y \in Y$ is a smooth point, then one
has an isomorphism $i_y^! \underline{k}_{X_\lambda} \cong i_y^*\underline{k}_{X_\lambda}[ -2 \dim Y]$.)

\begin{proposition}
\label{prop:psmoothconstant}
A variety $X$ is $k$-smooth if and only if $\IC(X,k) \cong \underline{k}_X$.
\end{proposition}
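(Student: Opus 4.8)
The plan is to check directly that the constant sheaf $\underline{k}_X$ satisfies the defining properties of $\IC(X,k)$ exactly when $X$ is $k$-smooth, invoking the uniqueness of the intersection cohomology complex in one direction and its shifted Verdier self-duality in the other. Throughout write $n=\dim X$, and recall that since $X$ is irreducible the Whitney stratification has a unique dense open stratum, which is smooth of dimension $n$.

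First I would prove the implication ``$X$ is $k$-smooth $\Rightarrow \IC(X,k)\cong\underline{k}_X$''. By uniqueness of $\IC(X,k)$ it is enough to verify that $\underline{k}_X$ satisfies properties (1), (2*) and (3*). Property (1) is immediate, as the pullback of a constant sheaf is a constant sheaf. Property (2*) holds because $i_x^*\underline{k}_X=k$ is concentrated in degree $0$, so $\mathcal{H}^d(i_x^*\underline{k}_X)=0$ for all $d\ge\codim_X X_\lambda$ once $\codim_X X_\lambda\ge 1$. For (3*), the $k$-smoothness hypothesis gives $\mathcal{H}^d(i_x^!\underline{k}_X)=0$ whenever $d\ne 2n$; since a stratum $X_\lambda$ of strictly positive codimension satisfies $\dim X+\dim X_\lambda=n+\dim X_\lambda\le 2n-1<2n$, the required vanishing in the range $d\le n+\dim X_\lambda$ follows at once. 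Hence $\underline{k}_X$ meets all the defining conditions and must coincide with $\IC(X,k)$.

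For the converse I would argue by duality. Assume $\IC(X,k)\cong\underline{k}_X$. The (already stated) isomorphism $\DD\IC(X,k)\cong\IC(X,k)[2n]$ then gives $\DD\underline{k}_X\cong\underline{k}_X[2n]$, i.e.\ the dualizing complex $\omega_X$ of $X$ is isomorphic to $\underline{k}_X[2n]$. Hence for every $x\in X$,
\[
i_x^!\underline{k}_X\cong\big(i_x^!\omega_X\big)[-2n]\cong\omega_{\{x\}}[-2n]\cong k[-2n],
\]
using $i_x^!\omega_X\cong\omega_{\{x\}}$ and that the dualizing complex of a point is $k$ in degree $0$ (equivalently, $i_x^!\DD\cong\DD i_x^*$ together with $\DD k\cong k$). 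Taking cohomology yields $H^d(i_x^!\underline{k}_X)=k$ for $d=2n$ and $0$ otherwise, which is precisely the definition of $k$-smoothness.

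I do not expect a serious obstacle here: the argument is bookkeeping with the $\IC$-axioms and the chosen normalization of shifts. The one point worth flagging is that in the converse direction the support and cosupport conditions of $\IC(X,k)$ alone are insufficient — they only force $i_x^!\underline{k}_X$ to vanish in degrees $d\le n+\dim X_\lambda$, leaving the ``middle'' degrees between $n+\dim X_\lambda$ and $2n$ uncontrolled — so one genuinely needs the shifted self-duality $\DD\IC(X,k)\cong\IC(X,k)[2n]$ to deduce $k$-smoothness at every point.
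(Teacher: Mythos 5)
Your proof is correct and takes essentially the same route as the paper's: check that $\underline{k}_X$ satisfies the defining axioms of $\IC(X,k)$ for the forward direction, and use the shifted self-duality $\DD\IC(X,k)\cong\IC(X,k)[2\dim X]$ together with $i_x^!\DD\cong\DD i_x^*$ for the converse. Your phrasing via the dualizing complex $\omega_X$ and $\omega_{\{x\}}$ is just a cosmetic repackaging of the paper's chain of isomorphisms, and your closing remark about why the support/cosupport conditions alone do not suffice is accurate and a useful clarification, but the substance is identical.
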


\begin{proof} If $X$ is $k$-smooth then the constant sheaf  $\underline{k}_X$ satisfies
(1), (2*) and (3*) above and hence $\underline{k}_X \cong \IC(X,k)$. On
the other hand, if $\underline{k}_X \cong \IC(X,k)$ then $\mathbb{D}
\underline{k}_X \cong \underline{k}_X[2 \dim X]$ and
for all $x \in X$ we have
\[ i_x^! \underline{k}_X \cong
i_x^! (\mathbb{D} \underline{k}_X )[-2 \dim X] \cong \mathbb{D} (i_x^*
\underline{k}_X [2 \dim X]) \]
and hence
\[ H^d(i_x^! \underline{k}_X) = H^{-d}(i_x^* \underline{k}_X[ 2\dim X])
= \begin{cases} k, \quad \text{if $d = 2\dim X$,} \\
0, \quad \text{otherwise,} \end{cases}\]
and so $x$ is $k$-smooth. \end{proof}

\subsection{$k$-smoothness and stalks}

Given a morphism $f: X \to Y$ of complex algebraic varieties we write
$^0f_*$ for the non-derived direct image functor. The functor $^0f_*$ is
left $t$-exact with respect to the standard $t$-structure. Given $\CF
\in Sh(X,k)$ we have $^0f_* \CF \cong \tau_{\le 0} f_* \CF$ canonically.

\begin{lemma} Given $\CF \in D^{\ge 0}(X,k)$ and a morphism $f
: X \to Y$ we have a natural isomorphism $\tau_{\le 0} f_* \CF \cong æ{}^0f_*
\tau_{\le 0} \CF$. \end{lemma}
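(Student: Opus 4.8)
The plan is to reduce the statement to the fact recalled just above --- that ${}^0f_*\CG\cong\tau_{\le 0}f_*\CG$ canonically for a genuine sheaf $\CG$ --- together with the left $t$-exactness of the derived functor $f_*$.

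First I would note that, since $\CF\in D^{\ge 0}(X,k)$, the truncation $\tau_{\le 0}\CF$ lies in $D^{\le 0}(X,k)\cap D^{\ge 0}(X,k)$, i.e.\ in the heart $Sh(X,k)$. Applying the recalled canonical isomorphism to the sheaf $\CG:=\tau_{\le 0}\CF$ gives a natural isomorphism
\[
{}^0f_*\tau_{\le 0}\CF\;\cong\;\tau_{\le 0}f_*\tau_{\le 0}\CF .
\]
It therefore suffices to produce a natural isomorphism $\tau_{\le 0}f_*\tau_{\le 0}\CF\cong\tau_{\le 0}f_*\CF$.

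For this I would apply $f_*$ to the canonical truncation triangle $\tau_{\le 0}\CF\to\CF\to\tau_{\ge 1}\CF\stackrel{[1]}{\to}$, obtaining the distinguished triangle $f_*\tau_{\le 0}\CF\to f_*\CF\to f_*\tau_{\ge 1}\CF\stackrel{[1]}{\to}$ in $D^b(Y,k)$. Because $f_*$ is left $t$-exact and $\tau_{\ge 1}\CF\in D^{\ge 1}(X,k)$, we have $f_*\tau_{\ge 1}\CF\in D^{\ge 1}(Y,k)$, so $\mathcal{H}^d(f_*\tau_{\ge 1}\CF)=0$ for all $d\le 0$. Feeding the triangle into the long exact sequence of cohomology sheaves then shows that the morphism $f_*\tau_{\le 0}\CF\to f_*\CF$ induces an isomorphism on $\mathcal{H}^d$ for every $d\le 0$; applying $\tau_{\le 0}$ turns this into an isomorphism $\tau_{\le 0}f_*\tau_{\le 0}\CF\stackrel{\sim}{\to}\tau_{\le 0}f_*\CF$. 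Composing with the isomorphism of the previous paragraph yields the claim, and naturality in $\CF$ is clear since every morphism used (the two truncation maps and the recalled canonical isomorphism) is natural.

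The only point requiring a little care --- and the step I expect to be the main (if minor) obstacle --- is the left $t$-exactness of $f_*$ on complexes, i.e.\ that $f_*D^{\ge 1}(X,k)\subseteq D^{\ge 1}(Y,k)$. This follows from the vanishing $R^pf_*=0$ for $p<0$ by way of the hypercohomology spectral sequence $E_2^{p,q}=R^pf_*\mathcal{H}^q(-)\Rightarrow\mathcal{H}^{p+q}(f_*(-))$. Everything else is a formal manipulation with the standard $t$-structure.
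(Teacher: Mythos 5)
Your argument is correct and follows essentially the same route as the paper: apply $f_*$ to the truncation triangle of $\CF$, invoke left $t$-exactness of $f_*$ to kill the contribution of $\tau_{>0}\CF$ after truncation, and identify $\tau_{\le 0}f_*$ of a genuine sheaf with ${}^0f_*$. The only cosmetic difference is that the paper applies $\tau_{\le 0}$ directly to the distinguished triangle and notes the third term is annihilated, while you pass through the long exact sequence of cohomology sheaves and the spectral-sequence justification of left $t$-exactness; the underlying content is the same.
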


\begin{proof} Applying $f$ to the distinguished triangle
\[ \tau_{\le 0} \CF \to \CF \to \tau_{> 0} \CF \to \]
yields a distinguished triangle
\[ f_* \tau_{\le 0} \CF \to f_* \CF \to f_* \tau_{> 0} \CF \to .\]
Now $f_*$ is left $t$-exact for the $t$-structure $(D^{\le 0}(X,k),
D^{\ge 0}(X,k))$ and so $\tau_{\le 0} f_* \tau_{> 0} \CF = 0$. Hence
if we apply $\tau_{\le 0}$ to the above distinguished triangle we
obtain the required isomorphism
\[
{}^0 f_* \tau_{\le 0} \CF = \tau_{\le 0} f_* \tau_{\le 0} \CF
\stackrel{\sim}{\to} \tau_{\le 0} f_* \CF. \qedhere \]
\end{proof}

\begin{lemma} We have an isomorphism $\tau_{\le 0} \IC(X,k) \cong {}^0
j_* \underline{k}_U$, where $j : U \hookrightarrow X$ denotes the
inclusion of a smooth, open, dense subvariety of $X$. \end{lemma}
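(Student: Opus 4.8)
The plan is to combine the preceding lemma with Deligne's inductive construction of $\IC(X,k)$. First I would reduce to the case where $U$ is the open stratum $U_0$ of the fixed Whitney stratification. This uses two observations: $\IC(X,k)$ commutes with restriction to open subsets, and the intersection cohomology complex (in our normalisation) of a smooth variety is the constant sheaf, so that $\IC(X,k)|_U \cong \underline{k}_U$ and the adjunction unit produces a morphism $\varphi\colon \IC(X,k) \to Rj_*\underline{k}_U$; and, for two smooth open dense subvarieties $U'\subseteq U$ with inclusions $j,j'$, the natural morphism ${}^0j_*\underline{k}_U \to {}^0j'_*\underline{k}_{U'}$ is an isomorphism --- on stalks it is the restriction $H^0(W\cap U,k)\to H^0(W\cap U',k)$ for small connected neighbourhoods $W$, and a connected complex manifold stays connected after deleting a proper analytic subset, so $W\cap U$ and $W\cap U'$ have the same connected components. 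Taking $U'=U\cap U_0$ and using transitivity of adjunction units, $\varphi$ becomes (after $\tau_{\le 0}$) identified with the analogous map for $U_0$, so we may assume $U=U_0$.

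Next I would invoke Deligne's construction. Write $X^{(c)}\subseteq X$ for the closed union of strata of codimension $\ge c$, put $W_c := X\setminus X^{(c+1)}$ and let $a_c\colon W_{c-1}\hookrightarrow W_c$ be the open inclusions, so $W_0 = U_0$ and $W_n = X$ with $n=\dim X$. Then
\[
\IC(X,k)\;\cong\;\bigl(\tau_{\le n-1}R(a_n)_*\bigr)\circ\cdots\circ\bigl(\tau_{\le 1}R(a_2)_*\bigr)\circ\bigl(\tau_{\le 0}R(a_1)_*\bigr)\,\underline{k}_{U_0},
\]
the truncation in the $c$-th step being $\tau_{\le c-1}$ because under our normalisation condition (2) reads $\mathcal{H}^d(i_\lambda^*\IC)=0$ for $d\ge\codim_X X_\lambda$. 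Denote by $F_c$ the complex on $W_c$ produced after the first $c$ steps (so $F_0=\underline{k}_{U_0}$ and $F_n=\IC(X,k)$) and by $b_c\colon U_0\hookrightarrow W_c$ the inclusion; each $F_c\in D^{\ge 0}(W_c,k)$ since $R(a_c)_*$ and the functors $\tau_{\le c-1}$ preserve $D^{\ge 0}$. I would then prove by induction on $c$ that $\tau_{\le 0}F_c\cong {}^0(b_c)_*\underline{k}_{U_0}$. The cases $c=0,1$ are immediate. For $c\ge 2$, since the $c$-th truncation is $\tau_{\le c-1}$ with $c-1\ge 1$, we get $\tau_{\le 0}F_c=\tau_{\le 0}R(a_c)_*F_{c-1}$; applying the preceding lemma to $F_{c-1}\in D^{\ge 0}(W_{c-1},k)$ together with the inductive hypothesis yields $\tau_{\le 0}F_c\cong {}^0(a_c)_*\tau_{\le 0}F_{c-1}\cong {}^0(a_c)_*\,{}^0(b_{c-1})_*\underline{k}_{U_0}\cong {}^0(b_c)_*\underline{k}_{U_0}$, using that non-derived direct images of sheaves compose. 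For $c=n$ this is exactly the assertion, with $b_n=j$.

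I expect the routine but delicate part to be the bookkeeping of truncation degrees in Deligne's construction: one must check that the only step whose truncation could interfere with $\tau_{\le 0}$ is the first one, across the codimension-one strata, and that this truncation is precisely $\tau_{\le 0}$ --- after which the preceding lemma mechanically transports $\tau_{\le 0}$ through all the pushforwards $R(a_c)_*$. The one genuinely geometric ingredient is the connectedness statement used in the reduction to $U=U_0$, which is what guarantees that ${}^0j_*\underline{k}_U$ is independent of the chosen smooth open dense $U$.
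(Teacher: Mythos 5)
Your proof is correct and follows the same route as the paper: express $\IC(X,k)$ via Deligne's construction and then push $\tau_{\le 0}$ through the chain of pushforwards by iterating the preceding lemma. The one cosmetic difference is how you handle the fact that $U$ need not be the open stratum of the given Whitney stratification: the paper simply chooses a Whitney stratification having $U$ as the unique open stratum, while you keep the stratification and show separately (via the correct connectedness-after-removing-a-proper-analytic-subset argument) that ${}^0 j_* \underline{k}_U$ is independent of the choice of smooth open dense $U$; both work, and yours has the mild advantage of not implicitly invoking the stratification-independence of $\IC(X,k)$.
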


\begin{proof} Choose a stratification of $X$ which has $U$ as the only
stratum of dimension $n$, and write $X_i$ for the union of all strata
of codimension $\le i$ (so that $X_0 = U$ and $X_n = X$). We have a
chain of inclusions
\[
X_0 \stackrel{j_0}{\hookrightarrow}
X_1 \stackrel{j_1}{\hookrightarrow}
X_2 \stackrel{j_0}{\hookrightarrow} \dots
\stackrel{j_{n-2}}{\hookrightarrow} X_{n-1}
\stackrel{j_{n-1}}{\hookrightarrow} X_n.
\]
The Deligne construction (see \cite[Proposition 2.1.11]{BBD}) gives an isomorphism
\[
\IC(X,k) \cong
(\tau_{\le n-1} \circ j_{{n-1}*} ) \circ
(\tau_{\le n-1} \circ j_{{n-1}*} ) \circ
\dots
(\tau_{\le 0} \circ j_{{0}*} \underline{k}_U)
\]
Repeatedly applying the above lemma yields
\[
\tau_{\le 0} \IC(X,k) \cong
{}^0 j_{{n-1}*} \circ \dots \circ {}^0 j_{{1}*} \circ {}^0 j_{0*}
\underline{k}_U
\cong {}^0 j_* \underline{k}_U,
\]
as claimed.
\end{proof}

\begin{proposition} \label{prop:icstalks+psmooth}
A variety $X$ is $k$-smooth if and only if $\IC(X,k)_x \cong k$ for all $x
\in X$. \end{proposition}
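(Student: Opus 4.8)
The plan is to reduce everything to Proposition \ref{prop:psmoothconstant}, which states that $X$ is $k$-smooth if and only if $\IC(X,k) \cong \underline{k}_X$. The forward implication is then immediate: if $X$ is $k$-smooth, then $\IC(X,k) \cong \underline{k}_X$, so $\IC(X,k)_x \cong i_x^* \underline{k}_X \cong k$ for every $x \in X$.

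For the converse, assume that the stalk complex $i_x^* \IC(X,k)$ is isomorphic to $k$ in degree $0$ for all $x \in X$. Since the stalk of $\mathcal{H}^d(\IC(X,k))$ at $x$ is $H^d(i_x^* \IC(X,k))$, the hypothesis forces $\mathcal{H}^d(\IC(X,k)) = 0$ for all $d \neq 0$; hence $\IC(X,k)$ is concentrated in degree $0$, and in particular $\IC(X,k) \cong \tau_{\le 0} \IC(X,k)$. By the preceding lemma, $\tau_{\le 0} \IC(X,k) \cong {}^0 j_* \underline{k}_U$, where $j : U \hookrightarrow X$ denotes the inclusion of a smooth, open, dense subvariety. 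Thus we obtain an isomorphism of sheaves $\IC(X,k) \cong {}^0 j_* \underline{k}_U$, and in particular $({}^0 j_* \underline{k}_U)_x \cong i_x^* \IC(X,k) \cong k$ for every $x$.

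It remains to identify ${}^0 j_* \underline{k}_U$ with $\underline{k}_X$. Applying $\tau_{\le 0}$ to the adjunction morphism $\underline{k}_X \to j_* j^* \underline{k}_X = j_* \underline{k}_U$ and using that $j_*$ is left $t$-exact produces a canonical map $a \colon \underline{k}_X \to {}^0 j_* \underline{k}_U$, and I claim it is an isomorphism; it suffices to check this on stalks. Over $U$ the map $a$ is the identity of $\underline{k}_U$. At a point $x \in X \setminus U$, the germ $a_x(1)$ is the class of the constant section $1$ over $V \cap U$ for small connected $V \ni x$; this is nonzero in the stalk because $U$ is dense, so $V \cap U \neq \emptyset$ for every neighbourhood $V$. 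On the other hand $({}^0 j_* \underline{k}_U)_x$ is a one-dimensional $k$-vector space by the previous paragraph, and since $k$ is a field a nonzero $k$-linear map from $k$ into a one-dimensional space is an isomorphism; hence $a_x$ is an isomorphism. Therefore $a$ is an isomorphism, so $\IC(X,k) \cong {}^0 j_* \underline{k}_U \cong \underline{k}_X$, and $X$ is $k$-smooth by Proposition \ref{prop:psmoothconstant}.

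The argument is essentially formal once one has the two preceding lemmas and Proposition \ref{prop:psmoothconstant}; the only point that needs a little care is the reduction $\IC(X,k) \cong {}^0 j_* \underline{k}_U$ (one must verify that the stalk hypothesis genuinely places $\IC(X,k)$ in cohomological degree $0$ before invoking the lemma), and the single substantive input is that over a field a nonzero map $k \to k$ is invertible — which is precisely where the standing assumption that $k$ is a field in this part of the section is used.
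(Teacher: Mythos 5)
Your proof is correct and follows essentially the same route as the paper: invoke the Deligne-construction lemma to identify $\tau_{\le 0}\IC(X,k)$ with ${}^0 j_*\underline{k}_U$, deduce from the stalk hypothesis that $\IC(X,k) \cong \tau_{\le 0}\IC(X,k)$, and then show the adjunction map $\underline{k}_X \to {}^0 j_*\underline{k}_U$ is a stalkwise injection between one-dimensional stalks, hence an isomorphism, reducing to Proposition \ref{prop:psmoothconstant}. You spell out in more detail than the paper both why the hypothesis places $\IC(X,k)$ in degree zero and why the adjunction map is injective on stalks, but the underlying argument is the same.
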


\begin{proof} If $X$ is $k$-smooth then $\IC(X,k) \cong \underline{k}_X$
by Proposition \ref{prop:psmoothconstant} and so $\IC(X,k)_x \cong k$
for all $x \in X$.
It remains to show the converse. Choose an open, dense, smooth
subvariety $U$ of $X$ and let $j : U \hookrightarrow X$ denote its
inclusion.
The adjunction morphism
\[ \ul{k}_{X} \to {}^0j_* j^* \underline{k}_X æ\]
is an injection on stalks, as may easily be checked from the
definition of ${}^0j_*$. (It is an isomorphism if and only if $X$ is
unibranched, however we will not need this fact.) It follows from our assumptions that
$\IC(X,k)$ lies in $D^{\le 0}(X,k)$ and so we have an isomorphism
\[ \tau_{\le 0} \IC(X,k) \stackrel{\sim}{\to} \IC(X,k). \]
By the above lemma we also have an isomorphism $ {}^0 j_* \underline{k}_U\cong \tau_{\le 0}\IC(X,k)$. It follows that all stalks of
${}^0 j_* \underline{k}_U$ are one-dimensional and that we have an
isomorphism
\[
\underline{k}_X \stackrel{\sim}{\to} {}^0 j_* \underline{k}_U
\stackrel{\sim}{\to} \IC(X,k).\]
Our claim now follows from Proposition  \ref{prop:psmoothconstant}. \end{proof}

\subsection{On the $p$-smooth locus of $T$-varieties} \label{subsec:pT}

Now let $X=\bigsqcup_{\lambda\in\Lambda} X_\lambda$ be an irreducible,
complex, stratified $T$-variety, and let $k$ be a field. Assume that
these data satisfy the assumptions (A1)--(A4a/b), (S1), (S2) and
(R1)--(R3) and let $\Omega_k(\CG)$ denote the $k$-smooth locus of the
moment graph $\CG$ of $X$. The following proposition shows that the
$k$-smooth locus of $X$ and of its moment graph agree.

\begin{proposition} All points of a stratum $X_{\lambda}$ belong to the $k$-smooth locus of $X$ if and only if $\lambda \in \Omega_k(\CG)$.
\end{proposition}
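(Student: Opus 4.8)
The plan is to deduce the Proposition from Theorems \ref{thm:Wparity=BM}, \ref{thm:parityexists} and \ref{thm:mgsmoothlocus}, together with the characterisation of $k$-smoothness by $\IC$-stalks in Proposition \ref{prop:icstalks+psmooth}. Write $w\in\Lambda$ for the unique stratum with $X_w$ open and dense (unique since $X$ is irreducible), so that $w$ is the largest vertex of $\CG=\CG_X$ and, by Theorem \ref{thm:Wparity=BM}, $\DW(\CP(w))\cong\SB(w)$.

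First I would reduce to a statement about all strata at once. The $k$-smooth locus $\Omega$ of $X$ is open: by \eqref{eq:closedsup} and Verdier duality the function $x\mapsto\sum_d\dim_k H^d(i_x^!\underline{k}_X)$ is upper semicontinuous, and it equals $1$ precisely at the $k$-smooth points (the top degree $2\dim X$ always contributes a summand). Moreover, by (S1) the cohomology sheaves of $i_\lambda^!\underline{k}_X$ are trivial local systems, so $k$-smoothness is constant along strata; hence $\Omega=X_\CK$ for an open $\CK\subseteq\Lambda$, and ``$X_\lambda$ lies in the $k$-smooth locus'' is equivalent to ``$Y:=X_{\{\ge\lambda\}}$ is $k$-smooth''. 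On the combinatorial side, $\Gamma(\SB(w))=\THypgr(\CP(w))$ by Proposition \ref{prop:globparity}, and this module is self-dual of degree $2\dim X$ by Theorem \ref{thm:parityexists}(3); since (A3) and (A4a) make $(\CG,k)$ a GKM-pair, Theorem \ref{thm:mgsmoothlocus} shows that $\Omega_k(\CG)$ is open in $\Lambda$. Finally, the inductive construction of $\SB(w)$ shows that $\SB(w)^\mu$ depends only on the restriction of $\CG$ to $\{\ge\mu\}$; as $\CG_Y$ is the full subgraph of $\CG$ on $\{\ge\lambda\}$, this gives $\SB_{\CG_Y}(w)^\mu=\SB_{\CG}(w)^\mu$ for all $\mu\ge\lambda$, hence $\lambda\in\Omega_k(\CG)$ iff $\Omega_k(\CG_Y)=\CV(\CG_Y)$. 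Thus it suffices to prove, for $X$ as in the statement: $X$ is $k$-smooth if and only if every stalk of $\CP(w)$ is one-dimensional --- equivalently, by Theorem \ref{thm:Wparity=BM}, if and only if every $\SB(w)^\mu$ is free of rank one, i.e.\ $\Omega_k(\CG)=\CV$. (Here $Y$ inherits the standing hypotheses, with $\CP(w)|_Y$ playing the role of $\CP(w)$ for $Y$.)

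For the implication ``$k$-smooth $\Rightarrow$ stalks one-dimensional'': if $X$ is $k$-smooth then $\IC(X,k)\cong\underline{k}_X$ by Proposition \ref{prop:psmoothconstant}, so $\DD\underline{k}_X\cong\underline{k}_X[2\dim X]$; consequently $i_\lambda^*\underline{k}_X=\underline{k}_{X_\lambda}$ and $i_\lambda^!\underline{k}_X\cong\underline{k}_{X_\lambda}[-2\codim_X X_\lambda]$ for every stratum, so $\underline{k}_X$ is even, hence parity. It is indecomposable and restricts to $\underline{k}_{X_w}$ on $X_w$, so by the uniqueness of indecomposable parity sheaves (Theorem \ref{thm-parityunique}) $\underline{k}_X\cong\CP(w)$, and all stalks of $\CP(w)$ are one-dimensional.

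The converse is the crux, and is where I expect the real work. Assume every stalk of $\CP(w)$ is one-dimensional; note $\CP(w)$ is even (its restriction to $X_w$ being in degree $0$). By Theorem \ref{thm:Wparity=BM}, $\DW(\CP(w))=\SB(w)$ is a Braden--MacPherson sheaf all of whose stalks $\SB(w)^\mu=\THypgr(\CP(w)_{x_\mu})$ are free of rank one, but a priori shifted: write $\SB(w)^\mu\cong S_k[-2n_\mu]$ with $n_w=0$. I would show $n_\mu=0$ for all $\mu$ by downward induction on the order. Let $\mu<w$ with $n_\nu=0$ for $\nu>\mu$. By the Braden--MacPherson construction, $\SB(w)^\mu$ is a projective cover of $\SB(w)^{\delta\mu}\subseteq\bigoplus_{E\in\CE_{\delta\mu}}\SB(w)^E$; by property (2) of Braden--MacPherson sheaves and the inductive hypothesis, $\SB(w)^E=S_k/\alpha(E)S_k$ for every edge $E\colon\mu\to\nu$, so the target is concentrated in non-negative degrees. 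Since $\mu<w$ there is a directed path from $\mu$ to $w$, so $\CE_{\delta\mu}\ne\emptyset$ and the constant tuple $(1)_{\nu>\mu}$ lies in $\Gamma(\{>\mu\},\SB(w))$; its image under $u_\mu$ is the non-zero degree-zero element $(\overline 1)_{E\in\CE_{\delta\mu}}$ of $\SB(w)^{\delta\mu}$. Hence $\SB(w)^{\delta\mu}$ has a minimal generator in degree $0$, and its projective cover $\SB(w)^\mu$, being free of rank one, must be $S_k$; that is $n_\mu=0$. Consequently $\CP(w)$ is concentrated in degree $0$ with all stalks $\cong k$, and it is self-dual of degree $2\dim X$ by Proposition \ref{prop:parityselfdual}; thus $i_\lambda^*\CP(w)=\underline{k}_{X_\lambda}$ (in degree $0<\codim_X X_\lambda$ for $\lambda\ne w$) and $i_\lambda^!\CP(w)\cong\underline{k}_{X_\lambda}[-2\codim_X X_\lambda]$ (in degree $2\codim_X X_\lambda>\codim_X X_\lambda$), so $\CP(w)$ satisfies the defining properties of $\IC(X,k)$. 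Therefore $\CP(w)\cong\IC(X,k)$, all stalks of $\IC(X,k)$ are one-dimensional, and $X$ is $k$-smooth by Proposition \ref{prop:icstalks+psmooth}. Combined with the reductions, this proves the Proposition; the only delicate point is the crux, where one must rule out the a priori shifts in the rank-one stalks of $\CP(w)$, and the projective-cover input of the Braden--MacPherson construction is exactly what forces them to vanish.
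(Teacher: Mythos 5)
Your proposal is correct in its main thrust and, compared to the paper's proof, takes a slightly rearranged route. The paper sets $U$ equal to the $k$-smooth locus and $U'$ equal to the largest open union of strata on which the non-equivariant parity sheaf restricts to the constant sheaf, proves $U=U'$ by comparing $\overline{\CP}_{U'}$ and $\IC(U,k)$ in both directions, and then invokes Theorem \ref{thm:Wparity=BM}. You instead use the openness of both sides (via (S2) on the geometric side and Theorem \ref{thm:mgsmoothlocus} on the combinatorial side) and the fact that the Braden--MacPherson construction only sees $\{\ge\lambda\}$ to replace $X$ by $X_{\{\ge\lambda\}}$, and then prove the resulting ``global'' statement. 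These are essentially the same picture. What you do \emph{additionally}, and what the paper's own proof does not address, is the discrepancy between the definition of $\Omega_k(\CG)$ --- which only asks for $\SB(w)^\mu$ to be free of \emph{ungraded} rank one --- and the condition the $U=U'$ argument produces, namely $\SB(w)^\nu\cong S_k$ \emph{in degree zero}. You correctly identify that one must rule out a nonzero shift $\SB(w)^\mu\cong S_k[-2n_\mu]$, and your downward induction via the projective-cover step of the Braden--MacPherson construction is the right mechanism; this fills a genuine lacuna in the published argument.

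There is, however, a small gap inside your shift-vanishing step: you assert that the constant tuple $(1)_{\nu>\mu}$ lies in $\Gamma(\{>\mu\},\SB(w))$ without justification. This is not automatic, because the maps $\rho_{x,E}$ for $E\colon x\to y$ are not canonical quotients; they are components of a projective cover $d_x$, which is only well defined up to automorphism of $\SB(w)^x$. Thus there is no canonical element ``$1$'' compatible across all stalks, and the compatibility $\rho_{x,E}(1)=\rho_{y,E}(1)$ does not follow from the definitions. The fix is easy and stays within your framework: instead of the constant tuple, pick any edge $E\colon\mu\to\nu$ (which exists since $\mu<w$) and use property (4) of Braden--MacPherson sheaves, which gives a \emph{degree-zero} global section $s$ with $s_\nu$ a generator of $\SB(w)^\nu\cong S_k$. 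Restricting $s$ to $\{\ge\mu\}$ and using the compatibility relation $\rho_{\mu,E}(s_\mu)=\rho_{\nu,E}(s_\nu)\ne 0$ in $S_k/\alpha_E S_k$ shows that $s_\mu$ is a nonzero degree-zero element of $\SB(w)^\mu$, forcing $n_\mu=0$ as in your argument (equivalently, one can argue that $u_\mu(s|_{\{>\mu\}})$ is a nonzero degree-zero element of $\SB(w)^{\delta\mu}$ and invoke the projective cover as you do). With this substitution the proof is complete. Two cosmetic remarks: in the forward implication you should write $\overline{\CP(w)}=\For(\CP(w))$ when comparing with $\underline{k}_X$ and $\IC(X,k)$, since the smoothness discussion is non-equivariant and the uniqueness of indecomposable parity sheaves is needed in its non-equivariant form; and when you invoke Theorem \ref{thm:mgsmoothlocus} for the openness of $\Omega_k(\CG)$, its hypotheses (the GKM-condition and self-duality of $\Gamma(\SB(w))$, via Theorem \ref{thm:parityexists}(3)) are indeed available in the setting of the proposition, as you say.
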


\begin{proof}
Let $U$ denote the $p$-smooth locus of $X$. It is a union of strata by
our assumption (S2). Because we have assumed that $X$ is irreducible
there exists a unique open dense stratum $X_{\lambda} \subset X$. Let
$\CP$ be the corresponding indecomposable parity sheaf normalised so
that its restriction to $X_{\lambda}$ is $\ul{k}_{X_{\lambda}}$.

In the following it will be useful to work with non-equivariant
sheaves. Note that the non-equivariant analogue of Theorem
\ref{thm-parityunique} is valid (see \cite[Theorem 2.12]{JMW2}) and
$\overline{\CP} := \For(\CP)$ is the indecomposable non-equivariant
parity sheaf with support $X$.

Let $U'$ denote the largest open union of strata $X_{\lambda}$ for
which $\overline{\CP}_{X_{\lambda}} \cong \ul{k}_{X_{\lambda}}$. We claim
$U = U'$.

Indeed, if $U'$ denotes this set then $\overline{\CP}_{U'}$ satisfies
the properties (1) and (2) of the IC-complexes and hence also
satisfies (3) because $\DD(\overline{\CP}_{U'}) \cong
\overline{\CP}_{U'}[2 \dim X]$. Hence $\overline{\CP}_{U'} \cong
\IC(U,k)$ and so $U' \subset U$ by Propositions
\ref{prop:psmoothconstant} and \ref{prop:icstalks+psmooth}. On the
other hand, $\IC(U,k) \cong \ul{k}_U$ is certainly indecomposable and
$*$-even. It is even $!$-even because $\DD \IC(U,k) \cong \IC(U,k)[2
\dim X]$. Hence $\CP_U \cong \ul{k}_U$ by the classification of parity
sheaves, together with the fact that the restriction of an
indecomposable parity sheaf to an open union of strata is either zero
or indecomposable (see \cite[Proposition 2.11]{JMW2}).

Now, by Theorem \ref{thm:Wparity=BM}, $\DW(\CP) \cong \SB(\lambda)$
and hence $\overline{\CP}_{X_{\mu}} \cong \ul{k}_{X_{\lambda}}$ if and only if
$\SB(\lambda)^{\mu} \cong S_k$. The proposition then follows by
definition of the $k$-smooth locus of the moment graph of $X$.
\excise{ Because we have assumed that $X$ is irreducible there exists
  a unique open dense stratum $X_{\lambda} \subset X$. Let $\CP :=
  \CP(\lambda)$ be the corresponding indecomposable parity sheaf
  normalised so that its restriction to $X_{\lambda}$ is
  $\ul{k}_{X_{\lambda}}$. By Proposition \ref{prop:parityselfdual} we
  have $\DD \CP \cong \CP[2d]$ where $d := \dim X$. Now set
\[
U := \bigsqcup_{\lambda \in \Omega_k(\CG)} X_{\lambda}
\]
and consider $\CP_{U}$, the restriction of $\CP$ to $U$. We certainly have $\DD \CP_U \cong \CP_U[2d]$. By definition of the smooth locus of the moment graph and Theorem \ref{}, we have $\THypgr( \CP_{x_{\lambda}}) \cong S_k$ for all $\lambda \in \Omega_k(\CG)$.  Hence $\CP_{X_{\lambda}} \cong \ul{k}_{X_{\lambda}}$ for all $\lambda \in \Omega_k(\CG)$.}
\end{proof}

Combining this result with Theorem \ref{thm:mgsmoothlocus} yields:

\begin{theorem} \label{thm:psmoothmaintheorem}
A $T$-fixed point $x_{\mu} \in X_{\mu}$ belongs to the $p$-smooth locus of $X$ if and only if for all $\lambda \ge \mu$ the number of one-dimensional $T$-orbits having $x_{\lambda}$ in their closure is equal to the complex dimension of $X$. Moreover, $X_{\mu}$ belongs to the $k$-smooth locus if and only if its $T$-fixed point $x_{\mu}$ does.
\end{theorem}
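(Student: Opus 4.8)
The plan is to derive the statement by combining the preceding proposition (\emph{all points of $X_\lambda$ lie in the $k$-smooth locus of $X$ if and only if $\lambda\in\Omega_k(\CG)$}) with the computation of $\Omega_k(\CG)$ in Theorem~\ref{thm:mgsmoothlocus}; the content is then to check that the hypotheses of the latter hold and to translate everything back to $X$.

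First I would set up the reduction. Since $X$ is irreducible there is a unique open dense stratum $X_{\lambda_0}$, and $X=\overline{X_{\lambda_0}}$ is proper, being the image of the projective variety $\widetilde{X_{\lambda_0}}$ under the proper surjection $\pi_{\lambda_0}$ of (R1)--(R3). By the proposition identifying the order on the vertices of $\CG:=\CG_X$ with the closure order on strata, $\lambda_0$ is the largest vertex of $\CG$, so $\CG$ carries the indecomposable Braden--MacPherson sheaf $\SB:=\SB(\lambda_0)$ and its $k$-smooth locus $\Omega_k(\CG)$ is defined. Two further remarks finish the set-up: because $X$ is proper, the closure of every one-dimensional $T$-orbit is a $\mathbb{P}^1$ joining two distinct fixed points, so for each $\lambda$ the number of edges of $\CG$ at $\lambda$ equals the number of one-dimensional $T$-orbits having $x_\lambda$ in their closure; and the $k$-smooth locus of $X$ is an open union of strata (by (S2), exactly as in the proof of the preceding proposition), whence $X_\mu$ lies in it if and only if its unique $T$-fixed point $x_\mu$ does. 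This already gives the last sentence of the theorem and reduces the first assertion, via the preceding proposition, to the description of $\Omega_k(\CG)$.

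Next I would verify the two hypotheses of Theorem~\ref{thm:mgsmoothlocus}. That $(\CG,k)$ is a GKM-pair follows from (A3) and (A4a): (A3) gives $\alpha_E\ne 0$ in $X^\ast(T)\otimes_\DZ k$ for every edge $E$, while (A4a)$'$ says that, for each $\alpha\in X^\ast(T)$, deleting the edges $E$ with $k\alpha_E\cap k\alpha=0$ leaves a graph all of whose vertices have at most one edge; applying this to an $\alpha$ in a hypothetical nonzero intersection $k\alpha_E\cap k\alpha_{E'}$, with $E\ne E'$ sharing a common vertex, produces a contradiction, so $k\alpha_E\cap k\alpha_{E'}=0$. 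For the self-duality of $\CB:=\Gamma(\SB)$, I would use that by Theorem~\ref{thm:Wparity=BM} we have $\SB\cong\DW(\CP(\lambda_0))$ for the indecomposable parity sheaf $\CP(\lambda_0)$ with support $X$, hence by Proposition~\ref{prop:globparity} that $\CB=\Gamma(\DW(\CP(\lambda_0)))=\THypgr(\CP(\lambda_0))$. Theorem~\ref{thm:parityexists}(3) then supplies an isomorphism $\Hom^\bullet_{S_k}(\CB,S_k)\cong\CB[2n]$ with $n:=\dim X=\dim X_{\lambda_0}$; since this isomorphism is induced by the $T$-equivariant Verdier duality functor $\DD$, which commutes with restriction to $T$-fixed points and to one-dimensional orbits, it is compatible with the $\rho$-maps of $\DW(\CP(\lambda_0))$ and hence $\CZ_k$-linear, so $\CB$ is self-dual of degree $2n$ in the required sense.

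With both hypotheses in hand, Theorem~\ref{thm:mgsmoothlocus} applied with $l=n$ identifies $\Omega_k(\CG)$ with the set of vertices $x$ such that every $y\ge x$ is contained in exactly $n$ edges of $\CG$. Rewriting this through the dictionary of the first paragraph ($\CV=\Lambda$, edges at $y$ against one-dimensional orbits with $x_y$ in their closure, $n=\dim X$) and feeding it into the preceding proposition yields precisely the stated criterion for $x_\mu$, equivalently $X_\mu$, to lie in the $k$-smooth, and in particular the $\DF_p$-smooth, locus of $X$. I expect the only genuine subtlety to be the last point of the previous paragraph: Theorem~\ref{thm:parityexists}(3) is phrased only for $S_k$-modules, so one must argue that the duality isomorphism respects the moment-graph structure in order to conclude that $\CB$ is self-dual as a $\CZ_k$-module; everything else is bookkeeping on top of results already established.
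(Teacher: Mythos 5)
Your proof follows exactly the route the paper has in mind (the paper gives only the one-line justification ``combining this result with Theorem~\ref{thm:mgsmoothlocus}''), and you correctly identify the two hypotheses of Theorem~\ref{thm:mgsmoothlocus} that need checking. Two remarks. In the GKM-pair verification you must feed (A4a) an $\alpha$ that actually lies in the integral lattice $X^\ast(T)$, whereas a nonzero element of $k\alpha_E\cap k\alpha_{E'}$ lives only in $X^\ast(T)\otimes_\DZ k$; take $\alpha=\alpha_E$ instead, noting that over the field $k$ the hypothesis $k\alpha_E\cap k\alpha_{E'}\ne 0$ forces $k\alpha_E=k\alpha_{E'}$, so both $E$ and $E'$ survive in $\CG_X^{\alpha_E}$ and give their common vertex degree at least two, contradicting (A4a)$^\prime$.

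On the $\CZ_k$-linearity of the duality isomorphism you are right to be cautious, but your sketch does not yet close it. The isomorphism of Theorem~\ref{thm:parityexists}(3) factors as $\Hom^\bullet_{S_k}(\CB,S_k)\cong\THypgr(\DD\CP(\lambda_0))$ (equivariant Poincar\'e duality over the proper $X$) followed by $\THypgr(\DD\CP(\lambda_0))\cong\CB[2n]$. Your ``compatible with the $\rho$-maps'' argument addresses only the second arrow, which is induced by a morphism in $D^b_T(X,k)$ and hence by a morphism of moment-graph sheaves; it says nothing about the first, which does not arise by applying $\DW$ to anything. A clean way to finish: since $X$ has an affine paving by (S1), $H^\bullet_T(X,k)$ is a free $S_k$-module, and Theorem~\ref{theorem-loc2} applied to $\underline{k}_X$ yields an isomorphism $H^\bullet_T(X,k)\stackrel{\sim}{\to}\CZ_k$; both arrows above are $H^\bullet_T(X,k)$-linear (the first by bilinearity of the equivariant duality pairing, the second because it comes from a map of equivariant sheaves), and the coordinatewise $\CZ_k$-action on $\CB$ is precisely the $H^\bullet_T(X,k)$-action under this identification. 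With that adjustment your argument is complete and agrees with what the paper leaves to the reader.
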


\subsection{A freeness result}\label{subsec:freeness}

In this subsection $k$ denotes a complete local ring and $p$ denotes
the characteristic of the residue field of $k$.  Let
$X=\bigsqcup_{\lambda\in\Lambda} X_\lambda$ be an irreducible,
complex, stratified $T$-variety. Assume that 
these data satisfy the assumptions (A1)--(A4a/b), (S1), (S2). We
further assume that there exists an indecomposable parity sheaf $\CP$
corresponding to the unique open stratum $X_{\mu} \subset X$. (For
example, $X$ could be open in a stratified variety
satisfying (R1), (R2) and (R3).) For any $\lambda \in \Lambda$ let
\begin{gather*}
X_{>\lambda} = \bigsqcup_{\gamma > \lambda} X_{\gamma} \quad \text{and} \quad
X_{\ge \lambda} = \bigsqcup_{\gamma \ge \lambda} X_{\gamma}
\end{gather*}

For any $\lambda \in \Lambda$ we can find a
$T$-stable affine neighbourhood $U$ of $x_\lambda$ and a $T$-invariant
affine normal slice $N \subset U$ to the stratum $X_{\lambda}$. The aim of this section
is to show the following result:

\begin{proposition} \label{prop:free}
$\THypgr(\CP_{N \setminus \{ x_\lambda \} })$ is torsion free over $k$.
\end{proposition}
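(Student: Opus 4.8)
Write $i\colon\{x_\lambda\}\hookrightarrow N$ and $j\colon N\setminus\{x_\lambda\}\hookrightarrow N$ for the inclusions. The plan is to work entirely on the slice $N$. First I would check that $N$, stratified by the transverse intersections $N\cap X_\gamma$ for $\gamma\ge\lambda$, again satisfies (A1), (A2), (S1), (S2) — it is connected and affine with attractive fixed point $x_\lambda$, which is then its unique closed stratum — and that transverse base change shows the restriction $\CP_N$ of $\CP$ to $N$ is a parity sheaf on $N$. We may assume $\CP$, hence $\CP_N$, is even.

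Next I would record that $\THypgr(\CP_N)$ and $\THypgr(i^!\CP_N)$ are free $S_k$-modules, hence free $k$-modules. Indeed $i^!\CP_N$ is a finite direct sum of shifted constant sheaves on a point because $\CP_N$ is parity, so $\THypgr(i^!\CP_N)$ is free over $S_k$; and by the attractive Proposition \ref{prop:attr} the morphism $\pi_\ast\CP_N\to i^\ast\CP_N$ is an isomorphism, so $\THypgr(\CP_N)\cong\THypgr(\CP_{x_\lambda})$, which is again a sum of shifted copies of $S_k$ since $\CP$ is parity. Running the argument of Lemma \ref{lemma-ses} on $N$ with the triangle $i_!i^!\CP_N\to\CP_N\to j_\ast j^\ast\CP_N\stackrel{[1]}\to$ then gives a short exact sequence of graded $k$-modules
$$
0\to\THypgr(i^!\CP_N)\to\THypgr(\CP_N)\to\THypgr(\CP_{N\setminus\{x_\lambda\}})\to 0,
$$
whose two outer terms are free over $k$. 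It therefore remains to show that the cokernel $\THypgr(\CP_{N\setminus\{x_\lambda\}})$ is torsion free over $k$; writing $\ol k=k/\fm$ for the residue field, this is equivalent to $\operatorname{Tor}^k_1(\THypgr(\CP_{N\setminus\{x_\lambda\}}),\ol k)=0$, i.e. to the injectivity of $\THypgr(i^!\CP_N)\otimes_k\ol k\to\THypgr(\CP_N)\otimes_k\ol k$.

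To see this I would reduce modulo $\fm$ at the level of sheaves. Since $k$ is a complete local principal ideal domain, $\ol k$ is a perfect complex of $k$-modules, so $-\otimes^L_k\ol k$ commutes with $i^\ast$, $i_!$, $i^!$, $j^\ast$, $j_\ast$ and with $\THypgr$, and carries shifted constant $k$-sheaves to shifted constant $\ol k$-sheaves; hence $\CP_N\otimes^L_k\ol k$ is a parity sheaf on $N$ over $\ol k$. Tensoring the above triangle with $\ol k$ and applying Lemma \ref{lemma-ses} over the field $\ol k$ produces a short exact sequence whose injective first map identifies, using $\THypgr(i^!(\CP_N\otimes^L_k\ol k))\cong\THypgr(i^!\CP_N)\otimes_k\ol k$ and $\THypgr(\CP_N\otimes^L_k\ol k)\cong\THypgr(\CP_N)\otimes_k\ol k$ (valid since these two $k$-modules are free), with exactly the map $\THypgr(i^!\CP_N)\otimes_k\ol k\to\THypgr(\CP_N)\otimes_k\ol k$ above. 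Its injectivity yields the required vanishing of $\operatorname{Tor}^k_1$, and hence the proposition.

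The step I expect to cost the most care is the compatibility of reduction modulo $\fm$ with the equivariant six-functor formalism on the (non locally compact) Borel constructions $N_T$ — in particular that $\CP_N\otimes^L_k\ol k$ is again parity and that $j_\ast$ commutes with $-\otimes^L_k\ol k$ — together with the verification that $N$ and $\CP_N$ meet the hypotheses needed to invoke Lemma \ref{lemma-ses} and Proposition \ref{prop:attr}; the rest of the argument is formal.
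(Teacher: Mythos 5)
Your argument arrives at the same short exact sequence
\[
0 \to \THypgr(i^!\CP_N) \to \THypgr(\CP_N) \to \THypgr(\CP_{N\setminus\{x_\lambda\}}) \to 0
\]
as the paper, but then establishes torsion-freeness of the cokernel by a genuinely different method. The paper identifies, via the normal slice and the moment-graph description obtained in the proof of Theorem~\ref{thm:fullyfaithful}, the outer terms with $\DW(\CP)_\lambda\subset\DW(\CP)^\lambda$, so the cokernel is the image of $d_\lambda$ and hence embeds into $\bigoplus_{E:\lambda\to\gamma}\DW(\CP)^E$; each $\DW(\CP)^E$ is a sum of shifts of $S_k/\alpha_E S_k$, and (A4b) forces each of these to be torsion-free over $k$. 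You instead reduce modulo $\fm$, reprove exactness of the analogous sequence over $\ol k$, and conclude $\operatorname{Tor}^k_1(\THypgr(\CP_{N\setminus\{x_\lambda\}}),\ol k)=0$. Both routes are sound, but with different dependencies. Your route needs $\CP_N$ (and its reduction) to be a parity complex on $N$ so that Lemma~\ref{lemma-ses} can be run \emph{on the slice}; this requires checking that the slice stratification of $N$ by the transverse intersections $N\cap X_\gamma$ still satisfies (S1)/(S2) and that $!$-restrictions base-change correctly across a normally nonsingular inclusion. You flag this as the delicate point, and it is the one place your proof differs in substance from what the paper needs: the paper never uses that $\CP_N$ is parity on $N$, only that $i^!\CP_N$ and $\CP_N$ compute the moment-graph costalk and stalk, and it gets injectivity of the first arrow ``for free'' from the containment $\DW(\CP)_\lambda\subset\DW(\CP)^\lambda$. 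What your approach buys is a cleaner final step: you avoid the explicit analysis of the edge modules $\DW(\CP)^E$ entirely, replacing the appeal to (A4b) by a universal-coefficients argument. So the comparison is roughly: the paper trades a heavier use of the moment-graph combinatorics for lighter hypotheses on the slice, while you trade a heavier slice verification for a more formal homological conclusion.
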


Of course this result has no content if $k$ is a
field. However it seems to be quite useful if $k$ is, for example,
the $p$-adic integers. Before turning to the proof of this result we state a corollary, which
is of central importance to \cite{JW}:

\begin{corollary}
If $ X_{>\lambda} $ is $p$-smooth then $H_T^{\bullet}(N\setminus \{ x_\lambda \}, k)$ is a free
  $k$-module.
\end{corollary}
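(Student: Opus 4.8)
We may assume $\lambda\neq\mu$, since otherwise $X_{>\lambda}=\emptyset$ and $N$ is a point, so there is nothing to prove. The plan is to deduce from the hypothesis that $\CP_{N\setminus\{x_\lambda\}}\cong\ul{k}_{N\setminus\{x_\lambda\}}$; granting this, $H_T^\bullet(N\setminus\{x_\lambda\},k)=\THypgr(\CP_{N\setminus\{x_\lambda\}})$, which is torsion free over $k$ by Proposition~\ref{prop:free}, and the conclusion then follows from an elementary fact about finitely generated modules over the principal ideal domain $k$.

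The first thing I would check is that $N\setminus\{x_\lambda\}$ is a locally closed, $T$-stable subvariety of $X_{>\lambda}$. The set $X_{\geq\lambda}$ is open in $X$, being the complement of the finite union of the closed, $T$-stable sets $\ol{X_\gamma}$ with $\gamma\not\geq\lambda$; hence by Sumihiro's theorem the affine $T$-stable neighbourhood $U$ of $x_\lambda$ may be chosen inside $X_{\geq\lambda}$, so that $N\subset X_{\geq\lambda}$ and therefore $N\setminus\{x_\lambda\}\subset X_{>\lambda}$.

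The main step is to show that if $X_{>\lambda}$ is $p$-smooth then $\CP_{X_{>\lambda}}\cong\ul{k}_{X_{>\lambda}}$. First, since $k$ is a complete local principal ideal domain whose residue field has characteristic $p$, the computation recalled in the proof of the proposition comparing the $\DQ$-, $p$- and $\DZ$-smooth loci (via $i_x^!\ul{k}_X\cong i_x^!\ul{\DZ}_X\stackrel{L}{\otimes}_{\DZ}k$) shows that a variety is $k$-smooth if and only if it is $p$-smooth; thus $X_{>\lambda}$ is $k$-smooth. Consequently $i_x^!\ul{k}_{X_{>\lambda}}\cong k[-2\dim X]$ for every $x$, and using the local triviality of $i_\gamma^!\ul{k}_{X_{>\lambda}}$ along each stratum $X_\gamma$ together with (S1) one obtains $i_\gamma^!\ul{k}_{X_{>\lambda}}\cong\ul{k}_{X_\gamma}[-2\codim_{X_{>\lambda}}X_\gamma]$, which is concentrated in an even degree. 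Hence $\ul{k}_{X_{>\lambda}}$ is both $*$-even and $!$-even; it is indecomposable ($X_{>\lambda}$ is connected and $k$ is local, so its endomorphism ring is $k$), and its restriction to the open stratum $X_\mu$ is $\ul{k}_{X_\mu}$. On the other hand, $\CP_{X_{>\lambda}}$ is the restriction of the indecomposable parity sheaf $\CP=\CP(\mu)$ to the open union of strata $X_{>\lambda}$: it is parity, it is non-zero since its support contains $X_\mu$, hence indecomposable \cite[Proposition~2.11]{JMW2}, and its restriction to $X_\mu$ is likewise $\ul{k}_{X_\mu}$. The uniqueness part of Theorem~\ref{thm-parityunique} now yields $\CP_{X_{>\lambda}}\cong\ul{k}_{X_{>\lambda}}$, and restricting along $N\setminus\{x_\lambda\}\subset X_{>\lambda}$ gives $\CP_{N\setminus\{x_\lambda\}}\cong\ul{k}_{N\setminus\{x_\lambda\}}$.

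It then only remains to combine the pieces: $H_T^\bullet(N\setminus\{x_\lambda\},k)=\THypgr(\CP_{N\setminus\{x_\lambda\}})$ is torsion free over $k$ by Proposition~\ref{prop:free}, and it is finitely generated over $S_k$ because $N\setminus\{x_\lambda\}$ carries a finite stratification, hence finitely generated over $k$ in each degree; a finitely generated torsion-free module over the principal ideal domain $k$ is free, so $H_T^\bullet(N\setminus\{x_\lambda\},k)$ is a free $k$-module. The step I expect to be the main obstacle is the passage from $p$-smoothness of $X_{>\lambda}$ to the identification $\CP_{X_{>\lambda}}\cong\ul{k}_{X_{>\lambda}}$ — in particular the verification that $\ul{k}_{X_{>\lambda}}$ is $!$-even, which is what makes it a parity sheaf.
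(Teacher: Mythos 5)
Your argument is correct and follows the same route as the paper: deduce from $p$-smoothness that the restriction of the indecomposable parity sheaf $\CP$ to the open set $X_{>\lambda}$ is the constant sheaf (the paper phrases this via self-duality rather than directly verifying $*$- and $!$-evenness, but the content and the appeal to indecomposability of the restriction via \cite[Prop.~2.11]{JMW2} are the same), restrict further to $N\setminus\{x_\lambda\}$, and invoke Proposition~\ref{prop:free}. You spell out three points the paper leaves implicit --- that $N\setminus\{x_\lambda\}\subset X_{>\lambda}$, that $p$-smoothness of $X_{>\lambda}$ upgrades to $k$-smoothness for the complete local principal ideal domain $k$ with residue characteristic $p$, and that torsion-free plus degreewise finite generation over the PID $k$ yields freeness --- but these are filling in details rather than taking a different route.
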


\begin{proof}
  If $X_{> \lambda }$ is $p$-smooth then the constant sheaf with
  coefficients in $k$ is self-dual and hence parity. Hence the restriction of $\CP$ to
  $X_{> \lambda}$ is isomorphic to the constant sheaf (cf.  \cite[Proposition 2.11]{JMW2}). The result then
  follows from Proposition \ref{prop:free}.
\end{proof}

\begin{proof}[Proof of Proposition \ref{prop:free}]
Consider the Cartesian diagram:
\[
\xymatrix{
N \ar[r]& U \\
\{ x_\lambda \} \ar[u]^{{i}}  \ar[r]& X_{\lambda} \ar[u]^{\tilde
  i}
}
\]
Without loss of generality we may assume that $X_{\lambda}$ is a
closed stratum in $X$.  In this case we have seen in the course of the proof of Theorem
\ref{thm:fullyfaithful} that we have an
isomorphism $\THypgr(\tilde{i}^! \CP) \cong
\mathbb{W}(\CP)_\lambda$. Moreover, because $N \hookrightarrow U$ is a
normally non-singular inclusion we have
\[
\THypgr(i^! \CP_N) = \THypgr(\tilde{i}^! \CP) \cong
\mathbb{W}(\CP)_\lambda.
\]
One the other hand, by the attractive proposition, we have $\THypgr(\CP_N) =
\THypgr(\CP_{x_{\lambda}}) = \mathbb{W}(\CP)^\lambda$.

Now consider the open-closed decomposition:
\[
\{x_\lambda \} \stackrel{i}{\hookrightarrow} N
\stackrel{j}{\hookleftarrow} N \setminus \{ x_{\lambda} \} .
\]
This leads to a distinguished triangle
\[
i_!i^! \CP \to \CP_U \to j_*j^* \CP \stackrel{[1]}{\to}.
\]
Taking hypercohomology and using the above observations we conclude
that we have an exact sequence
\[
0 \to \mathbb{W}(\CP)_\lambda \to \mathbb{W}(\CP)^\lambda \to \THypgr(\CP_{N
  \setminus \{ x_{\lambda} \} }) \to 0
\]
where the first map is the inclusion. It follows that we have an embedding
\[
\THypgr(\CP_{N\setminus \{x_\lambda\} })  \hookrightarrow \bigoplus_{E : \lambda \to
  \gamma}  \mathbb{W}(\CP)_E
\]
Now each $\mathbb{W}(\CP)_E$ is isomorphic to a direct sum of shifts of $S/(\alpha_E)$.
By assumption (A4b) no character $\alpha_E$ is $p$-divisible in $X(T)$ and
hence each $S/(\alpha_E)$ is torsion free over $k$. It follows that
$\THypgr(\CP_{N\setminus \{x_\lambda \} })$ is torsion free over $k$. \end{proof}

\section{Representations of reductive algebraic groups} Let $G$ be a simple reductive algebraic group over $\overline{\DF}_p$ and let $\Rep G$ denote the category of rational representations of $G$. It is a fundamental problem in representation theory to determine the characters of the simple and tilting modules in $\Rep G$. 
For simple modules there exists a conjecture, due to Lusztig, in the
case that the characteristic $p$ is larger than the Coxeter number $h$
associated to $G$. For tilting modules there is no general
conjecture. Schur-Weyl duality can be used to show that knowledge of the characters of tilting
modules for $G = GL_n(\overline{\DF}_p)$ implies dimension formula for
the simple modules for $S_m$ for $m \le n$ in characteristic $p$.

We want to explain how the above results allow one to reinterpret these two basic problems using the geometry of certain Schubert varieties in the (complex) affine Grassmannian associated with the Langlands dual group. To this end let $T \subset B \subset G$ denote a maximal torus and Borel subgroup of $G$ respectively. Let $X^*(T)$ denote the character lattice and $X^+(T)$ denote the subset of dominant weights. Then $X^+(T)$ parametrises both the simple and tilting modules in $\Rep G$.

\subsection{Tilting modules}
Let $G^{\vee}_{\DC}$ denote the complex Langlands dual group of $G$,
$G_{\DC}^{\vee}((t))$ its loop group, $\widehat{T^\vee_{\DC}}=T^\vee_\DC\times\DC^\times$ the extended
torus and $\Gr^\vee :=G^{\vee}_{\DC}((t)) / G^{\vee}_{\DC}[[t]] $ the
corresponding affine Grassmannian. Then $X^+(T)$ also parametrises the
$G^{\vee}_{\DC}[[t]]$-orbits on $\Gr^\vee$ and $\Gr^\vee$ satisfies our
assumptions when viewed with the action of $\widehat{T_{\DC}^\vee}$
(indeed, the closures of $G^{\vee}_{\DC}[[t]]$-orbits are examples of
Kac-Moody Schubert varieties).  Recall that the geometric Satake equivalence
\cite{MV} establishes a tensor equivalence between the abelian category of rational representations of $G$ and the tensor category of $G^{\vee}_{\DC}[[t]]$-equivariant perverse sheaves on $\Gr^\vee$.

Recall the following two results which are Theorem 5.1 and Corollary
5.8 of \cite{JMW3}:
\begin{enumerate}
\item If  $p > h+1$, then parity sheaves correspond under
  the geometric Satake isomorphism to
  tilting modules. More precisely, the indecomposable parity sheaf
  $\CP(\lambda)$ corresponds to the indecomposable tilting module
  $T(\lambda)$.
\item The rank of $\THypgr(\CP(\lambda)_{\mu})$ is equal to the
  dimension of the $\mu$-weight space of the tilting
  module $T(\lambda)$.
\end{enumerate}
With the above results in mind, it seems natural to expect that the
Braden-MacPherson algorithm can be used to calculate the characters of
tilting modules. There is a problem, however: the moment graph of the
affine Grassmannian satisfies the GKM-condition if and only if $k$ is
of characteristic $0$.

To get around this problem we take $k$ to be the ring $\DZ_p$ of
$p$-adic numbers. For this the GKM-condition is satisfied. Moreover it
is shown \cite[Proposition 2.41]{JMW2} that the graded ranks of the stalks of
parity sheaves depend only on the characterstic of the residue
field. The following theorem then follows from the above discussion
and our main theorem:

\begin{theorem} Suppose that  $p > h +1$ (see \cite{JMW3} for better bounds).
 When conducted with coefficients in the ring $k = \DZ_p$ of $p$-adic numbers, the Braden-MacPhersons algorithm computes the characters of tilting modules. More precisely, for any character $\mu \in X^\ast(T)$, the dimension of the $\mu$-weight space of $T(\lambda)$ is equal to the rank of $\SB(\lambda)^{\mu}$.
\end{theorem}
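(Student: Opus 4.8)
The plan is to combine the geometric Satake correspondence of \cite{JMW3} with the main theorem of this paper, Theorem \ref{thm:Wparity=BM}, carrying out the Braden--MacPherson computation over the complete local principal ideal domain $k=\DZ_p$ rather than over a field of characteristic $p$. First I would fix $\lambda\in X^+(T)$ and pass from the ind-variety $\Gr^\vee$ to a closed union $X$ of finitely many $G^\vee_\DC[[t]]$-orbits whose closure relations contain the orbit indexed by $\lambda$. Since these orbit closures are Kac-Moody Schubert varieties, the proposition of the preceding section shows that $X$, stratified by these orbits and acted on by $\widehat{T^\vee_\DC}$, satisfies (A1), (A2), (S1), (S2) and (R1)--(R3); the existence of the indecomposable parity sheaf $\CP(\lambda)\in D^b_{\widehat{T^\vee_\DC}}(X,\DZ_p)$ then follows from Theorem \ref{thm:parityexists}. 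It remains to check the arithmetic hypotheses (A3), (A4a) and (A4b) for $(X,\DZ_p)$. The edges of the moment graph of $X$ are labelled by real affine roots $\alpha+n\delta$, which lie in the sublattice $\DZ R$ spanned by the real roots and are not divisible there; hence (A4b), and so also (A3), holds. For (A4a) recall that $X^\ast(\widehat{T^\vee_\DC})\otimes_\DZ\DZ_p$ is torsion free, so for edge labels $\alpha_E,\alpha_{E^\prime}$ we have $\DZ_p\alpha_E\cap\DZ_p\alpha_{E^\prime}\neq 0$ precisely when $\alpha_E$ and $\alpha_{E^\prime}$ are proportional over $\DQ$; and two distinct edges of the moment graph of $\Gr^\vee$ sharing a vertex carry non-proportional real affine roots (this is the usual characteristic-zero GKM condition for the affine Grassmannian). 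Thus $(X,\DZ_p)$ is a GKM-pair and all hypotheses of Theorem \ref{thm:Wparity=BM} are satisfied.

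Next I would apply Theorem \ref{thm:Wparity=BM} to obtain $\DW(\CP(\lambda))\cong\SB(\lambda)$ as sheaves on the moment graph of $X$ over $\DZ_p$. By the definition of the functor $\DW$ this yields, for the $\widehat{T^\vee_\DC}$-fixed point $x_\mu$ attached to a weight $\mu\in X^\ast(T)$, an isomorphism
\[
\SB(\lambda)^\mu\;\cong\;\DW(\CP(\lambda))^\mu\;=\;\THypgr(\CP(\lambda)_{x_\mu}).
\]
Because $\CP(\lambda)$ is parity, the stalk $\CP(\lambda)_{x_\mu}=i_{x_\mu}^\ast\CP(\lambda)$ is a direct sum of shifted equivariant constant sheaves, so $\THypgr(\CP(\lambda)_{x_\mu})$ is a graded free $S_{\DZ_p}$-module whose ungraded rank equals the $\DZ_p$-rank of the (non-equivariant) stalk of $\For\CP(\lambda)$ at $x_\mu$. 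By \cite[Proposition 2.41]{JMW2} this rank depends only on the characteristic of the residue field of the coefficient ring; in particular it coincides with the dimension of the corresponding stalk of the indecomposable parity sheaf on $\Gr^\vee$ with coefficients in $\ol\DF_p$.

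Finally I would invoke the geometric Satake results recalled above (Theorem~5.1 and Corollary~5.8 of \cite{JMW3}): with $\ol\DF_p$-coefficients the parity sheaf $\CP(\lambda)$ corresponds under geometric Satake to the indecomposable tilting module $T(\lambda)$, and the rank of $\THypgr(\CP(\lambda)_\mu)$ equals the dimension of the $\mu$-weight space of $T(\lambda)$. Combining this with the previous paragraph gives $\dim T(\lambda)_\mu=\rk_{S_{\DZ_p}}\SB(\lambda)^\mu$, as claimed. The one genuine obstacle is the discrepancy between characteristic $0$ and characteristic $p$ in the GKM-condition: the moment graph of $\Gr^\vee$ fails to be GKM over $\ol\DF_p$, so Theorem \ref{thm:Wparity=BM} cannot be applied directly there; the device of running the algorithm over $\DZ_p$, where the GKM-condition does hold, and then transporting the numerical answer to characteristic $p$ via the residue-field invariance of stalk ranks, is precisely what makes the statement work.
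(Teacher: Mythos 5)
Your proposal matches the paper's own argument in its essential structure and ingredients: restrict to a closed union of finitely many $G^\vee_\DC[[t]]$-orbits, check that the hypotheses of the localisation/parity machinery hold over $\DZ_p$ (rather than over $\ol\DF_p$, where the affine GKM-condition fails), apply Theorem \ref{thm:Wparity=BM} to identify $\DW(\CP(\lambda))$ with $\SB(\lambda)$, use \cite[Proposition 2.41]{JMW2} to transport the stalk ranks from residue characteristic $p$ over $\DZ_p$ to coefficients $\ol\DF_p$, and finally invoke \cite[Theorem 5.1, Corollary 5.8]{JMW3} to translate stalk ranks into tilting weight multiplicities. One small imprecision worth flagging: your verification of (A4b) appeals to non-divisibility of real affine roots \emph{inside the sublattice $\DZ R$}, whereas (A4b) is phrased in terms of divisibility in $X^\ast(T)$ itself; the paper handles this by passing to the quotient torus $T'$ with $X^\ast(T')=\DZ R$ (equivalently, the adjoint form), which you should do explicitly before invoking the non-divisibility of real roots. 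With that caveat the argument is the paper's argument, spelled out in more detail.
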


\subsection{Simple rational characters}
We now  turn to the application of the above results to Lusztig's
conjecture. Let $I^\vee \subset
G^{\vee}_{\DC}((t))$ be the Iwahori subgroup containing $B^\vee$ and let $\Fl^\vee :=
G^{\vee}_{\DC}((t)) / I^\vee$ denote the affine flag variety with its
$\widehat{T_{\DC}^\vee}$-action.

In \cite{Fie2} a certain subcategory $\CI\subset D^b_{\widehat
  T}(\Fl^\vee,k)$ of {\em special equivariant sheaves} was considered and a
functor $\Phi\colon \CI\to \CR$ was defined, where $\CR$ is a category
of projective objects in a category $ \CC$  naturally associated to
the Lie algebra of $G$. For the application to Lusztig's conjecture one needs to consider only objects in  $\CI$ that are
supported on a  certain Schubert variety $X_{res} \subset \Fl^\vee$. 

An intermediate step in the construction of $\Phi$ was a functor from $\CI$ to the category of
Braden--MacPherson sheaves on the moment graph associated to $\Fl^\vee$. It
turns out that $\CI$ is the category of parity sheaves on $\Fl^\vee$ (with
respect to the stratification by Schubert cells). Indeed, the category $\CI$ is generated from the skyscraper sheaf on the point stratum on $\Fl^\vee$ by repeatedly applying the functors $\pi_s^\ast\pi_{s\ast}$ for simple reflections $s$, where  $\pi_s\colon \Fl^\vee\to \Fl^\vee_s$ is the projection onto the partial affine flag variety associated to $s$. Now parity sheaves are preserved by these functors (cf. \cite[Proposition 4.9]{JMW2}). From the results in \cite{Fie2} we can hence deduce that the ranks of the stalks of parity sheaves determine baby Verma multiplicities for projective objects in $\CC$. These multiplicities in turn determine the characters of simple rational representations of $G$. 
Using the results of this paper we can rephrase the above as follows. Given $\lambda \in \Lambda$ let $\IC({\overline X_{\lambda}}, \DZ)$ denote the intersection cohomology complex of ${\overline X_{\lambda}}$ with integral coefficients (cf. \cite{decperv}).

\begin{theorem}
Suppose that the stalks and costalks of the intersection cohomology complex $\IC(\overline{X_{\lambda}}, \DZ)$ are $p$-torsion-free for all strata $X_{\lambda} \subset X_{res}$. Then the characters of the simple modules for $G$ are given by Lusztig's conjecture.
\end{theorem}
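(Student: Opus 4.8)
The plan is to deduce from the torsion-freeness hypothesis that, over $\DF_p$, the intersection cohomology complex of each Schubert variety contained in $X_{res}$ is an indecomposable parity sheaf, so that its stalks have the same graded ranks as in characteristic $0$, and then to invoke the reduction of \cite{Fie2} recalled above. Recall that $\CI$ is the category of $\widehat{T}$-equivariant parity sheaves on $\Fl^\vee$ for the stratification by Schubert cells, and that (using Theorem \ref{thm:Wparity=BM} to identify what the functor $\Phi$ of \cite{Fie2} does with parity sheaves) the graded ranks of the stalks $\THypgr(\CP(\lambda)_{x_\mu})$ with $X_\lambda\subset X_{res}$ determine the baby Verma multiplicities of the projective objects of $\CC$, which in turn determine the simple rational characters of $G$. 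Since every stratum is an affine space carrying only trivial equivariant local systems, $\THypgr(\CP(\lambda)_{x_\mu})$ is free over $S_k$ and its graded ranks equal those of the ordinary stalk of $\overline{\CP(\lambda)}:=\For(\CP(\lambda))$; and in characteristic $0$ the decomposition theorem identifies $\overline{\CP(\lambda)}$ with $\IC(\overline{X_\lambda},\DQ)$, whose stalks are governed by the Kazhdan--Lusztig polynomials of the affine Weyl group. Thus Lusztig's conjecture holds for $p$ precisely when, for every $\lambda$ with $X_\lambda\subset X_{res}$, the graded ranks of the $\DF_p$-stalks of $\overline{\CP(\lambda)}$ coincide with those of its $\DQ$-stalks.

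To establish this coincidence I would argue as follows. By the universal coefficient theorem, the hypothesis that the stalks and costalks of $\IC(\overline{X_\lambda},\DZ)$ are $p$-torsion-free means that reduction modulo $p$ produces no $\operatorname{Tor}$ contribution, so $\IC(\overline{X_\lambda},\DZ)\stackrel{L}{\otimes}_{\DZ}\DF_p$ still satisfies the support and cosupport conditions characterising an intersection cohomology complex and restricts to $\underline{\DF_p}$ on the open stratum; hence $\IC(\overline{X_\lambda},\DZ)\stackrel{L}{\otimes}_{\DZ}\DF_p\cong\IC(\overline{X_\lambda},\DF_p)$ (cf.~\cite{decperv}) and its stalks have the same graded ranks as those of $\IC(\overline{X_\lambda},\DQ)$. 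By the parity vanishing of Kazhdan--Lusztig polynomials of a Coxeter group these ranks are concentrated in even degrees, so $\IC(\overline{X_\lambda},\DF_p)$ is $*$-even; since it is Verdier self-dual up to a shift it is also $!$-even, hence parity. Being an indecomposable parity sheaf that restricts to $\underline{\DF_p}$ on $X_\lambda$, it is identified with $\overline{\CP(\lambda)}$ by the non-equivariant analogue of the classification Theorem \ref{thm-parityunique} (see \cite[Theorem 2.12]{JMW2}). Combining this with the preceding paragraph gives the required equality of graded ranks and hence the conjectured simple characters.

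The hard part will not be any single computation but the careful bookkeeping: one must check that the torsion-freeness hypothesis is genuinely equivalent, stratum by stratum, to the equality of $\DF_p$-intersection cohomology with the $\DF_p$-parity sheaf -- this is exactly where the parity vanishing of Kazhdan--Lusztig polynomials enters, since it is what upgrades "same stalk ranks" to "$*$-even and $!$-even'' -- and that the chain of equivalences of \cite{Fie2} transports the resulting equality of stalk ranks into the numerical form of Lusztig's conjecture. Beyond these cited results and the main theorem of this paper (Theorem \ref{thm:Wparity=BM}, used via the identification of $\CI$ with parity sheaves), no new geometric input is needed.
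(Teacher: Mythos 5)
Your proposal is correct and follows essentially the same route as the paper's own proof: one starts from the identification (cf.\ \cite{Lu2}) of $\IC(\overline{X_\lambda},\DZ)\stackrel{L}{\otimes}\DQ$ with the characteristic-zero parity sheaf, uses the $p$-torsion-freeness of stalks and costalks to conclude that $\IC(\overline{X_\lambda},\DZ)\stackrel{L}{\otimes}\DF_p$ is again the (indecomposable) parity sheaf $\CP_{\DF_p}(\lambda)$, and then invokes \cite{Fie2,Fie3} together with Theorem \ref{thm:Wparity=BM}. Your intermediate passage through $\IC(\overline{X_\lambda},\DF_p)$, the parity vanishing of Kazhdan--Lusztig polynomials, and Verdier self-duality simply unpacks the ``hence'' that the paper leaves implicit behind the citation of \cite{Lu2}.
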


\begin{proof} 
It is known (cf. \cite{Lu2}) that $\IC({\overline X_{\lambda}}, \DZ)
\stackrel{L}{\otimes} \DQ$ is isomorphic to the parity sheaf
$\CP_\DQ(\lambda)$ with coefficients in $\DQ$. Hence $\IC({\overline X_{\lambda}}, \DZ)
\stackrel{L}{\otimes} \DF_p$ is isomorphic to the parity sheaf
$\CP_{\DF_p}(\lambda)$ with coefficients in $\mathbb{F}_p$ if and only if the conditions
of the theorem are met. The theorem then follows  from 
\cite{Fie3, Fie2}  together with our main theorem.
\end{proof}
\def\cprime{$'$} \def\cprime{$'$}

\end{document}